\definecolor{hellmagenta}{rgb}{1,0.75,0.9}
\definecolor{hellcyan}{rgb}{0.75,1,0.9}
\definecolor{hellgelb}{rgb}{1,1,0.8}
\definecolor{colKeys}{rgb}{0,0,1}
\definecolor{colIdentifier}{rgb}{0,0,0}
\definecolor{colComments}{rgb}{1,0,0}
\definecolor{colString}{rgb}{0,0.5,0}
\definecolor{darkyellow}{rgb}{1,0.9,0}
 \numberwithin{equation}{section}
\newtheorem{thm}{Theorem}[section]
\newtheorem{lem}{Lemma}[section]
\newtheorem{cor}{Corollary}[section]
\newtheorem{pro}{Proposition}[section]
\theoremstyle{definition}
\theoremstyle{remark}
\newtheorem{rem}{Remark}[section]
\begin{document}\numberwithin{equation}{section}
\title[Spectral projection operators of the Sub-Laplacian]
{Spectral projection operators of the Sub-Laplacian and Laguerre calculus on non-degenerate nilpotent Lie groups of step two}
\author{Qianqian Kang, Der-Chen Chang  and Wei Wang }
\thanks{The first author is  partially supported by National Nature Science Foundation in China (No. 11801523) }
         \thanks{The second author is partially supported by a McDevitt Endowment Fund at Georgetown University.}
          \thanks{The third author is partially supported by National Nature Science Foundation in China (No. 12371082).}

\address{Department of Mathematics and Statistics, Georgetown University, Washington D.C. 20057, USA
 Graduate Institute of Business Administration, College of Management, Fu Jen Catholic University, Taipei 242, Taiwan, ROC}
\email{chang@georgetown.edu}

  \address{ Department of Mathematics,  Zhejiang International Studies University,  Hangzhou 310023, PR China}
\email{qqkang@zisu.edu.cn}

\address{Department of Mathematics, Zhejiang University, Zhejiang 310027, PR China}
\email{wwang@zju.edu.cn.}

 \begin{abstract}
In this paper, we  introduce the spectral projection operators $\mathbb{P}_m$ on non-degenerate nilpotent Lie groups $\mathcal{N}$ of step two,  associated to the joint spectrum of sub-Laplacian and derivatives in step two. We construct their kernels $P_m(\mathbf{y},\mathbf{t})$ by using  Laguerre calculus and find a simple integral representation formula for $\mathbf{y}\neq 0$.  Then we show the kernels are Lipschitzian  homogeneous functions on $\mathcal{N}\setminus \{\mathbf{0}\}$ by analytic continuation. Moreover, they are shown to be Calder\'{o}n-Zygmund kernels, so that the spectral  projection operator $\mathbb{P}_m$ can be extended to a bounded operator from $L^p(\mathcal{N})$ to itself. We also prove a convergence theorem of the Abel sum $\lim _{R \rightarrow 1^-} \sum_{m=0}^{\infty} R^{m}\mathbb{P}_{m}\phi=\phi$ by  estimating the $L^p(\mathcal{N})$-norms of  $\mathbb{P}_m$. Furthermore, $\mathbb{P}_m$ are mutually orthogonal projection operators and $\sum_{m=0}^{\infty} \mathbb{P}_{m}\phi=\phi$ for $\phi\in L^2(\mathcal{N})$.
  \end{abstract}
   \keywords{nilpotent Lie group of step two,  spectral projection operator, Calder\'{o}n-Zygmund operator, Laguerre calculus.}

\maketitle

\section{Introduction}
The Heisenberg groups $\mathcal{H}_n$ are non-commutative nilpotent Lie groups, with underlying manifold $\Bbb R^{2n+1}$ and the group law
\begin{equation*}
   (\mathbf{x},t)\cdot (\mathbf{x}^{\prime},s)=\bigg(\mathbf{x}+\mathbf{x}^{\prime},t+s-2 \sum_{j=1}^{n}a_j(x_jx_{n+j}^{\prime}-x_{n+j}x_{j}^{\prime})\bigg),
\end{equation*}
where $\mathbf{x}=(x_1,\ldots,x_{2n}), \mathbf{x}^{\prime}=(x^{\prime}_1,\ldots,x^{\prime}_{2n})\in \Bbb R^{2n}$, $a_j\in \Bbb R_{+}, j=1,2,\ldots,n$. Many aspects of harmonic analysis on the Euclidean space have been successfully generalized to the Heisenberg group (cf. \cite{Stein}). But the theory of Fourier transform on the Heisenberg group has not been completed so far (cf. \cite{FS}\cite{Ge}\cite{G1}\cite{G2}\cite{LCC} and reference therein).
The spectral theory of differential operators on the Heisenberg group is also more difficult and complicated than its  Euclidean counterpart.  \par Strichartz \cite{S}  investigated the joint spectral
theory of the sub-Laplacian operators $\Delta_b$ and $iT$ on the Heisenberg group, where  $T=\frac{\partial}{\partial t}$.
Since $\Delta_b$ and $iT$ are essentially self-adjoint strongly commutative operators, there is a well defined joint spectrum, which is the complement of the set of $(\lambda, \mu)\in \Bbb C^2$ where neither $\lambda I-\Delta_b$ or $\mu I-iT$ is invertible. It is the closed subset of the plane called {\it Heisenberg fan}, consisting of the union of rays
\begin{equation*}
  R_{k,\pm 1}=\left\{(\lambda,\mu):  \mu=\frac{\pm \lambda}{n+2k}\right\}
\end{equation*}
for $k\in \mathbb{N}_0$ and the limit ray $R_{\infty}=\{(\lambda,\mu):  \mu=0, \lambda\geq 0\}.$
The associated spectral projection  operators are convolution operators with kernel that can be described explicitly. These kernels are homogeneous functions of degree $-2n-2$ on $\mathcal{H}_{n}\setminus \{\mathbf{0}\}$ and are Calder\'{o}n-Zygmund kernels. Strichartz also gave the spectral decomposition for all $\phi\in L^p(\mathcal{H}_{n})$, $1<p<\infty$.
  Chang-Tie \cite{CT} used a different approach to this problem from the point of view of Laguerre calculus and derived the explicit kernels for the projection operators. They deduced the regularity properties for spectral  projection operators in other function spaces, e.g. Hardy space as well as Morry space.

Recently, there are many works for harmonic analysis on nilpotent Lie group of step two and its applications to other parts of analysis (cf. \cite{CDW}\cite{Pe}\cite{PR}\cite{SW}\cite{wang5}\cite{ww}\cite{xinfengwu} and references therein). For example, as the first spectral projection operator on the Heisenberg group is the distinguished Szeg\"{o} projection, the first spectral projection operator on some nondegenerate nilpotent Lie group of step two is associated to the Szeg\"{o} projection in quaternionic analysis(cf. \cite{SW}\cite{wang5}\cite{ww}). It is necessary to develop the joint spectral theory on this kind of groups for further applications.  A nilpotent Lie group $\mathcal{N}$  of step two is  a   vector space $\mathbb{R}^{{2n}}\times\mathbb{R}^r$ with the group multiplication given by
  \begin{equation} \label{eq:multipler}
  (\mathbf{x},\mathbf{t})\cdot (\mathbf{y} ,\mathbf{s} )=\left(\mathbf{x}+\mathbf{y} ,\mathbf{t} +\mathbf{s}  + 2  B(\mathbf{x},\mathbf{y} )  \right),
  \end{equation}
  where $B: \mathbb{R}^{2n}\times \mathbb{R}^{2n}\rightarrow \mathbb{R}^r$ is a skew-symmetric mapping. The sub-Laplacian on  $\mathcal{N}$  is the differential operator
  \begin{equation}\label{eq:L}
\Delta_b:=-\frac 14\sum_{k=1}^{{2n}} \mathbf{Y}_{k} \mathbf{Y}_{ k},
 \end{equation}
 where $\mathbf{Y}_k$'s  are standard left vector fields on $\mathcal{N}$ (cf. \eqref{eq:Y}). Let
 \begin{equation*}\label{eq:T}
\mathbf{ T}=(\mathbf{T}_1,\ldots,\mathbf{T}_r):=\left(\frac{\partial}{\partial t_1},\ldots,\frac{\partial}{\partial t_r}\right).
 \end{equation*}
 The spectrum of $\Delta_b$ alone is the set of nonnegative numbers
  $\{\lambda \in \Bbb R: \lambda \geq 0\}$ while the spectrum of $i \mathbf{T}$ is the set  $\Bbb R^r$. These operators are commutative, then their joint spectrum of $(\Delta_b, i \mathbf{T})$ is well defined and contains
 \begin{equation}\label{brush}
 \sigma(\Delta_b, i \mathbf{T})=\bigcup_{\mathbf{k}=(k_1,\ldots,k_n) \in\left(\mathbb{Z}_{+}\right)^{n}}\left\{(\lambda, -\tau) \in \Bbb R\times\Bbb R^{r}: \lambda=\sum_{j=1}^n   \mu_j(\tau)  (2k_j+1)\right\},
 \end{equation}
 where $\mu_j(\tau)$'s are defined by Proposition \ref{prop:orthonormal-basis}.

Laguerre calculus on non-degenerate nilpotent Lie group of step two was developed in  \cite{CMW}. It is more complicated compared to the Heisenberg group case, since we need to use $\tau$-coordinates for each $\tau$ in the second step $\Bbb R^r$ to define Laguerre distribution.  Let $\widetilde{\mathscr L}_{\mathbf{k} }^{(\mathbf{0} )}(\mathbf{y}, \tau)$ be the partial Fourier transform of Laguerre distribution on $\mathcal{N}$. Then the function
  \begin{equation}\label{joint-eigen-psi}
 \Psi_{\mathbf{k}, \tau}^{(\lambda)}(\mathbf{y}, \mathbf{t}):=e^{i\mathbf{t}\cdot \tau}\widetilde{\mathscr L}_{\mathbf{k} }^{(\mathbf{0} )}(\mathbf{y}, \tau)
 \end{equation}
 is smooth in $(\mathbf{y}, \mathbf{t})\in \Bbb R^{2n+r}$ for fixed $\tau \in \Bbb R^r$ and is the eigenfunction corresponding to the joint spectrum $(\lambda, -\tau) \in \sigma(\Delta_b, i \mathbf{T})$, that is,
\begin{equation*}\label{joint-eigen}
\begin{aligned}
 &(\lambda \mathbf{I}-\Delta_b) \Psi_{\mathbf{k}, \tau}^{(\lambda)}=\mathbf{0},\\
 & (-\tau_{\beta} \mathbf{I}-i \mathbf{T}_{\beta}) \Psi_{\mathbf{k}, \tau}^{(\lambda)}=\mathbf{0},\,\,\,\,\beta=1,\ldots,r,
 \end{aligned}
 \end{equation*}
 (cf. Proposition \ref{pro:joint-eigen}).
For fixed $m$, the spectral projection operator on $\mathcal{N}$ is defined formally as
 \begin{equation*}\label{spe-op.}
 (\mathbb{P}_{m}\phi)(\mathbf{y}, \mathbf{t}):=\int_{\Bbb R^{r}} \phi\ast\sum_{|\mathbf{k}|=m}\Psi_{\mathbf{k}, \tau}^{(\lambda)}(\mathbf{y}, \mathbf{t})d\tau,
\end{equation*}
where $(\lambda, -\tau) \in \sigma(\Delta_b, i \mathbf{T})$.

To define the operator $\mathbb{P}_{m}$ explicitly, we introduce a kernel in terms of  Laguerre functions
\begin{equation}\label{Qm}
 \begin{aligned}
 	Q_{m}(\mathbf{y}, \tau) :=\sum_{|\mathbf{k}|=m}\widetilde{\mathscr L}_{\mathbf{k} }^{(\mathbf{0} )}(\mathbf{y}, \tau),
 \end{aligned}
 \end{equation}
 which is proved to be integrable in $\tau$ for $\mathbf{y}\neq \mathbf{0}$.  So we can define its inverse   partial Fourier transform as
 \begin{equation}\label{Pm-Qm}
  P_{m}(\mathbf{y}, \mathbf{t}):=\frac{1}{(2 \pi)^{r}} \int_{\Bbb R^r}e^{i \mathbf{t}\cdot \tau} Q_{m}(\mathbf{y}, \tau) d \tau,\,\,\,\,\text{for}\,\,\mathbf{y}\neq \mathbf{0}.
  \end{equation}

On the other hand, we introduce a distribution $\mathscr{P}_m$ on $\mathcal N$ defined by the pair
\begin{equation}\label{def-dis-pm}
\left\langle\mathscr{P}_m,\phi\right\rangle:=\frac{1}{(2\pi)^r}\int_{\Bbb R^{2n+r}}Q_m(\mathbf{y},\tau)\widetilde{\phi}(\mathbf{y},-\tau) d\mathbf{y}d\tau,
\end{equation}
for $\phi\in \mathcal{S}(\mathcal{N})$, the space of Schwartz functions on $\mathcal{N}$, where
$\widetilde{\phi}$ is the partial Fourier transformation of  function $\phi$.
It is proved to be a homogeneous distribution of degree $-Q$, where $Q:=2n+2r$ (cf. Proposition \ref{pm-homo}).
Then the {\it  spectral projection operator} is defined as
\begin{equation*}
(\mathbb{P}_m\phi)(\mathbf{y},\mathbf{t}):=\phi*\mathscr{P}_m (\mathbf{y},\mathbf{t}),
\end{equation*}
for $\phi\in \mathcal{S}(\mathcal{N})$. We can give the integral representation of $P_m(\mathbf{y}, \mathbf{t})$ for $\mathbf{y} \neq \mathbf{0}$.
\begin{pro}\label{pro:pm-expression}
On a non-degenerate nilpotent Lie group of step two $\mathcal{N}$,  $P_m(\mathbf{y}, \mathbf{t})$  has the integral representation:
 \begin{equation}\label{eq:ex-pm}
 \begin{aligned}
  P_{m}(\mathbf{y}, \mathbf{t})
     = &\sum_{j=0}^r C_{m,j}\mathcal{I}_{m,j}(\mathbf{y}, \mathbf{t}), \,\,\text{for}\,\, \mathbf{y}\neq \mathbf{0},
  \end{aligned}\end{equation}
  where
    \begin{equation}\label{eq:ex-I-1}\begin{aligned}
 \mathcal{I}_{m,j}(\mathbf{y}, \mathbf{t})=\int_{S^{r-1}}
  \frac{(\det\mathcal{B}^\tau)^{\frac{1}{2}}\left(\langle\mathcal{B}^\tau\mathbf{y},\mathbf{y}\rangle+i\mathbf{ t} \cdot \tau\right)^{m-j}}
  {\left(\langle\mathcal{B}^\tau\mathbf{y},\mathbf{y}\rangle-i \mathbf{t }\cdot \tau\right)^{m+n+r-j}} d\tau,
  \end{aligned}\end{equation}
 and    $$\mathcal{B}^\tau:=[(B^\tau)^tB^\tau]^{\frac{1}{2}}$$ is a $2n\times 2n$ symmetric matrix, and constants $C_{m,j}$ given by \eqref{mj} depend on $m,j,n,r$.
\end{pro}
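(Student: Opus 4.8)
The plan is to follow the route used for the Heisenberg group in \cite{CT}: express the Laguerre building blocks explicitly, sum them over $|\mathbf k|=m$ by a generating–function identity, and then carry out the inverse partial Fourier transform \eqref{Pm-Qm} in polar coordinates on the step‑two variable $\tau$, the spherical integral over $S^{r-1}$ being the genuinely new feature. First I would recall from the Laguerre calculus on $\mathcal N$ (\cite{CMW}) the shape of $\widetilde{\mathscr L}_{\mathbf k}^{(\mathbf 0)}(\mathbf y,\tau)$: in the $\tau$‑coordinates in which $B^\tau$ is in normal form with symplectic eigenvalues $\mu_1(\tau),\dots,\mu_n(\tau)>0$ (Proposition \ref{prop:orthonormal-basis}), it is, up to a universal constant, the product over $j=1,\dots,n$ of the one–variable Laguerre functions $\mu_j(\tau)e^{-\mu_j(\tau)\rho_j^{2}/2}L^{0}_{k_j}(\mu_j(\tau)\rho_j^{2})$, where $\rho_j^2$ is the $j$‑th symplectic block of $|\mathbf y|^2$; here $\prod_j\mu_j(\tau)=(\det\mathcal B^\tau)^{1/2}$ and $\sum_j\mu_j(\tau)\rho_j^2=\langle\mathcal B^\tau\mathbf y,\mathbf y\rangle$. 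Summing over $|\mathbf k|=m$ in \eqref{Qm} and applying, block by block, the Laguerre generating identity $\sum_{k\ge0}e^{-x/2}L_k^{0}(x)w^{k}=(1-w)^{-1}\exp(-\tfrac{x}{2}\tfrac{1+w}{1-w})$, then extracting the coefficient of $w^m$ by a contour integral on a small circle $|w|=\varepsilon<1$, gives the closed form
\begin{equation*}
Q_m(\mathbf y,\tau)=\frac{c_{n}}{2\pi i}\oint_{|w|=\varepsilon}\frac{(\det\mathcal B^\tau)^{1/2}}{(1-w)^{n}}\exp\!\Big(-\tfrac12\langle\mathcal B^\tau\mathbf y,\mathbf y\rangle\tfrac{1+w}{1-w}\Big)\frac{dw}{w^{m+1}},\qquad\mathbf y\neq\mathbf 0 .
\end{equation*}

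Next I would insert this into \eqref{Pm-Qm} and pass to polar coordinates $\tau=\rho\omega$, $\rho>0$, $\omega\in S^{r-1}$, $d\tau=\rho^{r-1}d\rho\,d\omega$, using the homogeneities $(\det\mathcal B^{\rho\omega})^{1/2}=\rho^{n}(\det\mathcal B^{\omega})^{1/2}$, $\langle\mathcal B^{\rho\omega}\mathbf y,\mathbf y\rangle=\rho\langle\mathcal B^{\omega}\mathbf y,\mathbf y\rangle$ and $\mathbf t\cdot(\rho\omega)=\rho\,\mathbf t\cdot\omega$. By non‑degeneracy $B^\omega$ is invertible for $\omega\in S^{r-1}$, so $\mathcal B^\omega$ is positive definite and $\langle\mathcal B^\omega\mathbf y,\mathbf y\rangle>0$ when $\mathbf y\neq\mathbf 0$; together with $\operatorname{Re}\tfrac{1+w}{1-w}=\tfrac{1-\varepsilon^2}{|1-w|^2}>0$ on $|w|=\varepsilon$ this makes the inner $\rho$‑integral a convergent Gamma integral
\begin{equation*}
\int_0^\infty\rho^{\,n+r-1}\exp\!\Big(-\rho\big[\tfrac12\langle\mathcal B^\omega\mathbf y,\mathbf y\rangle\tfrac{1+w}{1-w}-i\,\mathbf t\cdot\omega\big]\Big)d\rho
=\frac{\Gamma(n+r)}{\big[\tfrac12\langle\mathcal B^\omega\mathbf y,\mathbf y\rangle\tfrac{1+w}{1-w}-i\,\mathbf t\cdot\omega\big]^{\,n+r}},
\end{equation*}
and permits the interchange of $\oint dw$, $\int_{S^{r-1}}d\omega$ and $\int_0^\infty d\rho$ by Fubini, the integrability of $Q_m(\cdot,\tau)$ for $\mathbf y\neq\mathbf 0$ being already known.

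Finally I would clear denominators: with $a=\langle\mathcal B^\omega\mathbf y,\mathbf y\rangle$ and $b=\mathbf t\cdot\omega$,
\begin{equation*}
\frac{1}{(1-w)^{n}}\Big[\tfrac12 a\,\tfrac{1+w}{1-w}-ib\Big]^{-(n+r)}
=\frac{2^{\,n+r}(1-w)^{r}}{\big[(a-2ib)+w(a+2ib)\big]^{\,n+r}};
\end{equation*}
tracking the Laguerre and Plancherel constants so that the brackets take the normalized form $\langle\mathcal B^\omega\mathbf y,\mathbf y\rangle\mp i\,\mathbf t\cdot\omega$, then expanding $(1-w)^{r}$ by the binomial theorem and $[(a-2ib)+w(a+2ib)]^{-(n+r)}$ by the Newton series and reading off the coefficient of $w^{m}$, one gets a finite sum over $j=0,\dots,r$ (terms with $m-j<0$ disappearing since $\binom{n+m+r-j-1}{m-j}=0$), whose $j$‑th term is a constant times $\binom{r}{j}\binom{n+m+r-j-1}{m-j}$ times $\int_{S^{r-1}}(\det\mathcal B^\omega)^{1/2}(\langle\mathcal B^\omega\mathbf y,\mathbf y\rangle+i\,\mathbf t\cdot\omega)^{m-j}(\langle\mathcal B^\omega\mathbf y,\mathbf y\rangle-i\,\mathbf t\cdot\omega)^{-(m+n+r-j)}d\omega$. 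This is exactly $\sum_{j=0}^{r}C_{m,j}\mathcal I_{m,j}(\mathbf y,\mathbf t)$ with $C_{m,j}$ depending only on $m,j,n,r$, i.e. \eqref{eq:ex-pm}–\eqref{eq:ex-I-1}.

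The hard part will be two accounting issues rather than any new idea: writing $\widetilde{\mathscr L}_{\mathbf k}^{(\mathbf 0)}$ with the precise $\tau$‑normalization and carrying all the constants through Steps one–three so that $C_{m,j}$ comes out exactly as in \eqref{mj} (this is where the factors of $2$ above must reconcile with the normalization in \eqref{Qm}); and justifying the Fubini interchanges, since the $\tau$‑integral runs over the non‑compact $\mathbb R^r$ — here one leans on the already–established integrability of $Q_m(\cdot,\tau)$ away from $\mathbf y=\mathbf 0$ together with the uniform positivity $\operatorname{Re}\tfrac{1+w}{1-w}>0$ on $|w|=\varepsilon$.
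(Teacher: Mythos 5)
Your proposal is correct and follows essentially the same route as the paper: sum $\widetilde{\mathscr L}_{\mathbf k}^{(\mathbf 0)}$ over $|\mathbf k|=m$ to get a closed form for $Q_m$, pass to polar coordinates in $\tau$, and evaluate the radial integral as a Gamma/Laplace integral whose expansion produces the finite $j$-sum with the coefficients $C_{m,j}$. The only difference is cosmetic: you use the Laguerre generating function plus a contour integral and a binomial/Newton expansion to extract the coefficient of $w^m$, where the paper invokes the addition formula $\sum_{|\mathbf k|=m}\prod_j L_{k_j}^{(0)}(\sigma_j)=L_m^{(n-1)}\bigl(\sum_j\sigma_j\bigr)$ and the tabulated Laplace transform of $s^{k}L_m^{(k)}(s)$ differentiated $r$ times in $w$ — two equivalent ways of doing the same bookkeeping.
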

This is a very simple formula since Laguerre distributions are defined in terms of $\tau$-coordinates $\mathbf{y}^{\tau}$, while \eqref{eq:ex-pm}-\eqref{eq:ex-I-1} only depend on $\mathbf{y}$, not $\mathbf{y}^{\tau}$.
 $P_m$  can be further continuated to a Lipschitzian function on $\mathcal{N} \backslash \{\mathbf{0}\}$ by analytic continuation (cf. Theorem \ref{th:pm-ex}).
\begin{thm}\label{pf=f+pvf}
Let $\mathcal{N}$ be a non-degenerate nilpotent Lie group of step two.
 The distribution $\mathscr{P}_{m}$ and $p.v. P_{m}$ on  $\mathcal{N}$ satisfy
\begin{equation}\label{eq:pf=f+pvf}
\mathscr{P}_{m}= p.v.P_{m} +C\delta_{\mathbf{0}},
\end{equation}
for some constant $C$, where $\delta_{\mathbf{0}}$ is the delta distribution at $\mathbf{0}$ and for $\phi\in \mathcal{S}(\mathcal{N})$,
\begin{equation}\label{defi-p.v.pm}
\langle  p.v. P_{m}, \phi\rangle=\lim_{\varepsilon\rightarrow 0^+}\int_{\Bbb R^{2n+r}\backslash B(\mathbf{0},\varepsilon)}
\phi(\mathbf{y},\mathbf{t})P_m(\mathbf{y},\mathbf{t})d\mathbf{y}d\mathbf{t},
\end{equation}
where $B(\mathbf{0},\varepsilon):=\{(\mathbf{y},\mathbf{t})\in \mathcal{N}: (|\mathbf{y}|^4+|\mathbf{t}|^2)^{\frac{1}{4}}\leq \varepsilon\}$. $P_{m}$ has mean value zero, that is,
\begin{equation}\label{mu-k=0}
  \int_{\partial B(\mathbf{0},1)}P_m(\mathbf{y},\mathbf{t})d\mathbf{y}d\mathbf{t}=0.
\end{equation}
\end{thm}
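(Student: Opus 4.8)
The plan is to view $\mathscr{P}_{m}$ as a tempered distribution that is homogeneous of the \emph{critical} degree $-Q$ and whose restriction to $\mathcal{N}\setminus\{\mathbf{0}\}$ is the honest locally integrable function $P_{m}$, and then to run the standard argument for homogeneous Calder\'{o}n--Zygmund kernels. Three facts are available to start: by Proposition \ref{pm-homo} the distribution $\mathscr{P}_{m}$ is homogeneous of degree $-Q$ with respect to the anisotropic dilations $D_{R}(\mathbf{y},\mathbf{t})=(R\mathbf{y},R^{2}\mathbf{t})$, i.e. $\langle\mathscr{P}_{m},\phi\circ D_{R}\rangle=\langle\mathscr{P}_{m},\phi\rangle$ for all $R>0$; by the construction of $Q_{m},P_{m}$ in \eqref{Qm}--\eqref{Pm-Qm} together with Proposition \ref{pro:pm-expression} and Theorem \ref{th:pm-ex}, the function $P_{m}$ extends to a Lipschitz (in particular locally bounded) function on $\mathcal{N}\setminus\{\mathbf{0}\}$, homogeneous of degree $-Q$, with which the distribution $\mathscr{P}_{m}$ coincides on $\mathcal{N}\setminus\{\mathbf{0}\}$; and from homogeneity and local boundedness one has $|P_{m}(\mathbf{y},\mathbf{t})|\lesssim\rho(\mathbf{y},\mathbf{t})^{-Q}$, where $\rho(\mathbf{y},\mathbf{t})=(|\mathbf{y}|^{4}+|\mathbf{t}|^{2})^{1/4}$, which is exactly the borderline size for integrability near $\mathbf{0}$.

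I would first prove the vanishing \eqref{mu-k=0} and, simultaneously, the well-definedness of $p.v.\,P_{m}$, by exploiting homogeneity. Fix $\eta\in C_{c}^{\infty}(\mathcal{N})$ which is constant near $\mathbf{0}$ with $\eta(\mathbf{0})=1$, and fix $R>1$. The function $\eta-\eta\circ D_{R}$ is compactly supported and vanishes in a neighbourhood of $\mathbf{0}$, so $\mathscr{P}_{m}$ acts on it by integration against $P_{m}$, and homogeneity of $\mathscr{P}_{m}$ gives
\begin{equation*}
0=\langle\mathscr{P}_{m},\eta\rangle-\langle\mathscr{P}_{m},\eta\circ D_{R}\rangle=\int_{\mathbb{R}^{2n+r}}P_{m}(\mathbf{y},\mathbf{t})\bigl(\eta(\mathbf{y},\mathbf{t})-\eta(D_{R}(\mathbf{y},\mathbf{t}))\bigr)\,d\mathbf{y}\,d\mathbf{t}.
\end{equation*}
Passing to polar coordinates $(\mathbf{y},\mathbf{t})=D_{\rho}(\omega)$ adapted to the dilations, $\omega\in\partial B(\mathbf{0},1)$, with $d\mathbf{y}\,d\mathbf{t}=\rho^{Q-1}\,d\rho\,d\sigma(\omega)$ and $P_{m}(D_{\rho}(\omega))=\rho^{-Q}P_{m}(\omega)$, and using the elementary identity $\int_{0}^{\infty}\bigl(g(\rho)-g(R\rho)\bigr)\rho^{-1}\,d\rho=g(0)\log R$ with $g(\rho)=\eta(D_{\rho}(\omega))$, the right-hand side collapses to $(\log R)\int_{\partial B(\mathbf{0},1)}P_{m}\,d\sigma$; as $\log R\neq0$, this is \eqref{mu-k=0}. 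Granting \eqref{mu-k=0}, the limit in \eqref{defi-p.v.pm} exists for every $\phi\in\mathcal{S}(\mathcal{N})$: split $\phi=\phi(\mathbf{0})\eta+\psi$ with $\psi:=\phi-\phi(\mathbf{0})\eta$ vanishing at $\mathbf{0}$; the integral of $P_{m}$ against $\phi(\mathbf{0})\eta$ over $\{\varepsilon<\rho<1\}$ vanishes identically by \eqref{mu-k=0} and the polar formula, while $|\psi(\mathbf{y},\mathbf{t})|\lesssim\rho(\mathbf{y},\mathbf{t})$ near $\mathbf{0}$ together with $|P_{m}|\lesssim\rho^{-Q}$ renders the remaining contribution absolutely convergent, and $\{\rho\geq1\}$ is controlled by the decay of $\phi$; the same change of variables shows $p.v.\,P_{m}$ is itself homogeneous of degree $-Q$.

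Finally, $\mathscr{P}_{m}-p.v.\,P_{m}$ annihilates every test function supported in $\mathcal{N}\setminus\{\mathbf{0}\}$---on such functions both pairings equal $\int P_{m}\phi$, no regularization being needed for $p.v.\,P_{m}$---so it is a distribution supported at $\{\mathbf{0}\}$, hence a finite linear combination $\sum_{\alpha}c_{\alpha}\partial^{\alpha}\delta_{\mathbf{0}}$. But $\mathscr{P}_{m}$ and $p.v.\,P_{m}$ are both homogeneous of degree $-Q$ while $\partial^{\alpha}\delta_{\mathbf{0}}$ is homogeneous of degree $-Q-w(\alpha)$, where $w(\alpha)\geq0$ is the weighted order of $\partial^{\alpha}$ (each $\partial_{y_{k}}$ counting $1$, each $\partial_{t_{\beta}}$ counting $2$); matching homogeneities forces $c_{\alpha}=0$ whenever $w(\alpha)\geq1$, leaving $\mathscr{P}_{m}-p.v.\,P_{m}=C\delta_{\mathbf{0}}$ with $C=c_{0}$, which is \eqref{eq:pf=f+pvf}. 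The main obstacle is the input used throughout---that the distribution $\mathscr{P}_{m}$ really coincides with the Lipschitz function $P_{m}$ on \emph{all} of $\mathcal{N}\setminus\{\mathbf{0}\}$, including across the punctured subspace $\{\mathbf{y}=\mathbf{0},\ \mathbf{t}\neq\mathbf{0}\}$, where the defining integral \eqref{def-dis-pm} (phrased through the $\tau$-coordinate Laguerre distributions, which are only transparently controlled for $\mathbf{y}\neq\mathbf{0}$) has to be reconciled with the analytic continuation of Theorem \ref{th:pm-ex}; the bookkeeping for the non-isotropic surface measure $d\sigma$ in the polar-coordinate computation is the other routine but slightly delicate point.
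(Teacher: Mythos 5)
Your proof is correct and follows essentially the same route as the paper: both arguments reduce the theorem to the facts that $\mathscr{P}_m$ is a homogeneous distribution of degree $-Q$ (Proposition \ref{pm-homo}) coinciding on $\mathcal{N}\setminus\{\mathbf{0}\}$ with the continuous homogeneous function $P_m$ (Theorem \ref{th:pm-ex}, Proposition \ref{pro:pm-coincide-pm}), and then to the general characterization of critical-degree homogeneous distributions with continuous restriction away from the origin. The only difference is that the paper cites this characterization as a black box (\cite[Proposition 6.13]{FS1982}, restated as Proposition \ref{expre-dis}), whereas you reprove it — via the dilation-difference cutoff, polar coordinates, and the support-plus-homogeneity argument — which is a correct, self-contained rendering of the very argument the paper itself writes out for the variant principal value in Proposition \ref{expre-dis-2}.
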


 \begin{thm}\label{thm:con-Pm-bounded}
 Let $\mathcal{N}$ be a non-degenerate nilpotent Lie group of step two. Then $\mathbb{P}_{m}$ is a Calder\'{o}n-Zygmund  operator and then can be extended to a bounded operator from the $L^p(\mathcal{N})$ space to itself. Moreover, they are  mutually orthogonal project operators in $L^2(\mathcal{N})$, that is,
 \begin{equation*}
   \mathbb{P}_{m_1}\circ \mathbb{P}_{m_2}=\delta_{m_1}^{m_2}\mathbb{P}_{m_1}.
 \end{equation*}
\end{thm}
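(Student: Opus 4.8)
The plan is to establish the two assertions by rather different routes: the Calder\'on--Zygmund/$L^p$ statement will follow from the kernel estimates already in hand, while the idempotency and mutual orthogonality are an algebraic fact coming from Laguerre calculus, supplemented by a symmetry of the kernel.

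For the first part, recall that by Theorem~\ref{th:pm-ex} the function $P_m$ continues to a Lipschitz function on $\mathcal N\setminus\{\mathbf 0\}$, by Proposition~\ref{pm-homo} it is homogeneous of degree $-Q$ with $Q=2n+2r$, and by Theorem~\ref{pf=f+pvf} one has $\mathscr P_m = p.v.\,P_m + C\delta_{\mathbf 0}$ with $\int_{\partial B(\mathbf 0,1)}P_m = 0$. Write $\rho(\mathbf y,\mathbf t) = (|\mathbf y|^4+|\mathbf t|^2)^{1/4}$ for the homogeneous norm and $\delta_s$ for the associated dilations; note $\delta_s$ is a group automorphism, so $(\mathcal N,\rho)$ is a homogeneous group of homogeneous dimension $Q$. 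Homogeneity and continuity of $P_m$ on $\partial B(\mathbf 0,1)$ give the size bound $|P_m(\mathbf y,\mathbf t)|\lesssim \rho(\mathbf y,\mathbf t)^{-Q}$, and the Lipschitz bound together with rescaling gives the H\"ormander-type estimate $|P_m(g^{-1}h)-P_m(h)|\lesssim \rho(g)\,\rho(h)^{-Q-1}$ whenever $\rho(h)\ge 2\rho(g)$. Together with the $L^2$-boundedness of convolution by $p.v.\,P_m$ --- which is the standard theory of convolution singular integrals on homogeneous groups (kernel homogeneous of degree $-Q$, smooth off the origin, mean value zero over the unit sphere), provable by the dyadic decomposition $P_m=\sum_j P_m\chi_{\{2^j\le\rho<2^{j+1}\}}$ into pieces of uniform $L^1$-norm and vanishing integral followed by the Cotlar--Knapp--Stein lemma --- this shows that $\mathbb P_m = \,\cdot\,*\,\mathscr P_m = (\,\cdot\,*\,p.v.\,P_m) + C\,\mathrm{Id}$ is a Calder\'on--Zygmund operator on $(\mathcal N,\rho)$. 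The Calder\'on--Zygmund theory on spaces of homogeneous type then yields the weak-type $(1,1)$ bound, and interpolation and duality give $\|\mathbb P_m\phi\|_{L^p(\mathcal N)}\le C_p\|\phi\|_{L^p(\mathcal N)}$ for $1<p<\infty$.

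For the idempotency: since $\mathbb P_m$ is convolution with $\mathscr P_m$, the composition $\mathbb P_{m_1}\circ\mathbb P_{m_2}$ is convolution with $\mathscr P_{m_1}*\mathscr P_{m_2}$, so it suffices to prove $\mathscr P_{m_1}*\mathscr P_{m_2}=\delta_{m_1}^{m_2}\mathscr P_{m_1}$. Taking the partial Fourier transform in $\mathbf t$ turns group convolution on $\mathcal N$, fibrewise in $\tau$, into the $\tau$-twisted convolution $\times_\tau$ in the $\mathbf y$-variable, so $\widetilde{\mathscr P_{m_1}*\mathscr P_{m_2}}(\cdot,\tau) = Q_{m_1}(\cdot,\tau)\times_\tau Q_{m_2}(\cdot,\tau)$ with $Q_m$ as in~\eqref{Qm}. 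Expanding via~\eqref{Qm} and using the multiplication rule for Laguerre functions of Laguerre calculus (\cite{CMW}), $\widetilde{\mathscr L}_{\mathbf k}^{(\mathbf 0)}(\cdot,\tau)\times_\tau\widetilde{\mathscr L}_{\mathbf k'}^{(\mathbf 0)}(\cdot,\tau)=\delta_{\mathbf k}^{\mathbf k'}\,\widetilde{\mathscr L}_{\mathbf k}^{(\mathbf 0)}(\cdot,\tau)$, the double sum over $|\mathbf k|=m_1$, $|\mathbf k'|=m_2$ collapses: the cross terms vanish, and the result is nonzero only when $m_1=m_2$, equal then to $\sum_{|\mathbf k|=m_1}\widetilde{\mathscr L}_{\mathbf k}^{(\mathbf 0)}(\cdot,\tau)=Q_{m_1}(\cdot,\tau)$. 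Inverting the partial Fourier transform gives $\mathscr P_{m_1}*\mathscr P_{m_2}=\delta_{m_1}^{m_2}\mathscr P_{m_1}$, i.e.\ $\mathbb P_{m_1}\circ\mathbb P_{m_2}=\delta_{m_1}^{m_2}\mathbb P_{m_1}$. For self-adjointness I would read off from~\eqref{eq:ex-pm}--\eqref{eq:ex-I-1} that, since $\mathcal B^\tau$ is real and positive definite, the integrand is even in $\mathbf y$, and $\mathbf t\mapsto -\mathbf t$ interchanges the numerator and denominator factors, $\overline{P_m((\mathbf y,\mathbf t)^{-1})}=\overline{P_m(-\mathbf y,-\mathbf t)}=P_m(\mathbf y,\mathbf t)$; as $\mathcal N$ is unimodular this means $\mathbb P_m^{*}=\mathbb P_m$ on $L^2(\mathcal N)$. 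A self-adjoint idempotent is an orthogonal projection, and $\mathbb P_{m_1}\mathbb P_{m_2}=0$ for $m_1\ne m_2$ is exactly their mutual orthogonality.

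The main obstacle is making the computation of the preceding paragraph rigorous: $Q_m(\mathbf y,\tau)$ is integrable in $\tau$ only for $\mathbf y\ne\mathbf 0$, so $Q_{m_1}(\cdot,\tau)\times_\tau Q_{m_2}(\cdot,\tau)$ is a $\tau$-twisted convolution of functions with non-integrable singularities, which must be interpreted as a singular integral, and the Laguerre multiplication rule has to be available on the appropriate class of distributions. The clean way around this is to run the computation first for the Abel-regularized operators $\sum_m R^{|\mathbf k|}(\cdots)$, which are convolutions with genuinely integrable kernels and for which the twisted-convolution identity is unambiguous, and then to pass to the limit $R\to 1^{-}$ using the $L^p$-estimates of the first part (this also legitimizes composing the operators on $L^2$). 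One must also check that the $C\delta_{\mathbf 0}$ term in $\mathscr P_m=p.v.\,P_m+C\delta_{\mathbf 0}$ causes no discrepancy; it does not, precisely because the identity is stated for the full distributions $\mathscr P_m$, so all normalizations are automatically consistent.
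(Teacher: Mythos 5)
Your proposal is correct and its overall architecture matches the paper's: kernel size and regularity estimates from homogeneity plus the Lipschitz continuation of $P_m$, a cancellation-based $L^2$ bound, Calder\'on--Zygmund theory for $L^p$, and the Laguerre multiplication rule under the partial Fourier transform for orthogonality. Two sub-steps differ. For $L^2$-boundedness you invoke the dyadic decomposition and Cotlar--Knapp--Stein, while the paper uses $\mathbb{P}_m 1=0$ (from \eqref{mu-k=0}) and the $T1$ theorem; both are standard and either works, though note that the regularity estimate you state, $|P_m(\mathbf{g}^{-1}\mathbf{h})-P_m(\mathbf{h})|\lesssim\rho(\mathbf{g})\rho(\mathbf{h})^{-Q-1}$, perturbs the argument on the left, whereas rescaling the Lipschitz bound via left-invariant vector fields naturally controls right perturbations (the paper's condition (ii), via its mean-value Lemma \ref{lem:mean value}); you need the symmetry $P_m(-\mathbf{y},-\mathbf{t})=\overline{P_m(\mathbf{y},\mathbf{t})}$ to get the companion condition, exactly as the paper does. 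For orthogonality, the obstacle you identify is not where you think it is: for each fixed $\tau\neq 0$, $Q_m(\cdot,\tau)$ is a Gaussian times a polynomial in $\mathbf{y}$ by \eqref{Qm-2}, hence Schwartz in $\mathbf{y}$, so the fibrewise identity $Q_{m_1}\ast_\tau Q_{m_2}=\delta_{m_1}^{m_2}Q_{m_1}$ is immediate from Proposition \ref{Proposition 1.2 in 7} with no singular integrals to interpret (the non-integrability is only in $\tau$, which is irrelevant fibrewise). The genuine technical point, which the paper handles and your Abel-regularization detour would also reach more laboriously, is extending $\widetilde{\mathbb{P}_m\phi}(\cdot,\tau)=\widetilde{\phi}(\cdot,\tau)\ast_\tau Q_m(\cdot,\tau)$ from Schwartz $\phi$ to $L^2$, done by density using $\|u\ast_\tau v\|_{L^2}\leq\|u\|_{L^1}\|v\|_{L^2}$ (Proposition \ref{cor:Minkowski}) and the $\tau$-uniform bound on $\|Q_m(\cdot,\tau)\|_{L^1}$ (Lemma \ref{lem:L1,L2}). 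Finally, your self-adjointness argument needs the constant $C$ in $\mathscr{P}_m=p.v.P_m+C\delta_{\mathbf{0}}$ to be real, a point the paper only addresses (by reference to Strichartz) in the proof of Corollary \ref{cor:f-decom}; the composition identity itself, which is all Theorem \ref{thm:con-Pm-bounded} formally asserts, does not require it.
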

Chang-Markina-Wang \cite[Theorem 4.1]{CMW} showed that  for $\phi \in \mathcal{S}(\mathcal{N})$,
\begin{equation*}
\lim _{R \rightarrow 1^-} \sum_{|\mathbf{k}|=0}^{\infty} \phi \ast \mathscr L_{\mathbf{k} }^{(\mathbf{0} )}R^{|\mathbf{k}|}=\phi.
\end{equation*}
We  generalize this convergence theorem to $L^p(\mathcal{N})$ functions.
\begin{thm}\label{thm:f-decom}
 Let $\mathcal{N}$ be a non-degenerate nilpotent Lie group of step two. Then for $\phi \in L^{p}(\mathcal{N}), 1<p<\infty$,
  we have  $\sum_{m=0}^{\infty}R^{m}\mathbb{P}_{m}\phi\in L^p(\mathcal{N})$ and
\begin{equation*}\label{Pf--f}
\lim _{R \rightarrow 1^-} \sum_{m=0}^{\infty} R^{m}\mathbb{P}_{m}\phi=\phi,
\end{equation*}
exists in $L^{p}(\mathcal{N})$.
\end{thm}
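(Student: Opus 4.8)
Write $T_R\phi:=\sum_{m=0}^{\infty}R^m\mathbb{P}_m\phi$. The plan is to reduce the statement to the single uniform estimate
\[
M_p:=\sup_{0\le R<1}\|T_R\|_{L^p(\mathcal{N})\to L^p(\mathcal{N})}<\infty,\qquad 1<p<\infty,
\]
and then to feed in the Schwartz-class convergence already available. First note that for fixed $R<1$ the series defining $T_R\phi$ converges in $L^p$: by Theorem \ref{thm:con-Pm-bounded} each $\mathbb{P}_m$ is a Calder\'{o}n-Zygmund operator, and the Calder\'{o}n-Zygmund constants of the kernels $P_m$ grow at most polynomially in $m$, so $\|\mathbb{P}_m\|_{L^p\to L^p}\lesssim_p(1+m)^N$ and $\sum_m R^m\|\mathbb{P}_m\phi\|_p<\infty$. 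Next, since $\mathscr{P}_m$ has partial Fourier transform $Q_m=\sum_{|\mathbf{k}|=m}\widetilde{\mathscr L}_{\mathbf{k}}^{(\mathbf{0})}$, we have $\mathscr{P}_m=\sum_{|\mathbf{k}|=m}\mathscr L_{\mathbf{k}}^{(\mathbf{0})}$ and hence $T_R\psi=\sum_{\mathbf{k}}R^{|\mathbf{k}|}\psi\ast\mathscr L_{\mathbf{k}}^{(\mathbf{0})}$ for $\psi\in\mathcal{S}(\mathcal{N})$; thus \cite[Theorem 4.1]{CMW} gives $T_R\psi\to\psi$ as $R\to1^-$, and combined with $M_2\le1$ below this holds in $L^2(\mathcal{N})$ (weak $L^2$ convergence together with $\limsup_R\|T_R\psi\|_2\le\|\psi\|_2$ forces strong convergence). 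Interpolating $\|T_R\psi-\psi\|_p$ between $\|T_R\psi-\psi\|_2\to0$ and the uniformly bounded quantity $\|T_R\psi-\psi\|_q$ (for any $q$ with $p$ strictly between $2$ and $q$, using $M_q<\infty$) then gives $T_R\psi\to\psi$ in $L^p$ for $\psi\in\mathcal{S}(\mathcal{N})$ and every $1<p<\infty$ ($p=2$ being the previous line). Granting $M_p<\infty$, for $\phi\in L^p$ and $\varepsilon>0$ pick $\psi\in\mathcal{S}(\mathcal{N})$ with $\|\phi-\psi\|_p<\varepsilon$; then $\|T_R\phi-\phi\|_p\le(M_p+1)\varepsilon+\|T_R\psi-\psi\|_p$, so $T_R\phi\to\phi$ in $L^p$ as $R\to1^-$, and in particular $\sum_{m=0}^{\infty}R^m\mathbb{P}_m\phi\in L^p(\mathcal{N})$.

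The bound $M_2\le1$ is immediate: by the mutual orthogonality of the $\mathbb{P}_m$ (Theorem \ref{thm:con-Pm-bounded}) each finite sum $\sum_{m\le N}\mathbb{P}_m$ is again an orthogonal projection, hence of $L^2\to L^2$ norm at most $1$, so Bessel's inequality $\sum_{m=0}^{\infty}\|\mathbb{P}_m\phi\|_2^2\le\|\phi\|_2^2$ holds, whence $\|T_R\phi\|_2^2=\sum_{m=0}^{\infty}R^{2m}\|\mathbb{P}_m\phi\|_2^2\le\|\phi\|_2^2$ uniformly in $R$.

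The case $p\ne2$ is the heart of the matter, and the plan is to exhibit $T_R$ as convolution with an explicit kernel carrying $R$-uniform Calder\'{o}n-Zygmund estimates. Summing \eqref{Qm} against $R^m$ and applying the Laguerre generating-function identity $\sum_{k\ge0}R^kL_k(x)=(1-R)^{-1}\exp\!\big(-Rx/(1-R)\big)$ in each of the $n$ one-dimensional $\tau$-coordinate factors of $\widetilde{\mathscr L}_{\mathbf{k}}^{(\mathbf{0})}(\mathbf{y},\tau)$ produces a closed form for $G_R(\mathbf{y},\tau):=\sum_{m=0}^{\infty}R^mQ_m(\mathbf{y},\tau)$; inverting the partial Fourier transform in $\tau$ exactly as in the proof of Proposition \ref{pro:pm-expression} then yields an explicit representation of the Abel-summed kernel $K_R(\mathbf{y},\mathbf{t}):=\sum_{m=0}^{\infty}R^m\mathscr{P}_m$ on $\mathcal{N}\setminus\{\mathbf{0}\}$ as an integral over $S^{r-1}$ of a rational expression in $\det\mathcal{B}^\tau$, $\langle\mathcal{B}^\tau\mathbf{y},\mathbf{y}\rangle$, $\mathbf{t}\cdot\tau$ and $R$, whose denominator is a power of an $R$-dependent affine combination of $\langle\mathcal{B}^\tau\mathbf{y},\mathbf{y}\rangle$ and $i\mathbf{t}\cdot\tau$ that degenerates as $R\to1^-$. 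Since this kernel is no longer homogeneous, I would split $K_R=\widetilde K_R+c(R)\eta_R$, where $\eta_R\ge0$ has $L^1$-norm bounded uniformly in $R$ and concentrates at $\mathbf{0}$ as $R\to1^-$ (this piece accounts for the completeness $\sum_m\mathbb{P}_m=\mathrm{Id}$ on $L^2$), while $\widetilde K_R$ satisfies the size bound $|\widetilde K_R(\mathbf{y},\mathbf{t})|\lesssim(|\mathbf{y}|^4+|\mathbf{t}|^2)^{-Q/4}$ and the H\"ormander regularity condition with constants independent of $R\in[0,1)$. Combining these kernel estimates with $M_2\le1$ through the Calder\'{o}n-Zygmund theory on the homogeneous group $\mathcal{N}$, and treating the convolution by $c(R)\eta_R$ with the usual approximate-identity bound, then yields $M_p<\infty$ for all $1<p<\infty$.

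The main obstacle is precisely this $R$-uniform control of $K_R$. The single-$m$ analysis behind Proposition \ref{pro:pm-expression} and Theorem \ref{pf=f+pvf} relied on the exact homogeneity of $P_m$, whereas here the summed kernel degenerates toward $c\,\delta_{\mathbf{0}}$ as $R\to1^-$; consequently the size and smoothness estimates for $\widetilde K_R$ have to be carried out with the parameter $R$ kept explicit throughout and uniformly near the singular set $\mathbf{y}=\mathbf{0}$, and the concentrating piece $c(R)\eta_R$ has to be separated off by hand from the $S^{r-1}$-integral. (Alternatively one may first prove $\sup_N\|\sum_{m\le N}\mathbb{P}_m\|_{L^p\to L^p}<\infty$ and deduce $M_p<\infty$ by Abel summation by parts, but this only relocates the same analytic difficulty to the Dirichlet-type kernel $\sum_{m\le N}\mathscr{P}_m$.)
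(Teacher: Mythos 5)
Your overall skeleton --- a uniform-in-$R$ bound $M_p<\infty$, plus density of $\mathcal{S}(\mathcal{N})$ and the Schwartz-class convergence from \cite[Theorem 4.1]{CMW} --- is exactly the paper's strategy, and your $L^2$ and density steps are fine. But the entire content of the theorem sits in the step you leave open, namely $M_p<\infty$ for $p\neq 2$, and the route you sketch toward it starts from a false premise and would run into trouble. The Abel-summed kernel is \emph{not} inhomogeneous: each $P_m$ is homogeneous of degree $-Q$, so $\sum_m R^m P_m$ is too for every fixed $R<1$; indeed the closed form (Proposition \ref{pro:rmpm}, obtained from the Laguerre generating function exactly as you propose) is
\begin{equation*}
\sum_{m=0}^{\infty}R^mP_{m}(\mathbf{y}, \mathbf{t})
=\frac{(n+r-1)!\,2^{n-r}M^{r}}{ \pi^{n+r}(1+R)^n}
\int_{S^{r-1}}\frac{(\det\mathcal{B}^\tau)^{\frac{1}{2}}}{\left[\langle \mathcal{B}^\tau\mathbf{y},\mathbf{y}\rangle-iM\,\mathbf{t} \cdot \tau\right]^{n+r}}\,d\tau,
\qquad M=\tfrac{1-R}{1+R},
\end{equation*}
which visibly scales like $\lambda^{-Q}$ under $(\mathbf{y},\mathbf{t})\mapsto(\lambda\mathbf{y},\lambda^2\mathbf{t})$. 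What degenerates as $R\to 1^-$ is not the homogeneity but the \emph{pointwise} Calder\'on--Zygmund constants: as $M\to 0$ the singularity spreads out onto the whole subspace $\mathbf{y}=\mathbf{0}$ (near $\mathbf{y}=\mathbf{0}$ the kernel behaves like $M^{-n}$ times a function of $\mathbf{t}$), so your plan of establishing $R$-uniform size and H\"ormander estimates for a piece $\widetilde K_R$, after splitting off an approximate identity $c(R)\eta_R$, is unlikely to close; you acknowledge this yourself as "the main obstacle" without resolving it.

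The idea you are missing is the paper's Lemma \ref{lem:esti} (a step-two analogue of Strichartz's \cite[Lemma 3.1]{S}): for a degree $-Q$ homogeneous kernel $K$ with $\int_{S^{r-1}}\int_{\mathbb{R}^{2n}}K=0$, the method of rotations --- writing $\phi\ast\widehat{p.v.}K$ as a superposition over $(\mathbf{w},\dot{\mathbf{s}})\in\mathbb{R}^{2n}\times S^{r-1}$ of one-dimensional principal-value integrals along the well-curved curve $\Upsilon(u)=(u,u^2+u,u,\ldots,u)$, bounded by Stein--Wainger, plus Minkowski --- controls the $L^p$ operator norm by $c_p\int_{S^{r-1}}\int_{\mathbb{R}^{2n}}|K(\mathbf{y},\mathbf{t})|\,d\mathbf{y}\,d\mathbf{t}$. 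For $K=\sum_m R^mP_m$ this slab $L^1$-norm is computable from the closed form above: the $\mathbf{y}$-integral of the denominator produces a factor $M^{-r}$ that exactly cancels the prefactor $M^{r}$, giving the bound $C/(1+R)^n$ uniformly in $R$ (Proposition \ref{pro:esti-PR}). No pointwise uniform kernel estimates and no approximate-identity decomposition are needed; the delta mass at the origin is already accounted for by the constant $C\delta_{\mathbf{0}}$ in the decomposition \eqref{def-con-pm-2} of $\mathscr{P}_m$ into $\widehat{p.v.}P_m+C\delta_{\mathbf{0}}$, which only contributes a bounded multiple of the identity. Note also that this is precisely why the paper introduces the second principal value $\widehat{p.v.}$ (truncation in $|\mathbf{t}|$ only, \eqref{p.v.f*k-2}) rather than the ball truncation of \eqref{defi-p.v.pm}: it is the one adapted to the rotations argument. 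Two smaller points: your claim that the CZ constants of $P_m$ grow at most polynomially in $m$ (used to get $L^p$ convergence of the series for fixed $R$) is plausible from \eqref{mj} but is itself unproven; and your appeal to \cite[Theorem 4.1]{CMW} should be checked to give convergence in a topology strong enough for your weak-$L^2$ argument.
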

On $L^2(\mathcal{N})$, we have the following decomposition of functions without using Abel sum.
\begin{cor}\label{cor:f-decom}
Let $\mathcal{N}$ be a non-degenerate nilpotent Lie group of step two. Then for  $\phi\in L^2(\mathcal{N})$,   we have
 \begin{equation}\label{L2-decom}
\sum_{m=0}^{\infty} \mathbb{P}_{m} \phi =\phi,
\end{equation}
in $L^2(\mathcal{N})$.
\end{cor}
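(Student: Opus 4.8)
The plan is to obtain \eqref{L2-decom} by upgrading the Abel convergence of Theorem \ref{thm:f-decom} (applied with $p=2$) to genuine convergence of the partial sums, using the mutual orthogonality of the $\mathbb{P}_m$ from Theorem \ref{thm:con-Pm-bounded}. Throughout, write $S_N:=\sum_{m=0}^N\mathbb{P}_m$ and $T_R:=\sum_{m=0}^\infty R^m\mathbb{P}_m$, and fix $\phi\in L^2(\mathcal{N})$.

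\emph{Step 1 (convergence of $S_N\phi$).} Since each $\mathbb{P}_m$ is a self-adjoint idempotent with $\mathbb{P}_{m}\mathbb{P}_{m'}=0$ for $m\neq m'$, the operator $S_N$ is itself a self-adjoint idempotent, hence an orthogonal projection with $\|S_N\|_{L^2\to L^2}\le 1$, and the ranges $\mathbb{P}_m L^2(\mathcal{N})$ are pairwise orthogonal. Therefore, by the Pythagorean identity, $\sum_{m=0}^N\|\mathbb{P}_m\phi\|_{L^2}^2=\|S_N\phi\|_{L^2}^2\le\|\phi\|_{L^2}^2$ (Bessel's inequality) and $\|S_N\phi-S_M\phi\|_{L^2}^2=\sum_{m=M+1}^N\|\mathbb{P}_m\phi\|_{L^2}^2$ for $N>M$. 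Bessel's inequality forces $\sum_{m\ge 0}\|\mathbb{P}_m\phi\|_{L^2}^2<\infty$, so $(S_N\phi)_N$ is Cauchy in $L^2(\mathcal{N})$ and converges to some $\psi\in L^2(\mathcal{N})$.

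\emph{Step 2 (identifying $\psi=\phi$).} For $0<R<1$ the series defining $T_R\phi$ converges absolutely in $L^2(\mathcal{N})$ (each $\|\mathbb{P}_m\phi\|_{L^2}\le\|\phi\|_{L^2}$, and likewise $\|S_m\phi\|_{L^2}\le\|\phi\|_{L^2}$), so Abel summation by parts, using $\mathbb{P}_m\phi=S_m\phi-S_{m-1}\phi$ with $S_{-1}:=0$, gives $T_R\phi=(1-R)\sum_{m=0}^\infty R^m S_m\phi$. Since $S_m\phi\to\psi$ in $L^2(\mathcal{N})$ and $(1-R)\sum_{m\ge 0}R^m=1$, the standard estimate
\[
\Bigl\|(1-R)\sum_{m=0}^\infty R^m S_m\phi-\psi\Bigr\|_{L^2}\le (1-R)\sum_{m=0}^\infty R^m\|S_m\phi-\psi\|_{L^2}
\]
(split the sum at a large index $m_0$ beyond which $\|S_m\phi-\psi\|_{L^2}$ is small) shows $T_R\phi\to\psi$ in $L^2(\mathcal{N})$ as $R\to 1^-$. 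But Theorem \ref{thm:f-decom} gives $T_R\phi\to\phi$; hence $\psi=\phi$, which is exactly \eqref{L2-decom}. As a byproduct one also gets the Plancherel/Parseval identity $\|\phi\|_{L^2}^2=\lim_{N\to\infty}\|S_N\phi\|_{L^2}^2=\sum_{m\ge 0}\|\mathbb{P}_m\phi\|_{L^2}^2$.

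\emph{Main obstacle.} There is no real analytic difficulty here, only bookkeeping: the single point that must be used in full strength is the self-adjointness contained in ``mutually orthogonal projection operators'' in Theorem \ref{thm:con-Pm-bounded}, since this is what makes $S_N$ a norm-one orthogonal projection and thereby supplies both Bessel's inequality and the Pythagorean splitting of $\|S_N\phi-S_M\phi\|_{L^2}^2$; the passage from Abel summability to norm convergence is then routine.
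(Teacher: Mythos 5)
Your proof is correct, and it rests on exactly the same two inputs as the paper's: the mutual orthogonality and self-adjointness of the $\mathbb{P}_m$ from Theorem \ref{thm:con-Pm-bounded} (which gives Bessel's inequality and the Cauchy property of the partial sums), and the Abel convergence $\sum_m R^m\mathbb{P}_m\phi\to\phi$ from Theorem \ref{thm:f-decom}. The one place where your route differs is the final identification of the limit. The paper squeezes the quantity $\sum_m\|\mathbb{P}_m\phi\|_{L^2}^2$ between $\|\phi\|_{L^2}^2$ from below (by letting $R\to 1^-$ in $\sum_m R^{2m}\|\mathbb{P}_m\phi\|^2=\|\sum_m R^m\mathbb{P}_m\phi\|^2$) and from above (since $\sum_{m=0}^N\mathbb{P}_m$ is an orthogonal projection), obtaining the Parseval identity $\sum_m\|\mathbb{P}_m\phi\|^2=\|\phi\|^2$ and then asserting the decomposition; strictly speaking this last inference still needs the observation that $\langle S_N\phi,\phi\rangle=\|S_N\phi\|^2\to\|\phi\|^2$ forces $\|\phi-\lim_N S_N\phi\|=0$. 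You instead identify the $L^2$-limit $\psi$ of the partial sums directly, by summation by parts showing that the Abel means $T_R\phi$ converge to $\psi$, and then invoke uniqueness of limits against Theorem \ref{thm:f-decom}. Your version is slightly longer but makes fully explicit a step the paper leaves implicit, and it recovers the Parseval identity as a corollary rather than as the engine of the proof; both arguments are sound.
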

As we almost accomplish the proofs of main theorems, we noted the reference \cite{narayanan}, where the authors studied the spectral projection operators on  H-type groups, a special kind of nilpotent Lie groups of step two. For this kind of groups, eigenfunctions of joint spectrum can be written down directly as
\begin{equation*}\begin{aligned}
\Delta_b u=(2k+n)|\tau|u,
\end{aligned}\end{equation*}
with
$$u(\mathbf{y},\mathbf{t}):=e^{-i\tau\cdot \mathbf{t}}L_k^{n-1}\left(\frac{|\tau||\mathbf{y}|^2}{2}\right)e^{-\frac{1}{4}|\tau||\mathbf{y}|^2}$$
 by using properties of  the Laguerre functions.
That means $u$ is an eigenfunction of the sub-Laplacian operator $\Delta_b$ with the eigenvalue $2k+n$ and by using explicit integrals, they can also write down the kernel of spectrum projection operator in a very concrete way.  But for general non-degenerate nilpotent Lie group of step two, we have
\begin{equation*}\begin{aligned}
\Delta_b \left(e^{i\mathbf{t}\cdot \tau}\widetilde{\mathscr L}_{\mathbf{k} }^{(\mathbf{0} )}(\mathbf{y}, \tau)\right)=\sum_{j=1}^n \mu_j(\tau)(2k_j+1)e^{i\mathbf{t}\cdot \tau}\widetilde{\mathscr L}_{\mathbf{k} }^{(\mathbf{0} )}(\mathbf{y}, \tau).
\end{aligned}\end{equation*}
 It is worth mentioning that eigenvalues depend on $\tau$, while for H-type groups, we always have $\mu_j(\tau)\equiv1$. We have to  construct eigenfunctions by using Laguerre calculus, which is more complicated in the general step two case.  The difficulty comes from the fact that the Laguerre functions $\mathscr L_{\mathbf{k} }^{(\mathbf{0} )}(\mathbf{y}, \tau)$'s are only distrubutions and their definition depend on $\tau$-coordinates, i.e. depend on directions in $\Bbb R^r$. Their behavior is so bad that it is difficult to construct the kernel $P_m$ of the spectral projection operator $\mathbb{P}_m$ and prove it to be Lipschitzian on $\mathcal{N}\backslash \{\mathbf{0}\}$. But for $\mathbf{y}\neq 0$, we still have a simple integral representation formula for $P_m$, and have to use analytic continuation for $\mathbf{y}=0$.  For general nondegenerate nilpotent Lie groups of step two, the substantial work to do is to construct the spectral projection operators and their kernels.

The paper is organized as follows. In Section \ref{laguerre-dis}, we recall the Laguerre calculus on non-degenerate nilpotent Lie group of step two. In Section \ref{section6}, we give the joint spectrum of $(\Delta_b, i \mathbf{T})$ and  eigenfunctions $\Psi_{\mathbf{k},\tau}^{(\lambda)}(\mathbf{y}, \mathbf{t})$ corresponding to $(\lambda, -\tau) \in \sigma(\Delta_b, i \mathbf{T})$ and  the relation between function $\Psi_{\mathbf{k},\tau}^{(\lambda)}(\mathbf{y}, \mathbf{t})$  and $P_m$. In Section \ref{section3}, we give the integral representation of $P_m$ for $\mathbf{y}\neq\mathbf{0}$ and  further continuate it  to a Lipschitzian function on $\mathcal{N} \backslash \{\mathbf{0}\}$ by analytic continuation. In Section \ref{section4}, we prove that the distribution $\mathscr{P}_m$ is homogeneous of degree $-Q$ and  is a homogeneous extension of $P_m$, and $ \mathbb{P}_{m}$ is a spectral projection operator and a Calder\'{o}n-Zygmund operator. So it can be extended to a bounded operator from the $L^p(\mathcal{N})$ to itself. In Section \ref{section5},  another kind of principal value $\widehat{p.v.} P_m$ is introduced  in order to  estimating the boundedness of the operator associated with the Abel sum $\sum_{m=0}^{\infty}R^m\mathbb{P}_{m}$. We give the explicit bound of the operator  norm of $\widehat{p.v.} P_m$ on $L^{p}(\mathcal{N})$ and $ \widehat{p.v.}\sum_{m=0}^{\infty}R^mP_{m} $ on $L^{p}$ respectively. Then we give the proof of Theorem \ref{thm:f-decom}. For  $\phi \in L^2(\mathcal{N})$, we prove the decomposition $\sum_{m=0}^{\infty} \mathbb{P}_{m} \phi =\phi$  without Abel sum.

\section{preliminary}
\subsection{Laguerre distribution on non-degenerate nilpotent Lie group of step two}\label{laguerre-dis}

 Laguerre calculus is the symbolic tensor calculus originally  induced in terms of the Laguerre functions on the Heisenberg group $\mathcal{H}_n$. It was first introduced on $\mathcal{H}_1$ by Greiner~\cite{Greiner} and later extended to
$\mathcal{H}_n$ and $\mathcal{H}_n\times \mathbb{R}^m$ by Beals, Gaveau, Greiner and Vauthier~\cite{BGGV}\cite{BGV}.
The Laguerre functions have been used in the study of the twisted convolution, or equivalently, the Heisenberg convolution for several decades.
For example, Geller~\cite{Ge} found a formula that expressed the group Fourier transform of radial functions on the isotropic Heisenberg group in terms of Laguerre transform, while Peetre~\cite{Pe} derived the relation between the Weyl transform and Laguerre calculus.
 See Chang-Chang-Tie \cite{CCT}, Chang-Greiner-Tie \cite{CGT}, Tie \cite{T2} for the application to find the inversion of differential operators, and Chang-Tie \cite{CT} for the study of the associated spectral projection operators. The Laguerre calculus was developed for non-degenerate nilpotent Lie groups of step two in Chang-Markina-Wang \cite{CMW}. It was used in  \cite{CKW} to  find the explicit formulas for the heat kernel of sub-Laplacian operator and the fundamental solution of power of sub-Laplacian operator on this kind of  groups.

The skew-symmetric mapping $B: \mathbb{R}^{2n}\times \mathbb{R}^{2n}\rightarrow \mathbb{R}^r$ in \eqref{eq:multipler} can be written as
 \begin{equation*}\label{eq:Phi-B}
B(\mathbf{x},\mathbf{y} ):=\left(B^1(\mathbf{x},\mathbf{y} ),\ldots,B^r(\mathbf{x},\mathbf{y} )\right),\qquad B^\beta(\mathbf{x},\mathbf{y} ):=  \sum_{ k,l=1}^{2n}    B^\beta_{kl}  x_ky_l,
\end{equation*}
for $\mathbf{x}=(x_1,\ldots,x_{2n}), \mathbf{y}=(y_1,\ldots,y_{2n})\in \Bbb R^{2n}$.
For any $\tau=(\tau_1,\ldots, \tau_r)\in  \mathbb{R}^r \setminus\{\mathbf{0}\}$, denote a  skew-symmetric $2n\times 2n$ matrix
 \begin{equation*}\label{B-tau}
  B^\tau:= \sum_{\beta=1}^r\tau_\beta B^\beta.
 \end{equation*}
 For simplicity, we assume that $B^{\tau}$ is non-degenerate for all $\tau\in \Bbb R^r$ in this paper and the corresponding nilpotent Lie group of step two is called {\it non-degenerate}. Vector fields
\begin{equation}\label{eq:Y}
    \mathbf{Y}_k:=\partial_{y_k}+ 2  \sum_{\beta=1}^{r}\sum_{l=1}^{2n}  B^\beta_{lk} y_{l}
 \partial_{ t_\beta}, \qquad k=1,\ldots,2n,
 \end{equation}
are left invariant vector fields on $\mathcal{N}$.

 Chang-Markina-Wang \cite{CMW} showed that for $\tau\in S^{r-1}\setminus E$, where $E\subset S^{r-1}$ is of Hausdorff dimension at most $r-2$, there exists an orthonormal basis locally normalizing $B^{\tau}$.

 \begin{pro} \cite[Proposition 2.3]{CMW}\label{prop:orthonormal-basis}
There exists a subset $E \subseteq S^{r-1}$ of Hausdorff dimension     at most $   r-2 $, such that for any   $\tau_0\in  S^{r-1}\setminus E$, we can find a neighborhood $U$ of $\tau_0\in  S^{r-1}$ and an   orthonormal basis
$\{v^\tau_1,$ $\ldots, v^\tau_{{2n}}\}$ of $\mathbb{R}^{{2n}}$, smoothly depending on $\tau \in U$,  such that the matrix
$O(\tau)= (v^\tau_1,\ldots, v^\tau_{{2n}})$ normalizes $B^\tau$, i.e.
\begin{equation}\label{eq:J}
O(\tau)^t  B^\tau O(\tau)=J(\tau) :=\left(
  \begin{array}{ccccc}0& {-\mu_1(\tau)}&0&0&\cdots\\\mu_1(\tau)&0&0&0 &\cdots \\
 0&0&0&- \mu_2(\tau)&\cdots\\0&0&\mu_2(\tau)&0&\cdots\\\vdots&\vdots&\vdots&\vdots& \ddots
   \end{array}
\right),
\end{equation} where $\mu_1(\tau)\geq \mu_2(\tau)\geq\cdots\geq \mu_n(\tau)\geq 0$  also smoothly depend on $\tau$ in this neighborhood, and $i\mu_1(\tau),-i\mu_1(\tau),\ldots,i\mu_n(\tau),-i\mu_{ n}(\tau)$ represent repeated pure imaginary eigenvalues of $B^\tau$.
\end{pro}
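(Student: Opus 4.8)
The plan is to deduce everything from the real symmetric matrix $A^\tau:=-(B^\tau)^2$, which depends polynomially on $\tau$ and is positive definite for every $\tau\in S^{r-1}$ by the non-degeneracy hypothesis. Its distinct eigenvalues are exactly the distinct values among $\mu_1(\tau)^2,\dots,\mu_n(\tau)^2$, and each of them has \emph{even} multiplicity: since $B^\tau$ commutes with $A^\tau$ it preserves every eigenspace $W$ of $A^\tau$, and on the eigenspace for $\mu^2>0$ one has $\langle B^\tau u,B^\tau v\rangle=\langle A^\tau u,v\rangle=\mu^2\langle u,v\rangle$ and $(\mu^{-1}B^\tau)^2=-I$, so $\mu^{-1}B^\tau|_W$ is an orthogonal complex structure on $W$, forcing $\dim W$ even. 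This is the structure that will eventually split each eigenspace of $A^\tau$ into the $2\times2$ blocks of $J(\tau)$ in \eqref{eq:J}.

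First I would pin down the exceptional set. Let $p(\tau)$ be the number of distinct eigenvalues of $A^\tau$. Since $B^\tau$ is skew-symmetric, $\det(\lambda I-B^\tau)=\prod_{j=1}^n(\lambda^2+\mu_j(\tau)^2)$ is an even polynomial in $\lambda$ whose coefficients are polynomials in $\tau$; writing it as $g(\lambda^2,\tau)$ and setting $r_\tau(s):=(-1)^n g(-s,\tau)=\prod_{j=1}^n(s-\mu_j(\tau)^2)$, the coefficients of $r_\tau$ (in $s$) are polynomials in $\tau$, and $n-p(\tau)=\deg\gcd(r_\tau,r_\tau')$. As "$\deg\gcd(r_\tau,r_\tau')\ge k$" is the vanishing of finitely many subresultant polynomials in these coefficients, each set $\{\tau\in S^{r-1}:p(\tau)\ge k\}$ is the complement in $S^{r-1}$ of a closed real-algebraic subset. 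Put $p_{\max}:=\max_{S^{r-1}}p$, $\Omega:=\{\tau\in S^{r-1}:p(\tau)=p_{\max}\}$ and $E:=S^{r-1}\setminus\Omega$. Then $E$ is a proper closed real-algebraic subset of $S^{r-1}$ (which is irreducible for $r\ge2$, while $E=\varnothing$ trivially when $r=1$), hence $\dim E\le r-2$; since the Hausdorff dimension of a real-algebraic set equals its dimension, $\dim_HE\le r-2$.

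Next, on $\Omega$ I would construct the smooth normalizing frame. Fix $\tau_0\in\Omega$. Near $\tau_0$ the number of distinct eigenvalues of $A^\tau$ is constantly $p_{\max}$, so the distinct eigenvalues $d_1(\tau)>\dots>d_{p_{\max}}(\tau)>0$ never collide; enclosing each $d_i$ in a fixed small circle $\gamma_i\subset\mathbb C$, the Riesz projection $P_i(\tau)=\frac{1}{2\pi i}\oint_{\gamma_i}(zI-A^\tau)^{-1}\,dz$ depends real-analytically on $\tau$ on a neighborhood $U$ of $\tau_0$, with constant even rank $2m_i$, and $d_i(\tau)=\mathrm{tr}(A^\tau P_i(\tau))/(2m_i)$, $\mu^{(i)}:=d_i(\tau)^{1/2}>0$ are real-analytic. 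On $W_i(\tau):=\mathrm{ran}\,P_i(\tau)$ the map $J_i(\tau):=(\mu^{(i)})^{-1}B^\tau|_{W_i(\tau)}$ is an orthogonal complex structure by the first paragraph. I then run a complex Gram--Schmidt on $W_i(\tau)$: for generic fixed $a_1,\dots,a_{m_i}\in\mathbb R^{2n}$, put $w_1:=P_i(\tau)a_1/|P_i(\tau)a_1|$ and inductively $w_{l+1}:=R_l(\tau)a_{l+1}/|R_l(\tau)a_{l+1}|$, where $R_l(\tau)$ is the (smooth) orthogonal projection onto the $J_i(\tau)$-orthocomplement of $\mathrm{span}(w_1,J_iw_1,\dots,w_l,J_iw_l)$ inside $W_i(\tau)$; genericity of the $a_l$ keeps the denominators nonzero at $\tau_0$, hence on a neighborhood. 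By skew-symmetry each $J_iw_l$ is a unit vector orthogonal to $w_l$, so $\{w_l,J_iw_l\}_l$ is an orthonormal basis of $W_i(\tau)$ in which $B^\tau$ is block-diagonal with blocks $\left(\begin{smallmatrix}0&-\mu^{(i)}\\ \mu^{(i)}&0\end{smallmatrix}\right)$. Concatenating these frames over $i=1,\dots,p_{\max}$, ordered so the $\mu^{(i)}$ are decreasing and each repeated $m_i$ times, produces a smooth orthonormal basis $v_1^\tau,\dots,v_{2n}^\tau$ of $\mathbb R^{2n}$ on $U$ with $O(\tau)=(v_1^\tau,\dots,v_{2n}^\tau)$ satisfying $O(\tau)^tB^\tau O(\tau)=J(\tau)$ and $\mu_1(\tau)\ge\dots\ge\mu_n(\tau)>0$ smooth.

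The main obstacle is the \emph{smooth organization} of the eigendata rather than the dimension count: one cannot hope for a global smooth normalizing frame (monodromy obstructions exist), so the construction must be local and must avoid the locus where the eigenvalue multiplicity pattern of $A^\tau$ jumps — this is precisely why $E$ is unavoidable, and why the decisive input is that "$p(\tau)=p_{\max}$" carves out a Zariski-open set, on which the Riesz-projection formula delivers honest real-analytic dependence. The only remaining delicate point is internal to each eigenspace $W_i(\tau)$: converting the datum "$\mu^{(i)-1}B^\tau$ is an orthogonal complex structure" into a smoothly varying $J(\tau)$-adapted orthonormal basis, which the complex Gram--Schmidt above handles.
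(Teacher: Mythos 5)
This proposition is imported verbatim from \cite[Proposition 2.3]{CMW}; the present paper contains no proof of it, so your argument has to be judged on its own terms, and on those terms it is correct and essentially complete. The three pillars are all sound: (1) each eigenvalue $\mu^2$ of the positive definite matrix $A^\tau=-(B^\tau)^2$ has even multiplicity because $\mu^{-1}B^\tau$ restricts to an orthogonal complex structure on the corresponding eigenspace; (2) the exceptional set $E=\{\tau: p(\tau)<p_{\max}\}$ is cut out on $S^{r-1}$ by subresultants of $r_\tau(s)=\prod_j(s-\mu_j(\tau)^2)$, whose coefficients are polynomials in $\tau$, and since at least one of these subresultants is not identically zero on $S^{r-1}$ (because $\Omega\neq\varnothing$), $E$ is contained in the zero set of a nontrivial real-analytic function on the connected $(r-1)$-manifold $S^{r-1}$ and so has Hausdorff dimension at most $r-2$ (with $r=1$ handled separately); (3) on the open set $\Omega$, where $p$ is locally constant by lower semicontinuity, the Riesz projections $P_i(\tau)$ are real-analytic with constant rank, and your $J_i$-adapted Gram--Schmidt does produce an orthonormal frame in which $\langle w_l,B^\tau J_iw_l\rangle=-\mu^{(i)}$ and $\langle J_iw_l,B^\tau w_l\rangle=\mu^{(i)}$, reproducing the blocks of \eqref{eq:J} with the $\mu_j$ ordered and smooth since the distinct eigenvalues never cross on $U$. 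Two minor remarks. First, the statement as quoted allows $\mu_n(\tau)=0$, i.e.\ it is formulated so as to cover degenerate groups, whereas your argument uses positive definiteness of $A^\tau$; under the standing non-degeneracy hypothesis of this paper that is harmless, and in the degenerate case one would only need to adjoin $\ker B^\tau$ (a smooth even-rank subbundle off a further algebraic set) equipped with an arbitrary smooth orthonormal frame. Second, your $E$ need not coincide with the set constructed in \cite{CMW}, but the proposition only asserts the existence of some such $E$, so this is immaterial.
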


 We  can write a point $\mathbf{y}\in \mathbb{R}^{2n}$ in terms of the basis $\{v^\tau_k\}$ as
 \begin{equation}\label{eq:y'-''}
   \mathbf{y}=\sum_{k=1}^{2n} y_k^\tau v^\tau_k    \in \mathbb{R}^{2n}
 \end{equation} for some $y_1^\tau,\ldots,y_{2n}^\tau\in \mathbb{R}$. We call $(y_1^\tau,\ldots,y_{2n}^\tau)$ the {\it $\tau$-coordinates} of the point $\mathbf{y}\in \mathbb{R}^{2n}$.

   Chang-Markina-Wang \cite{CMW}  defined Laguerre distributions by using $\tau$-coordinates.
Let $L_k^{(p)}$ be the generalized Laguerre polynomials.
 It is well known \cite{BCT} that
\begin{equation*}\label{eq:l-L}
l_k^{(p)}(\sigma):=\left [\frac {\Gamma(k+1)}{\Gamma(k+p+1)} \right]^{\frac 12}L_{k}^{(p)}( \sigma)\sigma^{\frac p2} e^{-\frac \sigma2},
\quad
\end{equation*}
 where $\sigma\in [0,\infty),  k, p\in \mathbb {Z}_{\geq 0} $,  constitute an orthonormal basis of $L^2([0,\infty),d\sigma)$ for fixed  $p$.
 Let  $ \mathscr  L_{k}^{(p)}$ on $\mathbb{R}^2 \times \mathbb{R}^r$ be the distribution defined via their partial Fourier transformations
\begin{equation}\label{eq:exponential-Laguerre0}
 \widetilde{\mathscr  L}_{k}^{(p)}(y,\tau ): =\frac {2 |\tau|}{\pi} ({\rm sgn}\, p)^p l_k^{(|p|)}(2|\tau| |y|^2)e^{ip\theta},
\end{equation}
which is smooth on $(\Bbb R^2 \times \Bbb R^r)\setminus\{\mathbf{0}\} $, where $p\in \mathbb{Z}$,  $ y=(y_1,y_2)\in \mathbb{R}^2, y=y_1+iy_2=|y|e^{i\theta},  \tau\in \mathbb{R}^r  $. Here the {\it
partial Fourier transformation} of a function $\phi$ is defined as
\begin{equation*} \label{eq:partial-Fourier}
   \widetilde{\phi}(\mathbf{y},\tau):=\int_{\mathbb{R}^r}e^{-i\tau \cdot \mathbf{t}}\phi(\mathbf{y},\mathbf{t})d\mathbf{t}, \qquad {\rm for}\quad \tau \in \mathbb{R}^r.
\end{equation*}
The \emph{partial Fourier transformation of a distribution} $\mathscr{K}$ is defined as
\begin{equation*}
\langle\widetilde{\mathscr{K}},\phi \rangle :=\langle \mathscr{K},  \widetilde{\phi}\rangle,
\end{equation*}
for $\phi \in \mathcal{S}(\mathcal{N})$.
 Define the {\it exponential Laguerre distributions} ${ \mathscr L}_{\mathbf{k} }^{(\mathbf{p} )}(y,s)$ on $\mathbb{R}^{2n+r}$ via their partial Fourier transformations
\begin{equation}\label{eq:exponential-Laguerre}
 \widetilde{ \mathscr L}_{\mathbf{k} }^{(\mathbf{p} )}(\mathbf{y},\tau ):=\prod_{j=1}^n \mu_j(\dot{\tau})\widetilde{\mathscr  L}_{k_j}^{(p_j)} \left(\sqrt {\mu_j(\dot{\tau})}{\mathbf y^\tau_j}, \tau\right),
\end{equation}
where $ \mathbf{y}\in \mathbb{R}^{2n},\tau\in \mathbb{R}^r, \mathbf{p}=(p_1,\ldots p_n)\in \mathbb{Z}^n  , $ $\mathbf{k}=(k_1,\ldots k_n)\in \mathbb{Z}^n_{\geq 0} $, and
\begin{equation*}\label{eq:mu-dot}
\dot{ \tau}=\frac \tau{|\tau|}\in S^{r-1},\qquad
   \mu_j(\tau)=|\tau|\mu_j(\dot{\tau}),
   \end{equation*}
   \begin{equation*}\label{eq:y-j}
{\mathbf y^{\tau}_j} = \left ( y_{2j-1}^\tau, y_{2 j}^\tau\right)\in \mathbb{R}^2,\qquad  {j}=1,\ldots,n.
\end{equation*}

The definition of $\widetilde{ \mathscr L}_{\mathbf{k} }^{(\mathbf{p} )}(\mathbf{y},\tau )$ above depends on the choice of local orthonormal basis normalizing $B^{\tau}$, and in that local neighborhood, it smoothly depends on $\mathbf{y}$ and $\tau$.  Then  $\widetilde{ \mathscr L}_{\mathbf{k} }^{(\mathbf{p} )}(\mathbf{y},\tau )$ is locally integrable and so $\mathscr L_{\mathbf{k} }^{(\mathbf{p} )}$ as a distribution is well defined.
On the other hand, for any fixed $\tau\in \Bbb R^r \backslash \{\mathbf{0}\}$ with $B^{\tau}$ non-degenerate, $\widetilde{ \mathscr L}_{\mathbf{k} }^{(\mathbf{p} )}(\cdot,\tau )$ is a Schwartz function over $\Bbb R^{2n}$ for fixed $\tau, \mathbf{k}$ and $\mathbf{p}$. $\{\widetilde{ \mathscr L}_{\mathbf{k} }^{(\mathbf{p} )}(\cdot,\tau )\}_{\mathbf{p}\in \Bbb{Z}^n},\mathbf{k}\in \Bbb{Z}^n_{\geq 0}$ constitute an orthogonal basis of $L^2(\Bbb R^{2n})$ and behave nicely under the {\it twisted convolution} of two functions  $f,g\in L^1(\mathbb{R}^{{2n}})$, which is defined as
\begin{equation}\label{twisor-con}
\begin{split}
    f*_\tau g( \mathbf{y})&=\int_{\mathbb{R}^{2n}} e^{-2iB^\tau(\mathbf{y},\mathbf{x})} f(\mathbf{y}-\mathbf{x}) g(\mathbf{x})d\mathbf{x},
\end{split}\end{equation}for any fixed $\tau\in \mathbb{R}^r $.
\begin{pro}\cite[Corollary 3.1]{CMW} \label{cor:Minkowski} For $1\leq p\leq\infty$, we have
   \begin{equation*}\label{eq:Minkowski}
      \|u*_\tau v\|_{L^p(\mathbb{R}^{{2n}})}\leq \|u \|_{L^1(\mathbb{R}^{{2n}})}\|  v\|_{L^p(\mathbb{R}^{{2n}})}.
   \end{equation*}
\end{pro}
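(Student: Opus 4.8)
The plan is to discard the unimodular oscillatory factor in the twisted convolution and reduce everything to the ordinary Euclidean convolution. First I would observe that $B^\tau(\mathbf{y},\mathbf{x})$ is real, so that $|e^{-2iB^\tau(\mathbf{y},\mathbf{x})}|=1$ for all $\mathbf{x},\mathbf{y}\in\mathbb{R}^{2n}$; consequently, directly from \eqref{twisor-con},
\[
|u*_\tau v(\mathbf{y})|\ \leq\ \int_{\mathbb{R}^{2n}}|u(\mathbf{y}-\mathbf{x})|\,|v(\mathbf{x})|\,d\mathbf{x}\ =\ \bigl(|u|* |v|\bigr)(\mathbf{y}),
\]
where $*$ denotes the usual convolution on $\mathbb{R}^{2n}$. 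Hence it suffices to prove $\bigl\||u|*|v|\bigr\|_{L^p(\mathbb{R}^{2n})}\leq \|u\|_{L^1(\mathbb{R}^{2n})}\|v\|_{L^p(\mathbb{R}^{2n})}$, which is the classical Young inequality in the special case $L^1\ast L^p\to L^p$.

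For $1\leq p<\infty$ I would prove this last estimate by Minkowski's integral inequality: equivalently writing $(|u|*|v|)(\mathbf{y})=\int_{\mathbb{R}^{2n}}|u(\mathbf{x})|\,|v(\mathbf{y}-\mathbf{x})|\,d\mathbf{x}$ and regarding the right-hand side as a superposition, weighted by $|u(\mathbf{x})|$, of the translates $\mathbf{y}\mapsto |v(\mathbf{y}-\mathbf{x})|$, one gets
\[
\bigl\||u|*|v|\bigr\|_{L^p(\mathbb{R}^{2n})}\ \leq\ \int_{\mathbb{R}^{2n}}|u(\mathbf{x})|\,\bigl\|v(\cdot-\mathbf{x})\bigr\|_{L^p(\mathbb{R}^{2n})}\,d\mathbf{x}\ =\ \|u\|_{L^1(\mathbb{R}^{2n})}\,\|v\|_{L^p(\mathbb{R}^{2n})},
\]
where the last equality uses translation invariance of Lebesgue measure. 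The endpoint $p=\infty$ is even more immediate, since $|u*_\tau v(\mathbf{y})|\leq\int|u(\mathbf{y}-\mathbf{x})|\,|v(\mathbf{x})|\,d\mathbf{x}\leq\|v\|_{L^\infty}\|u\|_{L^1}$ for a.e.\ $\mathbf{y}$. Combining the displays yields the proposition.

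The point worth emphasizing is that there is essentially no analytic obstacle here: the oscillatory structure of $*_\tau$ — which is exactly what makes it differ from ordinary convolution in finer matters (noncommutativity, the orthogonality relations of the Laguerre basis, and so on) — is completely irrelevant for an $L^1$–$L^p$ bound, because all one uses is the trivial inequality $|e^{-2iB^\tau(\mathbf{y},\mathbf{x})}|\leq 1$. The only care genuinely needed is bookkeeping: the joint measurability of $(\mathbf{x},\mathbf{y})\mapsto u(\mathbf{y}-\mathbf{x})v(\mathbf{x})$ and the legitimacy of interchanging the $L^p$-norm with the integral, both handled by Tonelli's theorem since the relevant integrands are nonnegative, together with a routine density (or monotone-convergence) reduction to the case where $|u|*|v|$ is finite everywhere.
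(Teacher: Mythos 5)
Your proof is correct: bounding the unimodular factor $e^{-2iB^\tau(\mathbf{y},\mathbf{x})}$ by $1$ and invoking Young's inequality $L^1 * L^p \to L^p$ via Minkowski's integral inequality is exactly the standard argument, and the paper itself gives no proof but simply cites \cite[Corollary 3.1]{CMW}, whose label (``Minkowski'') indicates the same route. No gaps.
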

\begin{pro}\cite[Proposition 1.2]{CMW}\label{Proposition 1.2 in 7}
For $\mathbf{k},  \mathbf{p},\mathbf{q}, \mathbf{m}\in \mathbb{Z}_+^n$, we have \begin{equation*}
\label{eq:product}
\widetilde{\mathscr  L}_{(\mathbf{k}\wedge \mathbf{p})-\mathbf{1} }^{(\mathbf{p} -\mathbf{k})}\ast_\tau\widetilde{\mathscr  L}_{(\mathbf{q}\wedge \mathbf{m})-\mathbf{1} }^{(\mathbf{q} -\mathbf{m})}
      =\delta_{\mathbf{k}}^{(\mathbf{q})}\widetilde{\mathscr  L}_{(\mathbf{p}\wedge \mathbf{m})-\mathbf{1} }^{(\mathbf{p} -\mathbf{m})},
         \end{equation*}
    where     $\mathbf{p}\wedge \mathbf{m}-\mathbf{1}:=( \min(k_1,p_1)-1,\dots, \min(k_n,p_n)-1) $ and $\delta_{ \mathbf{{k}}}^{( \mathbf{{q}})} $ is the Kronecker delta function.
         \end{pro}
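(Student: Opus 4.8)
The plan is to obtain \eqref{eq:product} from the classical ``matrix-unit'' calculus of one-variable Laguerre functions under the twisted convolution (as developed on the Heisenberg group in \cite{BGGV}), after using Proposition~\ref{prop:orthonormal-basis} to decouple the situation on $\mathbb{R}^{2n}$ into $n$ independent two-dimensional ones. First I would fix $\tau\in\mathbb{R}^r\setminus\{\mathbf{0}\}$ with $B^\tau$ non-degenerate and pass to the $\tau$-coordinates \eqref{eq:y'-''}; there $B^\tau$ is brought to the block-diagonal normal form $J(\tau)$ of \eqref{eq:J}, so that
\[
B^\tau(\mathbf{y},\mathbf{x})=\sum_{j=1}^{n}\mu_j(\tau)\,\omega\!\left(\mathbf{y}^\tau_j,\mathbf{x}^\tau_j\right),\qquad \omega(u,v):=u_1v_2-u_2v_1,
\]
and the kernel $e^{-2iB^\tau(\mathbf{y},\mathbf{x})}$ in the definition \eqref{twisor-con} of $\ast_\tau$ factors across the $n$ symplectic planes; hence, on tensor products, $\ast_\tau$ factors into the $\mu_j(\tau)$-twisted convolutions on $\mathbb{R}^2$. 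Since $\widetilde{\mathscr L}_{\mathbf{k}}^{(\mathbf{p})}(\cdot,\tau)$ is, by \eqref{eq:exponential-Laguerre}, exactly such a tensor product of dilated one-variable functions $\widetilde{\mathscr L}^{(p_j)}_{k_j}$ — and, for $\tau$ fixed with $B^\tau$ non-degenerate, a genuine Schwartz function, so that no distributional subtlety enters the convolution — the identity \eqref{eq:product} factors over $j$, with $\delta_{\mathbf{k}}^{\mathbf{q}}=\prod_{j}\delta_{k_j}^{q_j}$, and it suffices to prove the one-variable version of \eqref{eq:product} on $\mathbb{R}^2$, the dilations $y\mapsto\sqrt{\mu_j(\dot{\tau})}\,y$ and the weights $\mu_j(\dot{\tau})$ in \eqref{eq:exponential-Laguerre} being absorbed in the course of the reduction.

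For the one-variable statement the idea is to realize $\widetilde{\mathscr L}^{(p)}_k(\cdot,\tau)$ as a matrix coefficient of the Schrödinger (Weyl) representation $\rho$. Writing $a=k-1$ and $b=p-1$ (both $\geq0$, since $\mathbf{k},\mathbf{p}\in\mathbb{Z}_+^n$ — the $-\mathbf 1$ shift in the lower index is precisely what aligns the Laguerre indexing with the $\mathbb{Z}_{\geq0}$-indexing of matrix units), the normalization built into \eqref{eq:exponential-Laguerre0} together with the Rodrigues/generating-function formulas for the generalized Laguerre polynomials identify $\widetilde{\mathscr L}_{(k\wedge p)-1}^{(p-k)}(\cdot,\tau)$ with the special Hermite function $\Phi_{p-1,\,k-1}$ on $\mathbb{R}^2$, i.e.\ with the kernel of the rank-one operator $h_{p-1}\otimes\overline{h_{k-1}}$ on $L^2(\mathbb{R})$ in the Hermite basis $\{h_\ell\}_{\ell\geq0}$; the cases $p\geq k$ and $p<k$ are uniform, since \eqref{eq:exponential-Laguerre0} exhibits $\widetilde{\mathscr L}^{(p)}_k(\cdot,\tau)$ as a constant times $y^{p}L^{(p)}_k(2|\tau||y|^2)e^{-|\tau||y|^2}$ for $p\geq0$ and as a constant times $\overline{y}^{\,|p|}L^{(|p|)}_k(2|\tau||y|^2)e^{-|\tau||y|^2}$ for $p<0$, which is exactly the holomorphic/antiholomorphic decomposition of the $\Phi_{\mu\nu}$. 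The intertwining identity $\rho(f\ast_\tau g)=\rho(f)\,\rho(g)$ then converts the left-hand side of \eqref{eq:product} (in one variable) into the operator composition
\[
\left(h_{p-1}\otimes\overline{h_{k-1}}\right)\circ\left(h_{q-1}\otimes\overline{h_{m-1}}\right)=\langle h_{q-1},h_{k-1}\rangle\,h_{p-1}\otimes\overline{h_{m-1}}=\delta_{k}^{q}\,h_{p-1}\otimes\overline{h_{m-1}},
\]
whose kernel is $\delta_{k}^{q}\,\Phi_{p-1,\,m-1}$, and $\Phi_{p-1,\,m-1}$ corresponds back to $\widetilde{\mathscr L}_{(p\wedge m)-1}^{(p-m)}(\cdot,\tau)$; injectivity of $\rho$ on the relevant space of functions closes the case $n=1$.

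It then remains to multiply the $n$ one-variable identities, restore the dilations and weights, pass back to arbitrary coordinates on $\mathbb{R}^{2n}$, and undo the partial Fourier transform. The main obstacle — the only step beyond routine bookkeeping — is forcing the overall constant to equal $1$: one must reconcile the factor $\tfrac{2|\tau|}{\pi}$ in \eqref{eq:exponential-Laguerre0}, the dilation $\sqrt{\mu_j(\dot{\tau})}$ and weight $\mu_j(\dot{\tau})$ in \eqref{eq:exponential-Laguerre}, the scaling behaviour of $\ast_\tau$ under $y\mapsto\sqrt{\mu_j(\dot\tau)}\,y$, and the Plancherel constant of $\rho$, so that no residual power of $|\tau|$, of $\mu_j(\tau)$ or of $2\pi$ survives; the normalization $\tfrac{2|\tau|}{\pi}$ in \eqref{eq:exponential-Laguerre0} is chosen by Chang--Markina--Wang precisely so that this happens. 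Apart from this constant-tracking, the heart of the argument is the standard Heisenberg matrix-unit identity, and the only genuinely new ingredient is the decoupling afforded by the normal form $J(\tau)$ of Proposition~\ref{prop:orthonormal-basis}.
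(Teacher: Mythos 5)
This proposition is not proved in the paper at all --- it is imported verbatim from \cite[Proposition 1.2]{CMW} --- so there is no internal argument to compare against; what you propose is essentially the argument behind the cited result: the definition \eqref{eq:exponential-Laguerre} together with the normal form \eqref{eq:J} decouples $\ast_\tau$ into $n$ independent $\mu_j(\tau)$-twisted convolutions on $\mathbb{R}^2$, and the one-variable identity is the classical Heisenberg ``matrix-unit'' relation of Beals--Gaveau--Greiner--Vauthier \cite{BGGV}\cite{BCT}, which Chang--Markina--Wang quote rather than rederive through the Weyl transform as you do. Your index bookkeeping is consistent with the statement: the composition $\left(h_{p-1}\otimes\overline{h_{k-1}}\right)\circ\left(h_{q-1}\otimes\overline{h_{m-1}}\right)=\delta_{k}^{q}\,h_{p-1}\otimes\overline{h_{m-1}}$ does produce $\delta_{\mathbf{k}}^{(\mathbf{q})}$ and returns the kernel indexed by $(\mathbf{p},\mathbf{m})$, as required. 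The two points you leave implicit are genuine but routine: (i) the orientation convention, i.e.\ that the sign in $e^{-2iB^\tau(\mathbf{y},\mathbf{x})}$ in \eqref{twisor-con} makes the Weyl transform a homomorphism rather than an anti-homomorphism for $\ast_\tau$ (the opposite convention would yield $\delta_{\mathbf{p}}^{(\mathbf{m})}$ instead, so this must be checked against the definitions rather than assumed); and (ii) the verification that the normalizations in \eqref{eq:exponential-Laguerre0}--\eqref{eq:exponential-Laguerre} leave an overall constant equal to $1$, which you correctly identify as the remaining bookkeeping.
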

\begin{lem}\label{lem:L1,L2}\cite[Lemma 4.1]{CMW} On a non-degenerate nilpotent Lie group of step two, for almost all  $\tau \in \mathbb{R}^r$, we have
$$
\begin{aligned}
& \left\|\widetilde{\mathscr{L}}_{\mathbf{k}}^{(\mathbf{p})}(\cdot, \tau)\right\|_{L^2\left(\mathbb{R}^{2 n}\right)}^2=\frac{2^n\left(\operatorname{det}\left|B^\tau\right|\right)^{\frac{1}{2}}}{\pi^n}=\frac{2^n}{\pi^n} \prod_{j=1}^n \mu_j(\tau), \\
& \left\|\widetilde{\mathscr{L}}_{\mathbf{k}}^{(\mathbf{p})}(\cdot, \tau)\right\|_{L^1\left(\mathbb{R}^{2 n}\right)}=\prod_{j=1}^n\left\|l_{k_j}^{\left(p_j\right)}\right\|_{L^1\left(\mathbb{R}^1\right)}.
&
\end{aligned}
$$
\end{lem}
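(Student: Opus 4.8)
The plan is to reduce everything to the one–dimensional situation, where the $L^1$– and $L^2$–norms of the normalized Laguerre functions $l_k^{(p)}$ are classical, and then keep careful track of the Jacobian factors that enter through the $\tau$–coordinates $\mathbf y^\tau$ and the rescalings $\sqrt{\mu_j(\dot\tau)}\,\mathbf y_j^\tau$. First I would fix $\tau\in\mathbb R^r\setminus E$ with $B^\tau$ non-degenerate (this is a full–measure set by Proposition \ref{prop:orthonormal-basis}), choose the local orthonormal basis $\{v_k^\tau\}$ normalizing $B^\tau$, and pass to the $\tau$–coordinates via \eqref{eq:y'-''}. Since $O(\tau)=(v_1^\tau,\dots,v_{2n}^\tau)$ is orthogonal, the change of variables $\mathbf y\mapsto(y_1^\tau,\dots,y_{2n}^\tau)$ is measure preserving on $\mathbb R^{2n}$, so $\|\widetilde{\mathscr L}_{\mathbf k}^{(\mathbf p)}(\cdot,\tau)\|_{L^q(\mathbb R^{2n})}$ equals the same norm of the function $\prod_{j=1}^n\mu_j(\dot\tau)\,\widetilde{\mathscr L}_{k_j}^{(p_j)}\!\big(\sqrt{\mu_j(\dot\tau)}\,\mathbf y_j^\tau,\tau\big)$ of the variables $\mathbf y_j^\tau\in\mathbb R^2$, by the definition \eqref{eq:exponential-Laguerre}. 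Because this is a genuine tensor product over $j=1,\dots,n$ in disjoint pairs of variables, the $L^1$– and $L^2$–norms factor as products over $j$, and it remains to compute each two–dimensional factor.

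For the two–dimensional factor one substitutes $z_j=\sqrt{\mu_j(\dot\tau)}\,\mathbf y_j^\tau\in\mathbb R^2$, which produces a Jacobian $\mu_j(\dot\tau)^{-1}$. Combining with the prefactor $\mu_j(\dot\tau)$ in \eqref{eq:exponential-Laguerre}, for the $L^1$–norm the two cancel and we are left with $\|\widetilde{\mathscr L}_{k_j}^{(p_j)}(\cdot,\tau)\|_{L^1(\mathbb R^2)}$; writing $z_j=|z_j|e^{i\theta}$ and using the polar form \eqref{eq:exponential-Laguerre0}, the $e^{ip_j\theta}$ integrates to $2\pi$ over $\theta$ up to the $|e^{ip_j\theta}|=1$ in absolute value, and the radial integral in $\sigma=2|\tau||z_j|^2$ reproduces exactly $\|l_{k_j}^{(|p_j|)}\|_{L^1([0,\infty),d\sigma)}$ — note $|\tau|$ cancels against the $2|\tau|/\pi$ normalization, which is why the final answer is $\tau$–independent. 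This gives the stated $\|\widetilde{\mathscr L}_{\mathbf k}^{(\mathbf p)}(\cdot,\tau)\|_{L^1(\mathbb R^{2n})}=\prod_j\|l_{k_j}^{(p_j)}\|_{L^1(\mathbb R^1)}$. For the $L^2$–norm one squares: the prefactor contributes $\mu_j(\dot\tau)^2$, the Jacobian $\mu_j(\dot\tau)^{-1}$, the $(2|\tau|/\pi)^2$ and the radial substitution another $|\tau|^{-1}$, and the orthonormality of $\{l_k^{(p)}(\sigma)\}$ in $L^2([0,\infty),d\sigma)$ together with $\int_0^{2\pi}|e^{ip_j\theta}|^2\,d\theta=2\pi$ make the normalized Laguerre part contribute $1$; collecting constants leaves $\frac{2}{\pi}\,|\tau|\,\mu_j(\dot\tau)=\frac{2}{\pi}\mu_j(\tau)$ per factor, hence $\prod_j\frac{2}{\pi}\mu_j(\tau)=\frac{2^n}{\pi^n}\prod_j\mu_j(\tau)$. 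Finally, since the repeated pure imaginary eigenvalues of $B^\tau$ are $\pm i\mu_1(\tau),\dots,\pm i\mu_n(\tau)$ by Proposition \ref{prop:orthonormal-basis}, we have $\det|B^\tau|=\prod_{j=1}^n\mu_j(\tau)^2$, so $(\det|B^\tau|)^{1/2}=\prod_j\mu_j(\tau)$, which matches the first displayed identity.

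The only genuinely delicate point is the bookkeeping of the constants $2|\tau|/\pi$, the prefactor $\mu_j(\dot\tau)$, and the two Jacobians (one from $O(\tau)$, which is trivial, and one from the anisotropic dilation $\sqrt{\mu_j(\dot\tau)}$, which is not), and checking that all $|\tau|$–dependence cancels in the $L^1$ case while combining correctly to $|\tau|\mu_j(\dot\tau)$ in the $L^2$ case; I would also remark that the normalization of $l_k^{(p)}$ in the excerpt is chosen precisely so that $\|l_k^{(p)}\|_{L^2([0,\infty),d\sigma)}=1$, so no extra $\Gamma$–factors survive in the $L^2$ formula. Everything else — measure preservation under $O(\tau)$, Fubini for the tensor product, polar coordinates in $\mathbb R^2$ — is routine, and the exceptional set $E$ plays no role beyond guaranteeing that the normalizing basis exists.
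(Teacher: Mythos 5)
Your computation is correct. Note that the paper itself offers no proof of this lemma --- it is quoted verbatim from \cite[Lemma 4.1]{CMW} --- so there is no internal argument to compare against; your direct verification (orthogonality of $O(\tau)$ making the passage to $\tau$-coordinates measure preserving, Fubini over the tensor factors, the Jacobian $\mu_j(\dot\tau)^{-1}$ from the dilation cancelling or combining with the prefactor $\mu_j(\dot\tau)$, and the substitution $\sigma=2|\tau|\,|z_j|^2$ turning the radial integral into the $L^1$- or $L^2$-norm of $l_{k_j}^{(|p_j|)}$ on $[0,\infty)$) is exactly the computation needed, and the bookkeeping of the factors $2|\tau|/\pi$ comes out right: each $L^2$ factor yields $\tfrac{2}{\pi}\mu_j(\tau)$ and each $L^1$ factor is $\tau$-independent. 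Two minor remarks: the lemma's notation $l_{k_j}^{(p_j)}$ should be read as $l_{k_j}^{(|p_j|)}$ in accordance with \eqref{eq:exponential-Laguerre0}, which is what your argument actually produces; and your identification $(\det|B^\tau|)^{1/2}=\prod_j\mu_j(\tau)$ via the doubled eigenvalues $\mu_j(\tau)$ of $[(B^\tau)^tB^\tau]^{1/2}$ is the same fact the paper uses in the proof of Lemma \ref{lem:B-y}.
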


\subsection{The  joint spectrum of $(\Delta_b, i \mathbf{T})$}\label{section6}

 For a differential operator $D$ on the group $\mathcal{N}$, we denote by $\widetilde{D}$ the \emph{partial symbol} of $D$ with respect to $\tau\in \Bbb R^r$, i.e., $\partial_{t_{\beta}}$ is replaced by $i\tau_{\beta}$. Then
\begin{equation}\begin{aligned}\label{eq:Y-tilde}
    \widetilde{\mathbf{Y}}_k&=\partial_{y_k}+ 2i  \sum_{\beta=1}^{r}\sum_{l=1}^{2n}  B^\beta_{lk} y_{l} \tau_{\beta}, \qquad k=1,\ldots,2n,\\
    i\widetilde{\mathbf{T}}_{\beta}&=-\tau_{\beta},\qquad \qquad\qquad\qquad\qquad\beta=1,\ldots,r.
 \end{aligned}\end{equation}

 Since $\Delta_b$ and $i\mathbf{T}$ are essentially self-adjoint strongly commuting operators, there is a well defined joint spectrum. Similarly to the joint spectral theory  on the Heisenberg group investigated by
Strichartz \cite{S}, the  joint spectrum  on nilpotent Lie group of step two is defined as
 the complement of the set of $(\lambda, \tau)\in \Bbb C^{r+1}$ where neither $\lambda I-\Delta_b$ or $\tau \mathbf{I}_\beta-i\mathbf{T}_\beta$ is invertible. The  joint spectrum of $(\Delta_b, i \mathbf{T})$ contains the union \eqref{brush} by the following proposition.

\begin{pro}\label{pro:joint-eigen}
 The function defined by \eqref{joint-eigen-psi} for $(\lambda, -\tau) \in \sigma(\Delta_b, i \mathbf{T})$
 is the eigenfunction of $\Delta_b$ and $i\mathbf{T}$.
\end{pro}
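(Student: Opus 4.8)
The plan is to treat the two eigenvalue equations contained in the joint-eigenfunction display separately. The equation for $i\mathbf T$ is immediate: since $\Psi_{\mathbf k,\tau}^{(\lambda)}(\mathbf y,\mathbf t)=e^{i\mathbf t\cdot\tau}\,\widetilde{\mathscr L}_{\mathbf k}^{(\mathbf 0)}(\mathbf y,\tau)$ depends on $\mathbf t$ only through the factor $e^{i\mathbf t\cdot\tau}$, one has $i\mathbf T_\beta\Psi_{\mathbf k,\tau}^{(\lambda)}=i(i\tau_\beta)\Psi_{\mathbf k,\tau}^{(\lambda)}=-\tau_\beta\Psi_{\mathbf k,\tau}^{(\lambda)}$ for every $\beta$, i.e. $(-\tau_\beta\mathbf I-i\mathbf T_\beta)\Psi_{\mathbf k,\tau}^{(\lambda)}=\mathbf 0$. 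So the real content is the equation $\Delta_b\Psi_{\mathbf k,\tau}^{(\lambda)}=\lambda\,\Psi_{\mathbf k,\tau}^{(\lambda)}$.

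First I would reduce this to the partial symbol. Comparing \eqref{eq:Y} and \eqref{eq:Y-tilde}, for any product-type function $e^{i\mathbf t\cdot\tau}g(\mathbf y)$ one has $\mathbf Y_k(e^{i\mathbf t\cdot\tau}g)=e^{i\mathbf t\cdot\tau}\widetilde{\mathbf Y}_kg$, since $\partial_{t_\beta}$ applied to $e^{i\mathbf t\cdot\tau}$ produces the factor $i\tau_\beta$. Iterating and summing over $k$ as in \eqref{eq:L} gives $\Delta_b\Psi_{\mathbf k,\tau}^{(\lambda)}=e^{i\mathbf t\cdot\tau}\,\widetilde\Delta_b\,\widetilde{\mathscr L}_{\mathbf k}^{(\mathbf 0)}(\cdot,\tau)$, where $\widetilde\Delta_b:=-\tfrac14\sum_{k=1}^{2n}\widetilde{\mathbf Y}_k^{\,2}$; this is legitimate because for fixed $\tau$ (with $B^\tau$ non-degenerate) $\widetilde{\mathscr L}_{\mathbf k}^{(\mathbf 0)}(\cdot,\tau)$ is a Schwartz function of $\mathbf y$. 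It therefore suffices to prove
\[
\widetilde\Delta_b\,\widetilde{\mathscr L}_{\mathbf k}^{(\mathbf 0)}(\cdot,\tau)=\Big(\textstyle\sum_{j=1}^n(2k_j+1)\mu_j(\tau)\Big)\,\widetilde{\mathscr L}_{\mathbf k}^{(\mathbf 0)}(\cdot,\tau),
\]
the coefficient being precisely the $\lambda$ determined by $(\lambda,-\tau)\in\sigma(\Delta_b,i\mathbf T)$ in \eqref{brush}.

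To establish this, I would pass to $\tau$-coordinates $\mathbf y=\sum_k y_k^\tau v_k^\tau$. Writing $\widetilde{\mathbf Y}_k=\partial_{y_k}-2i(B^\tau\mathbf y)_k$ (using skew-symmetry of $B^\tau$), the orthogonal substitution $O(\tau)$ of Proposition \ref{prop:orthonormal-basis} turns $\sum_k\widetilde{\mathbf Y}_k^{\,2}$ into $\sum_j(\partial_{y_j^\tau}-2i(J(\tau)\mathbf y^\tau)_j)^2$ with $J(\tau)$ as in \eqref{eq:J} (for this one uses $O(\tau)^tO(\tau)=\mathbf I$, so the double sum collapses, and $O(\tau)^tB^\tau O(\tau)=J(\tau)$); since $J(\tau)$ is block-diagonal, this operator decouples as $\sum_{j=1}^n\mathcal L_{\mu_j(\tau)}$, where $\mathcal L_\mu$ is the $2$-dimensional twisted (magnetic) Laplacian in the variable $\mathbf y_j^\tau\in\mathbb R^2$ with parameter $\mu$. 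By \eqref{eq:exponential-Laguerre} and \eqref{eq:exponential-Laguerre0}, $\widetilde{\mathscr L}_{\mathbf k}^{(\mathbf 0)}(\mathbf y,\tau)$ equals, up to a positive constant, the product over $j$ of $L_{k_j}\!\big(2\mu_j(\tau)|\mathbf y_j^\tau|^2\big)e^{-\mu_j(\tau)|\mathbf y_j^\tau|^2}$. The classical eigenfunction relation for the radial Laguerre/special-Hermite functions, $\mathcal L_1\big[L_k(2|v|^2)e^{-|v|^2}\big]=(2k+1)L_k(2|v|^2)e^{-|v|^2}$, together with the scaling $\mathbf y_j^\tau\mapsto\sqrt{\mu}\,\mathbf y_j^\tau$ which turns $\mathcal L_\mu$ into $\mu\,\mathcal L_1$, shows that $\mathcal L_{\mu_j(\tau)}$ acts on the $j$-th factor with eigenvalue $(2k_j+1)\mu_j(\tau)$; multiplying the factors yields the displayed eigenvalue. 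Alternatively, this identity can be extracted from Laguerre calculus itself, by writing $\widetilde\Delta_b$ in terms of the annihilation/creation operators in $\tau$-coordinates and invoking the product formula of Proposition \ref{Proposition 1.2 in 7}, or from the heat-kernel computations of \cite{CMW,CKW}.

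The main obstacle is precisely the $\tau$-coordinate reduction in the last step: one must check carefully that conjugation by $O(\tau)$ truly decouples $\sum_k\widetilde{\mathbf Y}_k^{\,2}$ into $n$ independent two-dimensional operators, and, above all, that the particular rescaling $\sqrt{\mu_j(\dot\tau)}\,\mathbf y_j^\tau$ built into \eqref{eq:exponential-Laguerre} is exactly the one matching the parameter of $\mathcal L_{\mu_j(\tau)}$, so that the eigenvalues come out as $(2k_j+1)\mu_j(\tau)$ with no spurious constant. There are no distributional subtleties, since $\tau$ is held fixed and $\widetilde{\mathscr L}_{\mathbf k}^{(\mathbf 0)}(\cdot,\tau)$ is then a genuine Schwartz function of $\mathbf y$; and the restriction $\dot\tau\notin E$ in Proposition \ref{prop:orthonormal-basis} is harmless here, because for each individual $\tau$ an orthogonal $O(\tau)$ normalizing $B^\tau$ always exists and the eigenvalue $\sum_j(2k_j+1)\mu_j(\tau)$ is independent of this choice.
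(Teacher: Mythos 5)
Your proposal is correct and follows essentially the same route as the paper: both arguments handle the $i\mathbf{T}_\beta$ equations trivially through the factor $e^{i\mathbf{t}\cdot\tau}$, reduce $\Delta_b$ acting on $e^{i\mathbf{t}\cdot\tau}\widetilde{\mathscr L}_{\mathbf{k}}^{(\mathbf 0)}(\cdot,\tau)$ to the partial symbol $\widetilde{\Delta}_b$, and then invoke the eigenvalue identity $\widetilde{\Delta}_b\widetilde{\mathscr L}_{\mathbf{k}}^{(\mathbf 0)}=\sum_j\mu_j(\tau)(2k_j+1)\widetilde{\mathscr L}_{\mathbf{k}}^{(\mathbf 0)}$. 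The only difference is that the paper simply cites \cite{CMW} (Proposition 6.1 for the rotated-frame expression of $\Delta_b$ and identity (6.1) for the eigenvalue relation) where you supply the $\tau$-coordinate decoupling and the two-dimensional twisted-Laplacian computation explicitly; your verification of those steps, including the scaling built into \eqref{eq:exponential-Laguerre}, is accurate.
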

\begin{proof} Recall that for $v=(v_1,\ldots, v_{2n})\in \Bbb R^{2n}$,
\begin{equation*}Y_v:=\sum_{k=1}^{2n} v_kY_k=\partial_v+  2   B(y,v)\cdot
\partial_t,
\end{equation*}
is a left invariant vector field on $\mathcal{N} $, where
$ B(y,v )\cdot\partial_t:= B^1 (y,v )\partial_{t_1}+\cdots +B^r(y,v )\partial_{t_r}$
and  $\partial_v$ is the derivative    on $\mathbb{R}^{{2n}}$ along the direction $v$, i.e.
$ \partial_v =\sum_{k=1}^{{2n}}   v_k\partial_{y_k}$.   Then  by \cite[Proposition 6.1]{CMW},
 \begin{equation*}
 \Delta_b=-\frac{1}{4}\sum_{j=1}^{2n}Y_{v_j^\tau}Y_{v_j^\tau},
 \end{equation*}
 where $\{v_1^\tau,\ldots, v_{2n}^\tau\}$ is the local orthonormal basis of $\Bbb R^{2n}$.
 Then its partial symbol is
  \begin{equation*}\label{symbol-delta}
 \widetilde{\Delta}_b=-\frac{1}{4}\sum_{j=1}^{2n}\widetilde{Y}_{v_j^\tau}\widetilde{Y}_{v_j^\tau}.
 \end{equation*}
It follows from (6.1) in \cite[P.1875]{CMW} that
  \begin{equation}\label{tilda-delta}
 \widetilde{\Delta}_b\widetilde{ \mathscr L}_{\mathbf{k} }^{(\mathbf{0} )}(\mathbf{y},\tau)= \sum_{j=1}^{n}\mu_j(\tau)(2k_j+1)\widetilde{ \mathscr L}_{\mathbf{k} }^{(\mathbf{0} )}(\mathbf{y},\tau).
 \end{equation}
For $(\lambda, -\tau) \in \sigma(\Delta_b, i \mathbf{T})$, we see that
 \begin{equation*}
 (\lambda-\widetilde{\Delta}_b)  \widetilde{ \mathscr L}_{\mathbf{k} }^{(\mathbf{0} )}(\mathbf{y},\mathbf{\tau})
 =\bigg(\lambda \mathbf{I}-\sum_{j=1}^n\mu_j(\tau)(2k_j+1)\bigg)\widetilde{\mathscr L}_{\mathbf{k}}^{(\mathbf{0} )}(\mathbf{y},\mathbf{\tau})=0,
   \end{equation*}
and
   \begin{equation*}
   (-\tau_{\beta} \mathbf{I}-i\widetilde{\mathbf{T}}_{\beta}) \widetilde{ \mathscr L}_{\mathbf{k} }^{(\mathbf{0} )}(\mathbf{y},\mathbf{\tau})=(-\tau_{\beta}+\tau_{\beta})  \widetilde{ \mathscr L}_{\mathbf{k} }^{(\mathbf{0} )}(\mathbf{y},\mathbf{\tau})=0,
   \end{equation*}
by using \eqref{tilda-delta} and the second identity of \eqref{eq:Y-tilde}.
  On the other hand, since
  \begin{equation*}\begin{aligned}
  \mathbf{Y}_k\left(e^{i\mathbf{t}\cdot \tau}\widetilde{ \mathscr L}_{\mathbf{k} }^{(\mathbf{0} )}(\mathbf{y},\tau)\right)
  =&e^{i\mathbf{t}\cdot \tau}\bigg(\partial_{y_k}+2i  \sum_{\beta=1}^{r}\sum_{l=1}^{2n}  B^\beta_{lk} y_{l}
 \tau_\beta\bigg)\widetilde{ \mathscr L}_{\mathbf{k} }^{(\mathbf{0} )}(\mathbf{y},\tau)\\
 =&e^{i\mathbf{t}\cdot \tau} \widetilde{\mathbf{Y}}_k\widetilde{ \mathscr L}_{\mathbf{k} }^{(\mathbf{0} )}(\mathbf{y},\tau),
  \end{aligned}\end{equation*}
  and \begin{equation*}\begin{aligned}
  \mathbf{Y}_k\mathbf{Y}_k\left(e^{i\mathbf{t}\cdot \tau}\widetilde{ \mathscr L}_{\mathbf{k} }^{(\mathbf{0} )}(\mathbf{y},\tau)\right)=&\bigg(\partial_{y_k}+ 2  \sum_{\beta=1}^{r}\sum_{l=1}^{2n}  B^\beta_{lk} y_{l}
 \partial_{ t_\beta}\bigg)e^{i\mathbf{t}\cdot \tau} \widetilde{\mathbf{Y}}_k\widetilde{ \mathscr L}_{\mathbf{k} }^{(\mathbf{0} )}(\mathbf{y},\tau)\\
 =&e^{i\mathbf{t}\cdot \tau}\widetilde{\mathbf{Y}}_k \widetilde{\mathbf{Y}}_k\widetilde{ \mathscr L}_{\mathbf{k} }^{(\mathbf{0} )}(\mathbf{y},\tau),
  \end{aligned}\end{equation*}
  then
  \begin{equation*}
  \Delta_b \bigg(e^{i\mathbf{t}\cdot \tau}\widetilde{ \mathscr L}_{\mathbf{k} }^{(\mathbf{0} )}(\mathbf{y},\tau)\bigg)=e^{i\mathbf{t}\cdot \tau}\widetilde{\Delta}_b\widetilde{ \mathscr L}_{\mathbf{k} }^{(\mathbf{0} )}(\mathbf{y},\tau),
  \end{equation*}
by using \eqref{eq:L}. This yields that
 \begin{equation*}\label{eigen-tildephi-1}
 \begin{aligned}
(\lambda \mathbf{I}-\Delta_b) \Phi_{\mathbf{k},\dot{\tau}}^{(\lambda)}(\mathbf{y},\mathbf{t} )
=&e^{i\mathbf{t}\cdot \tau}(\lambda-\widetilde{\Delta}_b)\widetilde{ \mathscr L}_{\mathbf{k} }^{(\mathbf{0} )}(\mathbf{y},\tau)=0,
 \end{aligned}\end{equation*}
and
   \begin{equation*}\label{eigen-tildephi-2}
 \begin{aligned}
 (-\tau_\beta \mathbf{I}-i \mathbf{T}_\beta) \Psi_{\mathbf{k},\tau}^{(\lambda)}(\mathbf{y},\mathbf{t} )=(-\tau_\beta+\tau_\beta)e^{i\mathbf{t}\cdot \tau}\widetilde{ \mathscr L}_{\mathbf{k} }^{(\mathbf{0} )}(\mathbf{y},\tau)=0.
   \end{aligned}\end{equation*}
  The proposition is proved.
\end{proof}

 \begin{cor}\label{phi-pm}
  Let $\mathcal{N}$ be  a non-degenerate nilpotent Lie group of step two. For $P_{m}$ defined by \eqref{Pm-Qm} and  the eigenfunction $\Psi_{\mathbf{k}, \tau}^{(\lambda)}$ defined by \eqref{joint-eigen-psi} corresponding to the joint spectrum $(\lambda, -\tau) \in \sigma(\Delta_b, i \mathbf{T})$, we have
 \begin{equation*}\label{p=p}
  \begin{aligned}
\frac{1}{(2\pi)^r}\int_{\Bbb R^{r}}\sum_{|\mathbf{k}|=m} \Psi_{\mathbf{k},\tau}^{(\lambda)}(\mathbf{y}, \mathbf{t})d\tau=P_{m}(\mathbf{y}, \mathbf{t}),\qquad \text{for}\,\,\mathbf{y}\neq 0.
 \end{aligned}
 \end{equation*}
\end{cor}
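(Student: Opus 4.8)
The plan is to reduce the identity to the chain of definitions \eqref{joint-eigen-psi}, \eqref{Qm}, \eqref{Pm-Qm}: once the objects are unwound, the statement is essentially formal, and the only analytic point is the interchange of a \emph{finite} sum with the $\tau$-integral, which is legitimate precisely because $Q_m(\mathbf{y},\cdot)$ is integrable for $\mathbf{y}\neq\mathbf{0}$ (as recorded right after \eqref{Qm}).

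First I would recall that, by \eqref{joint-eigen-psi}, the eigenfunction $\Psi_{\mathbf{k},\tau}^{(\lambda)}(\mathbf{y},\mathbf{t})=e^{i\mathbf{t}\cdot\tau}\widetilde{\mathscr L}_{\mathbf{k}}^{(\mathbf{0})}(\mathbf{y},\tau)$ carries the superscript $\lambda$ only as a label for the associated spectral point $\lambda=\sum_{j=1}^n\mu_j(\tau)(2k_j+1)$ of \eqref{brush}; the function itself does not depend on $\lambda$ in any other way. Consequently, for fixed $\mathbf{y}\neq\mathbf{0}$ and $\mathbf{t}$, and for a.e.\ $\tau\in\mathbb{R}^r$ (that is, off the radial cone over the exceptional set $E$ of Proposition \ref{prop:orthonormal-basis}, which has measure zero), the finiteness of the index set $\{\mathbf{k}\in\mathbb{Z}_{\geq 0}^n:|\mathbf{k}|=m\}$ lets one pull out the exponential and collect the terms into $Q_m$:
\[
\sum_{|\mathbf{k}|=m}\Psi_{\mathbf{k},\tau}^{(\lambda)}(\mathbf{y},\mathbf{t})
=e^{i\mathbf{t}\cdot\tau}\sum_{|\mathbf{k}|=m}\widetilde{\mathscr L}_{\mathbf{k}}^{(\mathbf{0})}(\mathbf{y},\tau)
=e^{i\mathbf{t}\cdot\tau}Q_m(\mathbf{y},\tau),
\]
the last equality being the definition \eqref{Qm}.

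Next I would integrate this identity in $\tau$ over $\mathbb{R}^r$ and divide by $(2\pi)^r$. Since $Q_m(\mathbf{y},\cdot)\in L^1(\mathbb{R}^r)$ for $\mathbf{y}\neq\mathbf{0}$, the right-hand side converges absolutely, so the interchange of the finite sum $\sum_{|\mathbf{k}|=m}$ with $\int_{\mathbb{R}^r}d\tau$ is justified, giving
\[
\frac{1}{(2\pi)^r}\int_{\mathbb{R}^r}\sum_{|\mathbf{k}|=m}\Psi_{\mathbf{k},\tau}^{(\lambda)}(\mathbf{y},\mathbf{t})\,d\tau
=\frac{1}{(2\pi)^r}\int_{\mathbb{R}^r}e^{i\mathbf{t}\cdot\tau}Q_m(\mathbf{y},\tau)\,d\tau,
\]
and the right-hand side is, verbatim, $P_m(\mathbf{y},\mathbf{t})$ by the definition \eqref{Pm-Qm}.

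There is no real obstacle here: the statement is a bookkeeping identity linking the spectral-theoretic description of $\mathbb{P}_m$ with the Laguerre-calculus kernel $P_m$. The only points requiring a word of care are that $\widetilde{\mathscr L}_{\mathbf{k}}^{(\mathbf{0})}(\mathbf{y},\cdot)$, hence $Q_m(\mathbf{y},\cdot)$, is only defined off a null set, and that the asserted absolute integrability of $Q_m(\mathbf{y},\cdot)$ for $\mathbf{y}\neq\mathbf{0}$ is what makes the $\tau$-integral—and therefore the whole identity—well posed; both are already available in the excerpt. If desired, one may further remark that this computation identifies $P_m$, for $\mathbf{y}\neq\mathbf{0}$, with the smooth part of the convolution kernel $\mathscr{P}_m$ of \eqref{def-dis-pm}.
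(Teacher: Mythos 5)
Your proposal is correct and follows essentially the same route as the paper: unwind the definitions to get $\sum_{|\mathbf{k}|=m}\Psi_{\mathbf{k},\tau}^{(\lambda)}(\mathbf{y},\mathbf{t})=e^{i\mathbf{t}\cdot\tau}Q_m(\mathbf{y},\tau)$, then integrate in $\tau$ and invoke the integrability estimate \eqref{Qm-integrable} to recover $P_m$ via \eqref{Pm-Qm}. The additional remarks about the null set from Proposition \ref{prop:orthonormal-basis} and the role of $\lambda$ as a mere label are accurate but not needed beyond what the paper states.
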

\begin{proof}
Since
  \begin{equation*} \begin{aligned}\label{sum-phi}
 \sum_{|\mathbf{k}|=m} \Psi_{\mathbf{k}, \tau}^{(\lambda)}(\mathbf{y}, \mathbf{t}) =e^{i\mathbf{t}\cdot \tau}Q_m(\mathbf{y}, \tau),
  \end{aligned}\end{equation*}
 by using \eqref{joint-eigen-psi} and \eqref{Qm}, then
 \begin{equation*} \begin{aligned}
 \frac{1}{(2\pi)^r}\int_{\Bbb R^{r}}\sum_{|\mathbf{k}|=m} \Psi_{\mathbf{k}, \tau}^{(\lambda)}(\mathbf{y}, \mathbf{t})d\tau
=\frac{1}{(2\pi)^r}\int_{\Bbb R^{r}}e^{i\mathbf{t}\cdot \tau}Q_m(\mathbf{y}, \tau)d\tau
=P_{m}(\mathbf{y}, \mathbf{t}),\qquad \text{for}\,\,\mathbf{y}\neq 0.
 \end{aligned}\end{equation*}
 Here $Q_m(\mathbf{y}, \tau)$ is integrable in $\tau$ for $\mathbf{y}\neq 0$ by following estimate \eqref{Qm-integrable}.
\end{proof}

\section{The kernel of spectral projection operator $\mathbb{P}_m$ }\label{section3}
\subsection{The kernel $P_m(\mathbf{y},\mathbf{t})$ for $\mathbf{y}\neq 0$}
\begin{lem}\label{lem:B-y}
On a  non-degenerate nilpotent Lie group $\mathcal{N}$ of step two, there exists a constant $C>1$  independent of $\tau$ and  $\mathbf{y}$ such that
\begin{equation*}
C^{-1} |\mathbf{y}|^2 \leq \left|\langle\mathcal{B}^\tau\mathbf{y},\mathbf{y}\rangle\right| \leq C |\mathbf{y}|^2,\qquad\text{for}\,\,\tau\in S^{r-1}.
\end{equation*}
\end{lem}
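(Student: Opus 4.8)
The plan is to reduce the two-sided bound to a compactness argument on the sphere $S^{r-1}$, using that $\mathcal{B}^\tau$ is symmetric positive definite for every $\tau\in S^{r-1}$ under the non-degeneracy hypothesis.

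First I would record the spectral description of $\mathcal{B}^\tau$. Since $B^\tau$ is real and skew-symmetric, $(B^\tau)^t B^\tau=-(B^\tau)^2$, and because the eigenvalues of $B^\tau$ are $\pm i\mu_1(\tau),\ldots,\pm i\mu_n(\tau)$ by Proposition \ref{prop:orthonormal-basis}, the eigenvalues of $(B^\tau)^t B^\tau$ are $\mu_1(\tau)^2,\ldots,\mu_n(\tau)^2$, each with multiplicity two. Hence $\mathcal{B}^\tau=[(B^\tau)^t B^\tau]^{1/2}$ is symmetric positive semidefinite with eigenvalues $\mu_1(\tau)\ge\cdots\ge\mu_n(\tau)\ge 0$ (again of multiplicity two), and it is in fact positive definite precisely because $B^\tau$ is non-degenerate, which forces $\mu_n(\tau)>0$. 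In particular $\langle\mathcal{B}^\tau\mathbf{y},\mathbf{y}\rangle\ge 0$, so the absolute value in the statement is harmless, and the Rayleigh quotient bounds give
$$
\mu_n(\tau)\,|\mathbf{y}|^2\;\le\;\langle\mathcal{B}^\tau\mathbf{y},\mathbf{y}\rangle\;\le\;\mu_1(\tau)\,|\mathbf{y}|^2,\qquad \mathbf{y}\in\mathbb{R}^{2n}.
$$

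Next I would make these bounds uniform in $\tau$. The map $\tau\mapsto B^\tau=\sum_{\beta=1}^r\tau_\beta B^\beta$ is linear, hence continuous, and the eigenvalues of a symmetric matrix depend continuously on its entries; consequently $\tau\mapsto\mu_j(\tau)$ is continuous on all of $S^{r-1}$ — note that we only need continuity here, not the local smoothness away from the exceptional set $E$ provided by Proposition \ref{prop:orthonormal-basis}. Since $S^{r-1}$ is compact and $\mu_n(\tau)>0$, $\mu_1(\tau)<\infty$ for every $\tau\in S^{r-1}$, the quantities $c:=\min_{\tau\in S^{r-1}}\mu_n(\tau)$ and $C_0:=\max_{\tau\in S^{r-1}}\mu_1(\tau)$ satisfy $0<c\le C_0<\infty$. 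Taking $C:=\max\{C_0,\,c^{-1},\,2\}>1$ then yields $C^{-1}|\mathbf{y}|^2\le\langle\mathcal{B}^\tau\mathbf{y},\mathbf{y}\rangle\le C|\mathbf{y}|^2$ for all $\tau\in S^{r-1}$ and all $\mathbf{y}$, as claimed.

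Alternatively, and without invoking the spectral picture at all, one may simply observe that $(\tau,\omega)\mapsto\langle\mathcal{B}^\tau\omega,\omega\rangle$ is a continuous, strictly positive function on the compact set $S^{r-1}\times S^{2n-1}$ — continuity of $\mathcal{B}^\tau$ in $\tau$ following from continuity of the positive-definite matrix square root — hence bounded between two positive constants, after which one homogenizes in $\mathbf{y}$. I do not expect any genuine obstacle in this lemma; the only point worth stating carefully is that the non-degeneracy assumption is exactly what keeps $\mu_n(\tau)$ (equivalently, the infimum of the Rayleigh quotient) bounded away from $0$ uniformly on the compact sphere.
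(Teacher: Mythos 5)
Your proposal is correct and follows essentially the same route as the paper: identify the eigenvalues of $\mathcal{B}^\tau$ as $\mu_1(\tau),\ldots,\mu_n(\tau)$ (each doubled), bound the quadratic form by the extreme eigenvalues, and use continuity of eigenvalues together with compactness of $S^{r-1}$ and non-degeneracy to make the bounds uniform. The only (minor, and arguably favorable) difference is that you use the Rayleigh quotient directly, whereas the paper writes $\langle\mathcal{B}^\tau\mathbf{y},\mathbf{y}\rangle=\sum_j\mu_j(\tau)|\mathbf{y}_j^\tau|^2$ in the $\tau$-coordinates coming from the locally defined diagonalizing basis of Proposition \ref{prop:orthonormal-basis}; your version sidesteps the exceptional set $E$ on which that basis is not provided.
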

\begin{proof}
The lemma holds for $\mathbf{y}=0$ obviously.
For $\mathbf{y}\neq 0$,
$$(B^{\tau})^{t}B^{\tau}=O(\tau)J(\tau)^{t}J(\tau) O(\tau)^{t},$$
where $J(\tau)$ is defined by \eqref{eq:J} and $O(\tau)$ is the orthogonal matrix given by Proposition \ref{prop:orthonormal-basis}.
 Note that $J(\tau)^{t}J(\tau) =\Lambda^{2}(\tau)$, where $$\Lambda(\tau):={\rm diag}\bigg(\mu_{1}(\tau),\mu_{1}(\tau),\mu_{2}(\tau),\mu_{2}(\tau),\ldots,\mu_{n}(\tau),\mu_{n}(\tau)\bigg).$$
  So
\begin{equation*}\label{B-tau-||}
\mathcal{B}^\tau=[O(\tau)J(\tau)^{t}J(\tau) O(\tau)^{t}]^{\frac{1}{2}}=O(\tau)\Lambda(\tau)O(\tau)^{t}.
\end{equation*}
Namely, $\mu_j(\tau)$'s are eigenvalues of $\mathcal{B}^{\tau}$. Then
\begin{equation}\label{Byy}
\langle\mathcal{B}^\tau\mathbf{y},\mathbf{y}\rangle =\sum_{j=1}^n\mu_{j}(\tau)|\mathbf{y}_j^{\tau}|^{2}=|\tau|\sum_{j=1}^n\mu_{j}(\dot{\tau})|\mathbf{y}_j^{\dot{\tau}}|^{2},
\end{equation}
by Proposition \ref{prop:orthonormal-basis} and \eqref{eq:y'-''}.
Since  $B^\tau$ is linear in $\tau$ and  eigenvalues $\mu_j(\tau)$'s  depend continuously on $\tau$ and $\mu_j(\tau) >0$ by Proposition \ref{prop:orthonormal-basis} again,  then there exists a constant $C>1$ independent on $\tau$, such that
$$C^{-1}\leq \mu_{j}(\tau) \leq C,\qquad \text{for}\,\, \tau\in S^{r-1}.$$
Therefore $$ C^{-1}|\mathbf{y}|^2\leq \left|\langle\mathcal{B}^\tau\mathbf{y},\mathbf{y}\rangle\right|
=\left|\sum_{j=1}^n\mu_{j}(\tau)|\mathbf{y}_j^{\tau}|^2\right|\leq C |\mathbf{y}|^2.$$
The lemma is proved.
\end{proof}

\emph{Proof of Proposition \ref{pro:pm-expression}.}
The integral in \eqref{eq:ex-I-1} is well-defined for any $\mathbf{y}\neq \mathbf{0}$  since the denominator of the integrand does not vanish by  Lemma \ref{lem:B-y}.
Noting that \begin{equation*}
\begin{aligned}
 (\det\mathcal{B}^\tau)^{\frac{1}{2}}&=\prod_{j=1}^{n} \mu_{j}(\tau)=|\tau|^{n}(\det\mathcal{B^{\dot{\tau}}})^{\frac{1}{2}},
 \end{aligned}
   \end{equation*}
   by  Lemma 4.1 in \cite{CMW}, we have
    \begin{equation}\label{Qm-2}
 \begin{aligned}
 	Q_{m}(\mathbf{y}, \tau) &= \sum_{|\mathbf{k}|=m} \prod_{j=1}^{n} \mu_{j}(\dot{\tau}) \widetilde{\mathscr L}_{k_{j}}^{(0)}\left(\sqrt{\mu_{j}(\dot{\tau})} \mathbf{y}_j^{\tau}, \tau\right) \\
 	&=\frac{2^{n}(\det\mathcal{B}^\tau)^{\frac{1}{2}}}{\pi^{n}} e^{-\sum_{j=1}^n\mu_{j}(\tau)|\mathbf{y}_j^{\tau}|^{2}}\sum_{|\mathbf{k}|=m }\prod_{j=1}^{n}L_{k_{j}}^{(0)}\left(2 \mu_{j}(\tau)\left|\mathbf{y}_j^{\tau}\right|^{2}\right)  \\
 	&=\frac{2^{n}(\det\mathcal{B}^\tau)^{\frac{1}{2}}}{\pi^{n}} e^{-\langle\mathcal{B}^\tau\mathbf{y},\mathbf{y}\rangle } L_{m}^{(n-1)}(2\langle\mathcal{B}^\tau\mathbf{y},\mathbf{y}\rangle ),
 \end{aligned}
 \end{equation}
by using \eqref{Qm}, \eqref{eq:exponential-Laguerre0}, \eqref{eq:exponential-Laguerre} and  \eqref{Byy}. Here we use
 \begin{equation*}
 \sum_{|\mathbf{k}|=m }\prod_{j=1}^{n}L_{k_{j}}^{(0)}\left(\sigma_j\right)
 =L_{m}^{(n-1)}\bigg(\sum_{j=1}^n\sigma_{j}\bigg),\qquad \text{for}\,\, \sigma_j\in \Bbb R,
  \end{equation*}
  in the last identity, which can be followed by the fact that
  \begin{equation*}
 \sum_{j=1 }^nL_{n}^{(\alpha)}(x)L_{n-m}^{(\beta)}(y)
 =L_{n}^{(\alpha+\beta+1)}(x+y),
  \end{equation*}
  see \cite[P. 249]{special}.  Noting that  $\prod_{j=1}^{n} \mu_{j}(\dot{\tau})$ is bounded on $S^{r-1}$, there exists a constant $C>0$, such that
\begin{equation}\label{Qm-integrable}
\begin{aligned}
&\left|Q_m(\mathbf{y},\tau)\right|
\lesssim e^{-C^{-1}|\tau||\mathbf{y}|^{2}}|\tau|^{n}(C|\mathbf{\tau}||\mathbf{y}|^{2}+1)^m,
\end{aligned}
\end{equation}
by Lemma \ref{lem:B-y}, since $L_{m}^{(n-1)}(s)$ is a polynomial of degree $m$ in $s$. It follows that  $Q_{m}(\mathbf{y}, \cdot)$  for $\mathbf{y}\neq 0$ is integrable on $\Bbb R^r$ and we can take the partial inverse Fourier transform with respect to $\tau$ to get
\begin{equation}\label{eq:pm-r}
\begin{aligned}
 P_{m}(\mathbf{y}, \mathbf{t})&=\frac{1}{(2 \pi)^{r}} \int_{\Bbb R^r}e^{i \mathbf{t} \cdot \tau} Q_{m}(\mathbf{y}, \tau) d \tau\\
 &=\frac{1}{(2 \pi)^{r}} \int_{\Bbb R^r}e^{i \mathbf{t} \cdot \tau}\frac{2^{n}}{\pi^{n}}(\det\mathcal{B}^\tau)^{\frac{1}{2}}
 e^{-\langle\mathcal{B}^\tau\mathbf{y},\mathbf{y}\rangle } L_{m}^{(n-1)}(2\langle\mathcal{B}^\tau\mathbf{y},\mathbf{y}\rangle )d\tau\\
 &=\frac{ 2^{n-r}}{\pi^{n+r}}\int_{S^{r-1}} (\det\mathcal{B^{\dot{\tau}}})^{\frac{1}{2}}d\dot{\tau}\int_{0}^{\infty}|\tau|^{n+r-1}  e^{-|\tau|\sigma+i|\tau|\mathbf{t}\cdot \dot{\tau}} L_{m}^{(n-1)}(2|\tau|\sigma) d|\tau|\\
  &=\frac{1}{4^r\pi^{n+r}}\int_{S^{r-1}} \frac{(\det\mathcal{B^{\dot{\tau}}})^{\frac{1}{2}}}{\sigma^{n+r}}d\dot{\tau}\int_{0}^{\infty} s^{n+r-1} e^{-\frac{s}{2}\left(1-\frac{i \mathbf{t} \cdot \dot{\tau}}{\sigma}\right)} L_{m}^{(n-1)}(s) d s,
 \end{aligned}
 \end{equation}
  where we denote $\sigma:=\langle \mathcal{B^{\dot{\tau}}}\mathbf{y},\mathbf{y}\rangle>0$,
 and take coordinates transformation $s=2|\tau|\sigma$ in the last identity.
 Note that
 $$ \int_{0}^{\infty} e^{-w s} s^{k} L_{m}^{(k)}(s) d s=\frac{(m+k) !}{m !} \cdot \frac{(w-1)^{m}}{w^{m+k+1}}, \quad\text{for}\,\, {\rm Re} w\in \Bbb R_{+},$$
 see \cite[P. 244]{special} and
 $$\bigg(\frac{(m+n-1)!}{m!}\cdot\frac{(w-1)^m}{w^{m+n}}\bigg)^{(r)}=
 \sum_{j=0}^r(-1)^{r-j}\dbinom{r}{j}\frac{(m+n+r-j-1)!}{(m-j)!}\frac{(w-1)^{m-j}}{w^{m+n+r-j}}.$$
 Then we have
\begin{equation*}
 \begin{aligned}
 	\int_{0}^{\infty} s^{n+r-1} e^{-w s} L_{m}^{(n-1)}(s) d s
  &=(-1)^r\frac{d^r}{d w^r} \int_{0}^{\infty} s^{n-1} e^{-w s} L_{m}^{(n-1)}(s) d s \\
 	&=(-1)^r\frac{d^r}{d w^r}\left[\frac{(m+n-1) !}{m !} \cdot \frac{\left(w-1\right)^{m}}{w^{m+n}}\right] \\
 	&=\sum_{j=0}^r(-1)^{m}\dbinom{r}{j}\frac{(m+n+r-j-1)!}{(m-j)!}\cdot \frac{(1-w)^{m-j}}{w^{m+n+r-j}}.
 \end{aligned}
\end{equation*}
 Apply this identity to \eqref{eq:pm-r} with $w=\frac{1}{2}\left(1-\frac{i \mathbf{t} \cdot \dot{\tau}}{\sigma}\right)$ to get
 \begin{equation*}
\begin{aligned}
 P_{m}(\mathbf{y}, \mathbf{t})
  &=\frac{1}{4^r\pi^{n+r}}\sum_{j=0}^r(-1)^{m}\dbinom{r}{j}\frac{(m+n+r-j-1)!}{(m-j)!}\int_{S^{r-1}} \frac{(\det\mathcal{B^{\dot{\tau}}})^{\frac{1}{2}}}{\sigma^{n+r}} \frac{\left(\frac{\sigma+i \mathbf{t} \cdot \dot{\tau}}{2\sigma}\right)^{m-j}}{\left(\frac{\sigma-i \mathbf{t} \cdot \dot{\tau}}{2\sigma}\right)^{m+n+r-j}}d\dot{\tau}\\
  &=\sum_{j=0}^rC_{m,j}\int_{S^{r-1}} \frac{(\det\mathcal{B^{\dot{\tau}}})^{\frac{1}{2}}\left(\sigma+i \mathbf{t} \cdot \dot{\tau}\right)^{m-j}}{\left(\sigma-i \mathbf{t} \cdot \dot{\tau}\right)^{m+n+r-j}}d\dot{\tau},
 \end{aligned}
 \end{equation*}
 where $C_{m,j}$ is given by
   \begin{equation} \label{mj}
  C_{m,j}:=\frac{(-1)^{m}2^{n-r}}{\pi^{n+r}}\dbinom{r}{j}\frac{(m+n+r-1-j)!}{(m-j)!}.
  \end{equation}
Hence, \eqref{eq:ex-pm}-\eqref{eq:ex-I-1} hold.  The proposition is proved. \qed
  \subsection{The extension of $P_m$ to a Lipschitzian function on $\mathcal{N}\backslash \{\mathbf{0}\}$}
\begin{pro}\label{pro:B-tau-hol}
On a  non-degenerate nilpotent Lie group $\mathcal{N}$ of step two, $\mathcal{B}^{\tau}$ and $(\mathcal{B}^{\tau})^{\frac{1}{2}}$ are holomorphic in $\tau$ in a neighborhood $U$ of $S^{r-1} \subset \Bbb C^{r}$.
\end{pro}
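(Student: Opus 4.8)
The plan is to realize $\mathcal{B}^\tau$ and $(\mathcal{B}^\tau)^{\frac12}$ through the holomorphic functional calculus applied to the matrix-valued function $A(\tau):=(B^\tau)^tB^\tau$, which depends polynomially — hence entirely — on $\tau\in\Bbb C^r$. First I would record the real picture: for $\tau\in S^{r-1}$ the matrix $B^\tau=\sum_\beta\tau_\beta B^\beta$ is real and skew-symmetric, and by the non-degeneracy hypothesis it is invertible, so $A(\tau)=-(B^\tau)^2$ is real symmetric and positive definite, with spectrum $\{\mu_j(\tau)^2\}_{j=1}^n\subset(0,\infty)$ by Proposition \ref{prop:orthonormal-basis}. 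In particular $\operatorname{spec}A(\tau)$ lies in the slit plane $\Bbb C\setminus(-\infty,0]$, on which the principal branches $z\mapsto z^{1/2}$ and $z\mapsto z^{1/4}$ are holomorphic; and $\mathcal{B}^\tau=A(\tau)^{1/2}$, $(\mathcal{B}^\tau)^{\frac12}=A(\tau)^{1/4}$ are exactly the positive-definite roots produced by the functional calculus with these branches (uniqueness of the positive square root for a symmetric positive definite matrix).

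Next I would fix a single contour. Since $S^{r-1}$ is compact and the eigenvalues of a matrix depend continuously on its entries, the union $\bigcup_{\tau\in S^{r-1}}\operatorname{spec}A(\tau)$ is contained in a compact interval $[a,b]\subset(0,\infty)$; pick a smooth simple closed curve $\Gamma\subset\Bbb C\setminus(-\infty,0]$ winding once around $[a,b]$. Because the entries of $A(\tau)$ are polynomials in $\tau$, another compactness-plus-continuity argument produces an open neighbourhood $U\subset\Bbb C^r$ of $S^{r-1}$ such that for every $\tau\in U$ the spectrum of $A(\tau)$ stays inside the region bounded by $\Gamma$ and off $\Gamma$ itself. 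I then define, for $\tau\in U$,
\[
\mathcal{B}^\tau:=\frac{1}{2\pi i}\oint_\Gamma\sqrt{z}\,\bigl(zI-A(\tau)\bigr)^{-1}dz,\qquad (\mathcal{B}^\tau)^{\frac12}:=\frac{1}{2\pi i}\oint_\Gamma z^{1/4}\,\bigl(zI-A(\tau)\bigr)^{-1}dz,
\]
which, for real $\tau\in S^{r-1}$, reduce to the roots in the statement by the spectral theorem for the symmetric positive definite matrix $A(\tau)$.

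Finally I would check holomorphy. For each fixed $z\in\Gamma$ the resolvent $(zI-A(\tau))^{-1}$ has entries that are rational in the entries of $A(\tau)$ with nonvanishing denominator $\det(zI-A(\tau))$, and the entries of $A(\tau)$ are polynomials in $\tau$; hence $\tau\mapsto(zI-A(\tau))^{-1}$ is holomorphic on $U$, separately in each $\tau_\beta$ and jointly continuous. Since $\Gamma$ is compact and the integrand is jointly continuous in $(z,\tau)$ and holomorphic in $\tau$, standard results on parameter integrals (Morera plus Fubini in each variable, then Hartogs/Osgood for joint holomorphy) show that every entry of the two contour integrals is holomorphic in $\tau\in U$. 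The main — indeed the only — obstacle is the uniform choice of $\Gamma$: one must rule out that as $\tau$ leaves the real sphere an eigenvalue of $A(\tau)$ wanders onto $\Gamma$ or onto the cut $(-\infty,0]$, and this is exactly what the compactness of $S^{r-1}$ together with the continuity of $\tau\mapsto\operatorname{spec}A(\tau)$ (continuity of the roots of the characteristic polynomial in its coefficients) buys us, after shrinking $U$ if necessary.
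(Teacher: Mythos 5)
Your proposal is correct and follows essentially the same route as the paper: both realize $\mathcal{B}^\tau$ and $(\mathcal{B}^\tau)^{\frac12}$ via the Riesz--Dunford contour-integral functional calculus applied to $(B^\tau)^tB^\tau$, using the polynomial dependence on $\tau$, the continuity of eigenvalues, and the compactness of $S^{r-1}$ to fix a single contour enclosing the spectrum for all $\tau$ in a neighborhood $U$ of the sphere. Your write-up is, if anything, slightly more explicit than the paper's about the choice of branches ($z^{1/2}$ versus $z^{1/4}$) and about why the parameter integral is holomorphic in $\tau$.
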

\begin{proof}
 Suppose that $\phi$ is holomorphic in a domain $\Delta \subseteq \Bbb C$  containing all the eigenvalues of a $2n \times 2n$ matrix $T$, and let $\gamma \subset \Delta$  be a simple closed smooth curve with positive direction enclosing all the eigenvalues of $T$. Then
 \begin{equation*}\label{T-int}
 \phi(T)=\frac{1}{2\pi i}\int_{\gamma}\phi(z)(z I_{2n}-T)^{-1}dz,
 \end{equation*}
 (cf. \cite[P. 44]{kato}). Recall that for a matrix-valued function $T(x)$ continuous  in $x$,
 the eigenvalues of $T(x)$ is continuous in $x$ (cf.  \cite[P. 107] {kato}). $(B^{\tau})^{t}B^{\tau}$ is a quadratic function and so analytic in $\tau\in \Bbb C^r$.
 For $\tau\in S^{r-1}$, its eigenvalues are all positive by non-degeneracy. We may assume that for all $\tau\in S^{r-1}$,  $\mu_j(\tau)\in [a,b]$, for some  $a,b>0$.  This together with the continuity of eigenvalues implies that there exists a neighborhood $U$ of
 $S^{r-1}$, so that all eigenvalues of $(B^{\tau})^{t}B^{\tau}$ is contained in a curve
 $\gamma$ containing $[a,b]$ which lies in the right half space. Now taking $\phi(z)$ to be a branch of multi-valued holomorphic function $z^{\frac{1}{2}}$ on $\Bbb C \backslash (-\infty, 0)$ with $\phi(x)=x^{\frac{1}{2}}$ for $x\in (0, +\infty)$, we get
 \begin{equation*}
 \mathcal{B}^{\tau}=\frac{1}{2\pi i}\int_{\gamma}z^{\frac{1}{2}}(z I_{2n}-(B^{\tau})^{t}B^{\tau})^{-1}dz,
 \end{equation*}
  and then
 \begin{equation*}
 \frac{\partial}{\partial \overline{\tau}} \mathcal{B}^{\tau}=\frac{1}{2\pi i}\int_{\gamma}\frac{\partial}{\partial \overline{\tau}}z^{\frac{1}{2}}(z I_{2n}-(B^{\tau})^{t}B^{\tau})^{-1}dz=0.
 \end{equation*}
 Then $ \mathcal{B}^{\tau}$ is holomorphic in $\tau\in U$ because $(B^{\tau})^{t}B^{\tau}$ is holomorphic in $\tau\in U$.
Similarly, $(\mathcal{B}^{\tau})^{\frac{1}{2}}$ is also holomorphic in $\tau$ in a neighborhood $U$ of $S^{r-1} \subset \Bbb C^{r}$. The proposition is proved. \end{proof}

We can extend $P_m$ continuously to the subspace $\mathbf{y}=0$ except $(\mathbf{0},\mathbf{0})$ and get the explicit integral representation formula of  $P_m$ for $(\mathbf{y},\mathbf{t})\neq(\mathbf{0},\mathbf{0})$.
\begin{thm}\label{th:pm-ex}
   Let $\mathcal{N}$ be a non-degenerate nilpotent Lie group of step two. For any $\mathbf{t}_0 \in \Bbb R^r$, there exists a neighborhood  $U$ of $\mathbf{t}_0$ and  an orthogonal matrix $O_\mathbf{t}$, smoothly depending on $\mathbf{t}\in U$ and fixing $\mathbf{t}$, such that $P_m(\mathbf{y}, \mathbf{t})$ given by \eqref{eq:ex-pm} also has the following expression
  \begin{equation}\label{eq:ex-pm-2}
 \begin{aligned}
  P_{m}(\mathbf{y}, \mathbf{t})
     = &\sum_{j=0}^r C_{m,j}\widetilde{\mathcal{I}}_{m,j}(\mathbf{y}, \mathbf{t}),
  \end{aligned}\end{equation}
  where $C_{m,j}$ is given by \eqref{mj},
  \begin{equation}\label{eq:ex-I-2}
 \begin{aligned}
  \widetilde{\mathcal{I}}_{m,j}(\mathbf{y}, \mathbf{t})
  :=\int_{L_\varepsilon}dz\int_{S^{r-2}}
  \frac{(\det\mathcal{B}^{O_{\mathbf{t}}\chi_{\tau^{\prime}}(z)})^{\frac{1}{2}}
  \left(\langle\mathcal{B}^{O_{\mathbf{t}}\chi_{\tau^{\prime}}(z)}
  \mathbf{y},\mathbf{y}\rangle+i|\mathbf{t}|z\right)^{m-j}(1-z^2)^{\frac{r-2}{2}}d\tau^{\prime}}
  {\left(\langle\mathcal{B}^{O_{\mathbf{t}}\chi_{\tau^{\prime}}(z)}\mathbf{y},\mathbf{y}\rangle-i |\mathbf{ t}|z\right)^{m+n+r-j}},
  \end{aligned}\end{equation}
and  $L_{\varepsilon}$ is the changed contour  of $(-1,1)$ given by
\begin{equation*}\begin{aligned}
    L_{\varepsilon}:=\left\{z=x+iy\in \Bbb C, x\in (-1,1), y= \varepsilon \sqrt{1-x^2}\right\},\quad\text{for sufficiently  small}\,\,\varepsilon.
    \end{aligned}\end{equation*}
 $\chi_{\tau^{\prime}}$ is the mapping given by \eqref{chi}. $ \widetilde{\mathcal{I}}_{m,j}(\mathbf{y}, \mathbf{t})$ still  makes sense for $\mathbf{y}=0$ but $\mathbf{t}\neq 0$.
  \end{thm}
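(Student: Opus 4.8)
\emph{Strategy and set-up.} The plan is to transport the integral \eqref{eq:ex-I-1} over $S^{r-1}$ to ``longitude--latitude'' coordinates adapted to the direction $\mathbf t/|\mathbf t|$, and then to deform the longitude contour off the real axis by Cauchy's theorem, using the holomorphy of $\sigma\mapsto\mathcal B^{\sigma}$ supplied by Proposition \ref{pro:B-tau-hol} together with the ellipticity estimate of Lemma \ref{lem:B-y}. One may assume $r\ge2$; for $r=1$ the integrand of \eqref{eq:ex-I-1} is already a finite sum with no singularity at $\mathbf y=\mathbf 0$. Fix $\mathbf t_0\in\Bbb R^{r}$. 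Since only $\mathbf t\neq\mathbf 0$ is at issue, I would produce, by a Gram--Schmidt construction starting from $\mathbf t/|\mathbf t|$, a family of orthogonal matrices $O_{\mathbf t}$ depending smoothly on $\mathbf t$ in a neighbourhood $U$ of $\mathbf t_0$ (with $\mathbf 0$ deleted if $\mathbf t_0=\mathbf 0$) and satisfying $O_{\mathbf t}e_{1}=\mathbf t/|\mathbf t|$; this local choice is the only reason $U$ has to be shrunk. Recalling from \eqref{chi} that $\chi_{\tau'}(z)=\bigl(z,\sqrt{1-z^{2}}\,\tau'\bigr)$ is the slice parametrisation of $S^{r-1}$ ($z\in(-1,1)$, $\tau'\in S^{r-2}$), the substitution $\tau=O_{\mathbf t}\chi_{\tau'}(z)$ gives $\mathbf t\cdot\tau=|\mathbf t|\,e_{1}\cdot\chi_{\tau'}(z)=|\mathbf t|z$ and turns $\int_{S^{r-1}}(\cdots)\,d\tau$ into $\int_{-1}^{1}dz\int_{S^{r-2}}(\cdots)\,d\tau'$ carrying exactly the weight $(1-z^{2})^{\frac{r-2}{2}}$ of \eqref{eq:ex-I-2}. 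Thus, for $\mathbf y\neq\mathbf 0$, $\mathcal I_{m,j}(\mathbf y,\mathbf t)$ becomes the iterated integral in \eqref{eq:ex-I-2} with the contour $L_{\varepsilon}$ replaced by $(-1,1)$, and it remains to justify that contour change and then to check convergence at $\mathbf y=\mathbf 0$.

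\emph{The contour shift for $\mathbf y\neq\mathbf 0$.} Fix $\mathbf y\neq\mathbf 0$ and $\tau'\in S^{r-2}$, and write $g(z)$ for the inner integrand. Since $\chi_{\tau'}$ involves $\sqrt{1-z^{2}}$, it extends holomorphically to $\Bbb C\setminus\bigl((-\infty,-1]\cup[1,\infty)\bigr)$, and by Proposition \ref{pro:B-tau-hol} the maps $\sigma\mapsto(\mathcal B^{\sigma})^{1/2}$ and $\sigma\mapsto\det\mathcal B^{\sigma}$ are holomorphic in a complex neighbourhood of $S^{r-1}$. I would then observe that on $L_{\varepsilon}$, and throughout the lens $\Lambda_{\varepsilon}$ bounded by $L_{\varepsilon}$ and $(-1,1)$, one has the uniform bound $|\chi_{\tau'}(z)-\chi_{\tau'}(\operatorname{Re}z)|=O(\varepsilon)$, because $\operatorname{Im}z$ is of size $\varepsilon\sqrt{1-(\operatorname{Re}z)^{2}}$, which exactly offsets the singular growth of $\sqrt{1-z^{2}}$ near $z=\pm1$; hence for $\varepsilon$ small $O_{\mathbf t}\chi_{\tau'}$ maps $\overline{\Lambda_{\varepsilon}}$ into the above neighbourhood of $S^{r-1}$. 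On $\overline{\Lambda_{\varepsilon}}$ the denominator would satisfy
\[
\operatorname{Re}\bigl(\langle\mathcal B^{O_{\mathbf t}\chi_{\tau'}(z)}\mathbf y,\mathbf y\rangle-i|\mathbf t|z\bigr)=\operatorname{Re}\langle\mathcal B^{O_{\mathbf t}\chi_{\tau'}(z)}\mathbf y,\mathbf y\rangle+|\mathbf t|\operatorname{Im}z\ \ge\ \tfrac12 C^{-1}|\mathbf y|^{2}>0
\]
by Lemma \ref{lem:B-y} and continuity, so it would never vanish and $g$ would be holomorphic on a neighbourhood of $\overline{\Lambda_{\varepsilon}}$ away from the endpoints $\pm1$. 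Since $(1-z^{2})^{\frac{r-2}{2}}\to0$ (when $r>2$) or stays bounded (when $r=2$) at $z=\pm1$ while the remaining factors of $g$ stay bounded there, a standard limiting application of Cauchy's theorem on $[-1+\delta,1-\delta]$ as $\delta\to0$ (with the short connecting segments contributing nothing) would give $\int_{-1}^{1}g\,dz=\int_{L_{\varepsilon}}g\,dz$. Integrating in $\tau'$, summing over $j$ with the constants $C_{m,j}$ of \eqref{mj}, and combining with \eqref{eq:ex-pm}, this would yield \eqref{eq:ex-pm-2}--\eqref{eq:ex-I-2} for every $\mathbf y\neq\mathbf 0$ and $\mathbf t\in U$.

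\emph{The case $\mathbf y=\mathbf 0$.} Here I would note that on $L_{\varepsilon}$ one has $|z|^{2}=(\operatorname{Re}z)^{2}(1-\varepsilon^{2})+\varepsilon^{2}\ge\varepsilon^{2}$, so at $\mathbf y=\mathbf 0$ the denominator in \eqref{eq:ex-I-2} is $\bigl|{-i}|\mathbf t|z\bigr|\ge\varepsilon|\mathbf t|>0$ whenever $\mathbf t\neq\mathbf 0$, while $\det\mathcal B^{O_{\mathbf t}\chi_{\tau'}(z)}$ and $z$ are bounded on the compact set $L_{\varepsilon}\times S^{r-2}$ and $|(1-z^{2})^{\frac{r-2}{2}}|\le C\,(1-(\operatorname{Re}z)^{2})^{\frac{r-2}{2}}$ is integrable on $(-1,1)$. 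Hence $\widetilde{\mathcal I}_{m,j}(\mathbf 0,\mathbf t)$ is an absolutely convergent integral, which is moreover continuous in $(\mathbf y,\mathbf t)$ near $\{\mathbf y=\mathbf 0,\ \mathbf t\neq\mathbf 0\}$, which is the claimed continuation of $P_m$.

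\emph{Main obstacle.} The delicate point is the uniform control near the branch points $z=\pm1$: one must verify simultaneously that $L_{\varepsilon}$ and the lens $\Lambda_{\varepsilon}$ stay inside the slit plane of $\sqrt{1-z^{2}}$ \emph{and} inside the domain of holomorphy of $\sigma\mapsto\mathcal B^{\sigma}$, that $|\chi_{\tau'}(z)-\chi_{\tau'}(\operatorname{Re}z)|$ is $O(\varepsilon)$ uniformly in $\tau'$ and $\operatorname{Re}z\in(-1,1)$, and that the endpoint pieces in the contour shift genuinely vanish; once these are in place the rest is the bookkeeping of the coordinate change together with Proposition \ref{pro:B-tau-hol} and Lemma \ref{lem:B-y}.
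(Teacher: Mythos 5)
Your proposal is correct and follows essentially the same route as the paper: the same latitude--longitude parametrisation $\tau=O_{\mathbf t}\chi_{\tau'}(z)$ built from a Gram--Schmidt frame adapted to $\mathbf t/|\mathbf t|$, the same deformation of the $z$-contour from $(-1,1)$ to $L_\varepsilon$ via Cauchy's theorem using the holomorphy of $\sigma\mapsto\mathcal B^{\sigma}$ from Proposition \ref{pro:B-tau-hol} and the lower bound of Lemma \ref{lem:B-y} on the denominator, and the same observation that on $L_\varepsilon$ the denominator is bounded below by $\varepsilon|\mathbf t|$ when $\mathbf y=\mathbf 0$. Your treatment of the branch points $z=\pm1$ (the limiting argument on $[-1+\delta,1-\delta]$ and the explicit $O(\varepsilon)$ control of $|\chi_{\tau'}(z)-\chi_{\tau'}(\operatorname{Re}z)|$) is in fact slightly more careful than the paper's, which passes over these points silently.
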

\begin{proof} Let $\varepsilon$ be a small positive constant which we will choose later.
 For the case $\mathbf{t}=|\mathbf{t}|(1,0,\ldots,0)$, it follows from \eqref{eq:ex-I-1} that
 \begin{equation*}
 \begin{aligned}
  \mathcal{I}_{m,j}(\mathbf{y}, \mathbf{t})&=\int_{S^{r-1}}
  \frac{(\det\mathcal{B}^\tau)^{\frac{1}{2}}\left(\langle\mathcal{B}^\tau\mathbf{y},\mathbf{y}\rangle+i|\mathbf{t}|\tau_1\right)^{m-j}}
  {\left(\left\langle\mathcal{B}^\tau\mathbf{y},\mathbf{y}\right\rangle-i |\mathbf{ t}|\tau_1\right)^{m+n+r-j}} d\tau\\
  &=\int_{-1}^{1}d\tau_1\int_{\sqrt{1-\tau_1^2}S^{r-2}}
  \frac{(\det\mathcal{B}^{(\tau_1,\tau^{\prime\prime})})^{\frac{1}{2}}
  \left(\left\langle\mathcal{B}^{(\tau_1,\tau^{\prime\prime})}\mathbf{y},\mathbf{y}\right\rangle
  +i|\mathbf{t}|\tau_1\right)^{m-j}}
  {\left(\langle\mathcal{B}^{(\tau_1,\tau^{\prime\prime})}\mathbf{y},\mathbf{y}\rangle-i |\mathbf{ t}|\tau_1\right)^{m+n+r-j}}d\tau^{\prime\prime}\\
  &=\int_{-1}^{1}d\tau_1\int_{S^{r-2}}
  \frac{(\det\mathcal{B}^{(\tau_1,\sqrt{1-\tau_1^2}\tau^{\prime})})^{\frac{1}{2}}
  \left(\left\langle\mathcal{B}^{(\tau_1,\sqrt{1-\tau_1^2}\tau^{\prime})}\mathbf{y},\mathbf{y}\right\rangle+i|\mathbf{t}|\tau_1\right)^{m-j}}
  {\left(\left\langle\mathcal{B}^{(\tau_1,\sqrt{1-\tau_1^2}\tau^{\prime})}\mathbf{y},\mathbf{y}\right\rangle-i |\mathbf{ t}|\tau_1\right)^{m+n+r-j}} (1-\tau_1^2)^{\frac{r-2}{2}}d\tau^{\prime}\\
  &=\int_{-1}^{1}d\tau_1\int_{S^{r-2}}
  \frac{(\det\mathcal{B}^{\chi_{\tau^{\prime}}(\tau_1)})^{\frac{1}{2}}
  \left(\left\langle\mathcal{B}^{\chi_{\tau^{\prime}}(\tau_1)}
  \mathbf{y},\mathbf{y}\right\rangle+i|\mathbf{t}|\tau_1\right)^{m-j}(1-\tau_1^2)^{\frac{r-2}{2}}d\tau^{\prime}}
  {\left(\left\langle\mathcal{B}^{\chi_{\tau^{\prime}}(\tau_1)}\mathbf{y},\mathbf{y}\right\rangle-i |\mathbf{ t}|\tau_1\right)^{m+n+r-j}},
    \end{aligned}\end{equation*}
 where  $\tau:=(\tau_1,\tau^{\prime\prime})$ in the second identity,  we take the coordinates transformation $\tau^{\prime\prime}=\sqrt{1-\tau_1^2}\tau^{\prime}$ in the third identity, and for fixed $\tau^{\prime}\in S^{r-2}$,
 define
  \begin{equation}\begin{aligned}\label{chi}
  \chi_{\tau^{\prime}}: & D_{\varepsilon} \rightarrow \Bbb C^r, \\
   z_1 \longmapsto &\left(z_1, \sqrt{1-z_1^2}\tau^{\prime}\right),
  \end{aligned}\end{equation}
where the rectangle  $D_{\varepsilon}:=\{z_1=x+iy\in \Bbb C; x\in(-1,1), |y|<\varepsilon\}$. It is obvious that $\chi_{\tau^{\prime}}(-1,1) \subset S^{r-1}$ for fixed $\tau_0^\prime\in S^{r-2}$. Then $\chi_{\tau^{\prime}}$ is a holomorphic mapping and it is easy to see that $\chi_{\tau^{\prime}}( D_{\varepsilon}) \subset U$ for sufficiently small $\varepsilon$, where $U\subseteq \Bbb C^{r}$ is a neighborhood of $S^{r-1}$, since
   \begin{equation*}
   \left| \left(z_1,\sqrt{1-z_1^2}\tau^{\prime}\right)-\left(x_1,\sqrt{1-x_1^2}\tau^{\prime}\right)\right|\leq C |y|,
    \end{equation*}
    for some constant $C$.
    So the composition $\mathcal{B}^{\chi_{\tau^{\prime}}(z_1)}$ is holomorphic in $ z_1 \in D_{\varepsilon}$ since $\chi_{\tau^{\prime}}(D_{\varepsilon}) \subset U$ and $\mathcal{B}$ is holomorphic in $U$ for fixed $\tau^{\prime}$ by Proposition \ref{pro:B-tau-hol}. Similarly, $(\det\mathcal{B}^{\chi_{\tau^{\prime}}(z_1)})^{\frac{1}{2}}$ is also holomorphic in $z_1$. Denote
  \begin{equation}\label{fmj}
f_{m,j}(\tau^{\prime}; z_1,\mathbf{y}, \mathbf{t}):=\frac{(\det\mathcal{B}^{\chi_{\tau^{\prime}}(z_1)})^{\frac{1}{2}}\left(\langle\mathcal{B}^{\chi_{\tau^{\prime}}(z_1)}
\mathbf{y},\mathbf{y}\rangle+i|\mathbf{ t}|z_1\right)^{m-j}(1-z_1^2)^{\frac{r-2}{2}}}
  {\left(\langle\mathcal{B}^{\chi_{\tau^{\prime}}(z_1)}\mathbf{y},\mathbf{y}\rangle-i|\mathbf{ t}|z_1\right)^{m+n+r-j}}.
  \end{equation}
Let \begin{equation*}\begin{aligned}
    U_{\varepsilon}:=\{z_1=a+ib\in \Bbb C, a\in (-1,1), 0 <b< \varepsilon \sqrt{1-a^2}\}.
    \end{aligned}\end{equation*}
 Since $\mathcal{B}^{\chi_{\tau^{\prime}}(z_1)}$  is holomorphic in $z_1 \in U_{\varepsilon}$, $\mathcal{B}^{\chi_{\tau^{\prime}}(z_1)}=\mathcal{B}^{\chi_{\tau^{\prime}}(a)}+ O(b)$, then
 \begin{equation*}\begin{aligned}
 \left|\langle\mathcal{B}^{\chi_{\tau^{\prime}}(z_1)}\mathbf{y},\mathbf{y}\rangle-i|\mathbf{ t}|z_1\right|
 &= \left|\langle\mathcal{B}^{\chi_{\tau^{\prime}}(a)}\mathbf{y},\mathbf{y}\rangle
 +\langle O(b)\mathbf{y},\mathbf{y}\rangle+|\mathbf{ t}| b-i|\mathbf{ t}|a\right|\\
 &\geq C |\mathbf{y}|^2+|\mathbf{ t}|b-O(b)|\mathbf{y}|^2 >0,
  \end{aligned}\end{equation*}
for some constant $C>0$.

  The numerator of  $f_{m,j}(\tau^{\prime}; z_1,\mathbf{y}, \mathbf{t})$ is holomorphic in $z_1$, while the denominator is still holomorphic in $z_1$  for $\mathbf{y}\neq 0$.
  Hence, $f_{m,j}(\tau^{\prime}; z_1,\mathbf{y}, \mathbf{t})$ is holomorphic in $z_1$ for $\mathbf{y}\neq 0$.
    Note that
    \begin{equation}\label{gmj}
g_{m,j}(z_1; \mathbf{y}, \mathbf{t}):=\int_{S^{r-2}}f_{m,j}(\tau^{\prime}; z_1,\mathbf{y}, \mathbf{t})d\tau^{\prime},
  \end{equation}
  is holomorphic on the domain $U_{\varepsilon}$ and continuous on $\overline{U}_{\varepsilon}$.
  Apply the Cauchy theorem to get
 \begin{equation}\label{eq:ex-I-3}
 \begin{aligned}
  \mathcal{I}_{m,j}(\mathbf{y}, \mathbf{t})&=\int_{L_\varepsilon} g_{m,j}(z_1; \mathbf{y}, \mathbf{t})dz_1,
\end{aligned}\end{equation}
where
     \begin{equation*}\begin{aligned}
    L_{\varepsilon}:=\{z_1=a+ib\in \Bbb C, a\in (-1,1), b= \varepsilon \sqrt{1-a^2}\}.
    \end{aligned}\end{equation*}
    Then it follows from \eqref{eq:ex-I-2}, \eqref{fmj}-\eqref{eq:ex-I-3} that
     \begin{equation}\label{eq:ex-I-4}
 \begin{aligned}
  \mathcal{I}_{m,j}(\mathbf{y}, \mathbf{t})&=\widetilde{\mathcal{I}}_{m,j}(\mathbf{y}, \mathbf{t}),\qquad \text{for}\,\,\mathbf{y}\neq 0,
\end{aligned}\end{equation}
     with $\mathbf{t}=|\mathbf{t}|(1,0,\ldots,0)$ and $O_{\mathbf{t}}$ is the identity matrix.\par
For  $\mathbf{y}=0$, the integral
\begin{equation*}
 \begin{aligned}
\int_{L_\varepsilon} g_{m,j}(z_1; \mathbf{0}, \mathbf{t})dz_1=\int_{L_\varepsilon}dz_1\int_{S^{r-2}}
  \frac{(-1)^{m+n+r-j}(\det\mathcal{B}^{\chi_{\tau^{\prime}}(z_1)})^{\frac{1}{2}}
  (1-z_1^2)^{\frac{r-2}{2}}}
  {\left(i |\mathbf{ t}|z_1\right)^{n+r}}d\tau^{\prime}
  \end{aligned}\end{equation*}
is also well defined since the absolute value of the denominator has a positive lower bound.  Hence the integral in  \eqref{eq:ex-I-4} still make sense for $\mathbf{y}=0, \mathbf{t}\neq0$. \par

For general $\mathbf{t}\in \Bbb R^{r}\backslash \{\mathbf{0}\}$, consider the subspace that orthogonal to $\mathbf{t}$. By Schmidt orthogonalization, we see that there exists  an orthonormal basis $\varepsilon_2,\ldots,\varepsilon_r$ orthogonal to $\mathbf{t}$ which are locally smoothly depends on $\mathbf{t}$.  Let $O_{\mathbf{t}}=(\frac{\mathbf{t}}{|\mathbf{t}|},\varepsilon_2,\ldots,\varepsilon_r)$ and take coordinate changing $\tau=O_{\mathbf{t}}\omega$. Since $O_{\mathbf{t}} \mathbf{t}=|\mathbf{t}|(1,0,\ldots,0)$, then $\mathbf{t} \cdot \tau=|\mathbf{t}|\omega_1$, $d\tau=d\omega$ and
 \eqref{eq:ex-I-1} becomes
\begin{equation}\label{eq:ex-I-5}
 \begin{aligned}
  \mathcal{I}_{m,j}(\mathbf{y}, \mathbf{t})
  =&\int_{S^{r-1}}
  \frac{(\det\mathcal{B}^{O_{\mathbf{t}}\omega})^{\frac{1}{2}}\left(\langle\mathcal{B}^{O_{\mathbf{t}}\omega}\mathbf{y},\mathbf{y}\rangle+i|\mathbf{t}|\omega_1\right)^{m-j}}
  {\left(\langle\mathcal{B}^{O_{\mathbf{t}}\omega}\mathbf{y},\mathbf{y}\rangle-i |\mathbf{t}|\omega_1\right)^{m+n+r-j}} d\omega\\
  =&\int_{-1}^{1}d\omega_1\int_{\sqrt{1-\omega_1^2}S^{r-2}}
  \frac{(\det\mathcal{B}^{O_{\mathbf{t}}(\omega_1,\omega^{\prime\prime})})^{\frac{1}{2}}\left(\langle\mathcal{B}^{O_{\mathbf{t}}(\omega_1,\omega^{\prime\prime})}\mathbf{y},\mathbf{y}\rangle+i|\mathbf{t}|\omega_1\right)^{m-j}}
  {\left(\langle\mathcal{B}^{O_{\mathbf{t}}(\omega_1,\omega^{\prime\prime})}\mathbf{y},\mathbf{y}\rangle-i |\mathbf{ t}|\omega_1\right)^{m+n+r-j}}d\omega^{\prime\prime}\\
  =&\int_{-1}^{1} d\omega_1\int_{S^{r-2}}
  \frac{(\det\mathcal{B}^{O_{\mathbf{t}}(\omega_1,\sqrt{1-\omega_1^2}\omega^{\prime})})^{\frac{1}{2}}\left(\langle\mathcal{B}^{O_{\mathbf{t}}(\omega_1,\sqrt{1-\omega_1^2}\omega^{\prime})}
  \mathbf{y},\mathbf{y}\rangle+i|\mathbf{t}|\omega_1\right)^{m-j}(1-\omega_1^2)^{\frac{r-2}{2}}d\omega^{\prime}}
  {\left(\langle\mathcal{B}^{O_{\mathbf{t}}(\omega_1,\sqrt{1-\omega_1^2}\omega^{\prime})}\mathbf{y},\mathbf{y}\rangle-i |\mathbf{ t}|\omega_1\right)^{m+n+r-j}}\\
  =&\int_{-1}^{1}d\omega_1\int_{S^{r-2}}
  \frac{(\det\mathcal{B}^{O_{\mathbf{t}}\chi_{\omega^{\prime}}(\omega_1)})^{\frac{1}{2}}
  \left(\langle\mathcal{B}^{O_{\mathbf{t}}\chi_{\omega^{\prime}}(\omega_1)}
  \mathbf{y},\mathbf{y}\rangle+i|\mathbf{t}|\omega_1\right)^{m-j}(1-\omega_1^2)^{\frac{r-2}{2}}d\omega^{\prime}}
  {\left(\langle\mathcal{B}^{O_{\mathbf{t}}\chi_{\omega^{\prime}}(\omega_1)}\mathbf{y},\mathbf{y}\rangle-i |\mathbf{ t}|\omega_1\right)^{m+n+r-j}},
  \end{aligned}\end{equation}
where $\omega=(\omega_1,\omega^{\prime\prime})$ and we take the coordinates transformation $\omega^{\prime\prime}=\sqrt{1-\omega_1^2}\omega^{\prime}$ in the last identity.  $\langle\mathcal{B}^{O_{\mathbf{t}}\omega}\mathbf{y},\mathbf{y}\rangle$ and $(\det\mathcal{B}^{O_{\mathbf{t}}\omega})^{\frac{1}{2}}$  are  locally smoothly dependent on $\mathbf{t}$. Then the possible singular points of integral \eqref{eq:ex-I-5}   only appear in the denominator, so it is similar to the case $\mathbf{t}=|\mathbf{t}|(1,0,\ldots,0)$. We can also change the contour integral to get \eqref{eq:ex-I-4} holds for $\mathbf{y}\neq0$ and \eqref{eq:ex-I-2} still make sense for $\mathbf{y}=0, \mathbf{t}\neq0$. We omit details.
The theorem is proved.
\end{proof}
\begin{rem} For H-type groups, since $\mathcal{B}^{\tau}=I_{2n}$, it is not necessary to extend $\mathcal{B}^{\tau}$ to a holomorphic function and use orthogonal matrix $O_{\mathbf{t}}$. We can change contour of $(-1,1)$ directly to $L_{\varepsilon}$ to obtain
  \begin{equation*}
 \begin{aligned}
  \widetilde{\mathcal{I}}_{m,j}(\mathbf{y}, \mathbf{t})
  :=\int_{L_\varepsilon}dz\int_{S^{r-2}}
  \frac{
  \left(\langle  \mathbf{y},\mathbf{y}\rangle+i|\mathbf{t}|z\right)^{m-j}(1-z^2)^{\frac{r-2}{2}}d\tau^{\prime}}
  {\left(\langle\mathbf{y},\mathbf{y}\rangle-i |\mathbf{ t}|z\right)^{m+n+r-j}}.
  \end{aligned}\end{equation*}
\end{rem}

\section{The homogeneous distribution $\mathscr{P}_m$ }\label{section4}
\subsection{The distribution $\mathscr{P}_m$}
Recall that a distribution $\mathscr K$ on $\mathcal{N}$ is called {\it homogeneous of degree $a$} if
\begin{equation}\label{def:homogeneous}
\begin{aligned}
\left\langle \mathscr K,\phi_{\lambda}\right\rangle=&\lambda^{a} \left\langle \mathscr K,\phi\right\rangle,\,\,\,\,\,\,\text{for any}\,\, \phi\in \mathcal{S}(\mathcal{N}),
\end{aligned}
\end{equation}
where $\phi_\lambda(\mathbf{y}, \mathbf{t})=\lambda^{-Q} \phi\left(\lambda^{-1} \mathbf{y}, \lambda^{-2} \mathbf{t}\right) $. Let $\delta_\lambda(\mathbf{y},\mathbf{t})=(\lambda \mathbf{y},\lambda^2 \mathbf{t})$. Then for
  a homogeneous distribution $\mathscr{K}$ of degree $-Q$, we have
  \begin{equation}\label{homo-2}
\left\langle \mathscr{K}, \phi\circ\delta_{\lambda}\right\rangle=\left\langle \mathscr{K},\phi\right\rangle.
\end{equation}
\begin{pro}\label{pm-homo}
   $\mathscr{P}_{m}$ defined by \eqref{def-dis-pm} is a  distribution. Moreover, it is homogeneous of degree $-Q$.
\end{pro}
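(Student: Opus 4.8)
The plan is to treat the two assertions separately. For the claim that \eqref{def-dis-pm} defines a tempered distribution, the point is absolute convergence of the integral together with a seminorm bound. Since the partial Fourier transform maps $\mathcal{S}(\mathcal{N})$ continuously into itself, for each $N\in\mathbb{N}$ there is a Schwartz seminorm $\|\phi\|_{(N)}$ with $|\widetilde\phi(\mathbf{y},-\tau)|\le \|\phi\|_{(N)}(1+|\mathbf{y}|+|\tau|)^{-N}$. Using the pointwise estimate \eqref{Qm-integrable}, I would split $\mathbb{R}^{2n+r}$ into $\{|\mathbf{y}|\ge 1\}$ and $\{|\mathbf{y}|\le 1\}$. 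On $\{|\mathbf{y}|\ge 1\}$ the substitution $s=|\tau||\mathbf{y}|^2$ gives $\int_{\mathbb{R}^r}|Q_m(\mathbf{y},\tau)|\,d\tau\lesssim_m |\mathbf{y}|^{-Q}$, and since $\sup_\tau|\widetilde\phi(\mathbf{y},-\tau)|\lesssim(1+|\mathbf{y}|)^{-N}$ the remaining integral $\int_{|\mathbf{y}|\ge1}|\mathbf{y}|^{-Q}(1+|\mathbf{y}|)^{-N}\,d\mathbf{y}$ converges. On $\{|\mathbf{y}|\le 1\}$ one integrates in $\mathbf{y}$ first (justified by Tonelli, the integrand being nonnegative): the rescaling $\mathbf{z}=|\tau|^{1/2}\mathbf{y}$ absorbs the factor $|\tau|^n$ coming from \eqref{Qm-integrable} and yields $\int_{|\mathbf{y}|\le 1}|Q_m(\mathbf{y},\tau)|\,d\mathbf{y}\lesssim_m 1$ uniformly in $\tau$, so that the $\tau$-decay of $\widetilde\phi$ makes $\int_{\mathbb{R}^r}(1+|\tau|)^{-N}\,d\tau$ converge for $N>r$. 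Altogether $|\langle\mathscr{P}_m,\phi\rangle|\le C_m\|\phi\|_{(N)}$ for a suitable $N=N(m,n,r)$, which is the required continuity, and linearity is evident from the formula.

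For the homogeneity, by \eqref{homo-2} it suffices to prove $\langle\mathscr{P}_m,\phi\circ\delta_\lambda\rangle=\langle\mathscr{P}_m,\phi\rangle$ for every $\lambda>0$, where $(\phi\circ\delta_\lambda)(\mathbf{y},\mathbf{t})=\phi(\lambda\mathbf{y},\lambda^2\mathbf{t})$. A change of variable $\mathbf{s}=\lambda^2\mathbf{t}$ in the defining integral of the partial Fourier transform gives $\widetilde{(\phi\circ\delta_\lambda)}(\mathbf{y},-\tau)=\lambda^{-2r}\widetilde\phi(\lambda\mathbf{y},-\lambda^{-2}\tau)$. Substituting this into \eqref{def-dis-pm} and then applying the substitution $\mathbf{y}=\lambda^{-1}\mathbf{y}'$, $\tau=\lambda^{2}\tau'$ (Jacobian factor $\lambda^{-2n}\lambda^{2r}$) reduces the identity to the scaling property $Q_m(\lambda^{-1}\mathbf{y},\lambda^{2}\tau)=\lambda^{2n}Q_m(\mathbf{y},\tau)$, since the powers of $\lambda$ then cancel.

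To verify this last property I would use the explicit form \eqref{Qm-2}. Since $B^\tau=\sum_\beta\tau_\beta B^\beta$ is linear in $\tau$, one has $(B^{\lambda^2\tau})^tB^{\lambda^2\tau}=\lambda^4(B^\tau)^tB^\tau$ and hence $\mathcal{B}^{\lambda^2\tau}=\lambda^2\mathcal{B}^\tau$; consequently $\langle\mathcal{B}^{\lambda^2\tau}(\lambda^{-1}\mathbf{y}),\lambda^{-1}\mathbf{y}\rangle=\langle\mathcal{B}^\tau\mathbf{y},\mathbf{y}\rangle$, which leaves both the exponential factor and $L_m^{(n-1)}(2\langle\mathcal{B}^\tau\mathbf{y},\mathbf{y}\rangle)$ unchanged, while $(\det\mathcal{B}^{\lambda^2\tau})^{1/2}=\lambda^{2n}(\det\mathcal{B}^\tau)^{1/2}$ because $\mathcal{B}^\tau$ is a $2n\times 2n$ matrix. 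Collecting the powers of $\lambda$ gives $\langle\mathscr{P}_m,\phi\circ\delta_\lambda\rangle=\langle\mathscr{P}_m,\phi\rangle$, so $\mathscr{P}_m$ is homogeneous of degree $-Q$.

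I expect the main obstacle to be the convergence near $\mathbf{y}=0$: the crude estimate $\int_{\mathbb{R}^r}|Q_m(\mathbf{y},\tau)|\,d\tau\lesssim|\mathbf{y}|^{-Q}$ is not locally integrable on $\mathbb{R}^{2n}$, since $Q=2n+2r>2n$, so one cannot integrate in $\tau$ first over the unit ball in $\mathbf{y}$. The remedy is precisely to reverse the order of integration there and rescale $\mathbf{z}=|\tau|^{1/2}\mathbf{y}$, which turns the dangerous $|\tau|^n$ growth into a convergent Gaussian-type integral in $\mathbf{z}$; everything else is a routine change of variables.
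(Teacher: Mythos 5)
Your proof is correct and follows essentially the same route as the paper: the pointwise bound \eqref{Qm-integrable} combined with the Schwartz decay of $\widetilde{\phi}$ gives the seminorm estimate for continuity, and the scaling identity $Q_m(\lambda^{-1}\mathbf{y},\lambda^{2}\tau)=\lambda^{2n}Q_m(\mathbf{y},\tau)$ read off from \eqref{Qm-2} (via $\mathcal{B}^{\lambda^2\tau}=\lambda^2\mathcal{B}^{\tau}$) gives homogeneity after the same change of variables. Your handling of the convergence near $\mathbf{y}=\mathbf{0}$ — integrating in $\mathbf{y}$ first and rescaling $\mathbf{z}=|\tau|^{1/2}\mathbf{y}$ — is in fact more careful than the paper's, which simply asserts finiteness of the weighted double integral.
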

\begin{proof}
 Since $\|\widetilde{\phi}\|_{(N,0)} \leq C_N \|\phi\|_{(N,2N)}$, for some constant $C_N$ depending on $N$, by definition
$$\|\phi\|_{(k, l)}:= \sup_{(\mathbf{y}, \mathbf{t})\in \Bbb R^{2n+r}, |\mathbf{I}|=l}\left(1+|\mathbf{y}|^4+|\mathbf{t}|^2\right)^{k}|\partial^{\mathbf{I}}_{(\mathbf{y},\mathbf{t})}\phi(\mathbf{y}, \mathbf{t})|, $$
where $\partial^{\mathbf{I}}_{(\mathbf{y},\mathbf{t})}=\partial^{I_1}_{y_1}\cdots\partial^{I_{2n}}_{y_{2n}}\partial^{I_{2n+1}}_{t_1}\cdots\partial^{I_{2n+r}}_{y_r}$ and $|\mathbf{I}|=I_1+\cdots+I_{2n+r}$, it follows from the definition \eqref{def-dis-pm} of $\mathscr{P}_{m}$ and the estimate \eqref{Qm-integrable} that
\begin{equation*}\label{pm-bounded}
\begin{aligned}
\left|\left\langle\mathscr{P}_{m},\phi\right\rangle\right|&\leqslant\frac{1}{(2\pi)^r} \int_{\Bbb R^{2n+r}} |Q_m(\mathbf{y},\tau )||\widetilde{\phi}(\mathbf{y},\tau )| d\mathbf{y} d \tau \\
 &\leqslant \frac{1}{(2\pi)^r}\int_{\Bbb R^{2n+r}} |Q_m(\mathbf{y},\tau )|\frac{\|\widetilde{\phi}\|_{(N,0)}}{(1+|\mathbf{y}|^4+|\tau|^2)^{N}} d\mathbf{y} d \tau \\
& \leq C_m\|\widetilde{\phi}\|_{(N, 0)}\\
& \leq C_m C_N\|\phi\|_{(N, 2N)},
\end{aligned}
\end{equation*}
for $\phi\in \mathcal{S}(\mathcal{N})$ and sufficiently large $N$, where  $C_m$ is a constant depending on $m$. Hence,
 $\mathscr{P}_m$ is a distribution.

 Since $\phi_\lambda(\mathbf{y}, \mathbf{t})=\lambda^{-Q} \phi\left(\lambda^{-1} \mathbf{y}, \lambda^{-2} \mathbf{t}\right) $, then
\begin{equation*}
\begin{aligned}
\widetilde{\phi}_\lambda(\mathbf{y}, -\tau)
 =\lambda^{-Q} \int_{\Bbb R^r} \phi\left(\lambda^{-1} \mathbf{y}, \lambda^{-2} \mathbf{t}\right) e^{i \mathbf{t} \cdot \tau} d \mathbf{t} =\lambda^{-2 n} \widetilde{\phi}\left(\lambda^{-1} \mathbf{y}, -\lambda^{2} \tau\right).
\end{aligned}
\end{equation*}
Hence,
\begin{equation*}
\begin{aligned}
\left\langle\mathscr{P}_{m},\phi_{\lambda}\right\rangle=&\frac{1}{(2\pi)^r}\int_{\Bbb R^{2n+r}}Q_m(\mathbf{y},\tau)\widetilde{\phi}_{\lambda}(\mathbf{y},-\tau) d\mathbf{y}d\tau\\
=&\frac{1}{(2\pi)^r}\cdot\frac{2^{n}}{\pi^{n}}\int_{\Bbb R^{2n+r}} (\det\mathcal{B}^\tau)^{\frac{1}{2}}e^{-\langle\mathcal{B}^\tau\mathbf{y},\mathbf{y}\rangle } L_{m}^{(n-1)}(2\langle\mathcal{B}^\tau\mathbf{y},\mathbf{y}\rangle )\lambda^{-2 n} \widetilde{\phi}\left(\lambda^{-1} \mathbf{y}, -\lambda^{2} \tau\right)d\mathbf{y}d\tau\\
=&\frac{\lambda^{-Q}}{(2\pi)^r} \int_{\Bbb R^{2n+r}}Q_m(\mathbf{y}^{\prime},\tau^{\prime})\widetilde{\phi}(\mathbf{y}^{\prime},-\tau^{\prime}) d\mathbf{y}^{\prime}d\tau^{\prime}\\
=&\lambda^{-Q} \left\langle\mathscr{P}_{m},\phi\right\rangle,
\end{aligned}
\end{equation*}
by taking the coordinates transformation $\mathbf{y}^\prime=\lambda^{-1}\mathbf{y}, \tau^{\prime}=\lambda^{2}\tau$ in the third identity. Here $\langle\mathcal{B}^\tau\mathbf{y},\mathbf{y}\rangle$ is invariant under this transformation and $(\det\mathcal{B}^\tau)^{\frac{1}{2}}=\lambda^{-2n}(\det\mathcal{B}^{\tau^{\prime}})^{\frac{1}{2}}$.
So $\mathscr{P}_m$ is homogeneous of degree $-Q$ by \eqref{def:homogeneous}.
\end{proof}

\begin{pro}\label{pro:pm-coincide-pm}
Suppose that $\mathcal{N}$ is a non-degenerate nilpotent Lie group of step two. Then the  distribution $\mathscr{P}_m$ defined by \eqref{def-dis-pm} coincides with the Lipschitzian function $P_m$  on $\mathcal{N}\setminus \{\mathbf{0}\}$.
\end{pro}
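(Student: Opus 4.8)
The statement to establish is that for every $\phi\in C_c^\infty(\mathcal{N}\setminus\{\mathbf{0}\})$ one has $\langle\mathscr{P}_m,\phi\rangle=\int_{\mathcal{N}}P_m(\mathbf{y},\mathbf{t})\phi(\mathbf{y},\mathbf{t})\,d\mathbf{y}\,d\mathbf{t}$. The right‑hand side makes sense as an absolutely convergent integral: by Theorem~\ref{th:pm-ex} the function $P_m$ is Lipschitzian, hence bounded, on the compact set $\operatorname{supp}\phi\subset\mathcal{N}\setminus\{\mathbf{0}\}$, so $P_m\phi\in L^1(\mathcal{N})$. The plan is to verify the identity by applying Fubini's theorem slicewise in the variable $\mathbf{y}$, using the defining formula \eqref{Pm-Qm} of $P_m$ for $\mathbf{y}\neq\mathbf{0}$.

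The first step is to rewrite $\langle\mathscr{P}_m,\phi\rangle$ as an iterated integral with $\mathbf{y}$ outermost. Since $\phi$ is smooth with $\mathbf{y}$‑support contained in a bounded set, for every $N$ one has the uniform estimate $|\widetilde{\phi}(\mathbf{y},-\tau)|\le C_{\phi,N}(1+|\tau|)^{-N}$ (the slicewise form of $\|\widetilde{\phi}\|_{(N,0)}\le C_N\|\phi\|_{(N,2N)}$ used in the proof of Proposition~\ref{pm-homo}), while \eqref{Qm-integrable} gives $|Q_m(\mathbf{y},\tau)|\lesssim(1+|\tau|)^{n+m}$ for $\mathbf{y}$ in that bounded set. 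Choosing $N$ large makes $\iint|Q_m(\mathbf{y},\tau)\widetilde{\phi}(\mathbf{y},-\tau)|\,d\tau\,d\mathbf{y}<\infty$, so Tonelli gives
\begin{equation*}
\langle\mathscr{P}_m,\phi\rangle=\frac{1}{(2\pi)^r}\int_{\mathbb{R}^{2n}}\left(\int_{\mathbb{R}^r}Q_m(\mathbf{y},\tau)\widetilde{\phi}(\mathbf{y},-\tau)\,d\tau\right)d\mathbf{y}.
\end{equation*}

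The second step is to compute the inner integral for fixed $\mathbf{y}\neq\mathbf{0}$. For such $\mathbf{y}$ the function $Q_m(\mathbf{y},\cdot)$ lies in $L^1(\mathbb{R}^r)$, because the Gaussian factor $e^{-\langle\mathcal{B}^\tau\mathbf{y},\mathbf{y}\rangle}$ appearing in \eqref{Qm-2}, together with the lower bound of Lemma~\ref{lem:B-y}, dominates the polynomial growth; and writing $\widetilde{\phi}(\mathbf{y},-\tau)=\int_{\mathbb{R}^r}e^{i\tau\cdot\mathbf{t}}\phi(\mathbf{y},\mathbf{t})\,d\mathbf{t}$ with $\phi(\mathbf{y},\cdot)\in C_c^\infty(\mathbb{R}^r)$, one has $\iint|Q_m(\mathbf{y},\tau)\phi(\mathbf{y},\mathbf{t})|\,d\mathbf{t}\,d\tau=\|Q_m(\mathbf{y},\cdot)\|_{L^1}\|\phi(\mathbf{y},\cdot)\|_{L^1}<\infty$, so Fubini applies and, by \eqref{Pm-Qm},
\begin{equation*}
\int_{\mathbb{R}^r}Q_m(\mathbf{y},\tau)\widetilde{\phi}(\mathbf{y},-\tau)\,d\tau=\int_{\mathbb{R}^r}\Big(\int_{\mathbb{R}^r}Q_m(\mathbf{y},\tau)e^{i\tau\cdot\mathbf{t}}\,d\tau\Big)\phi(\mathbf{y},\mathbf{t})\,d\mathbf{t}=(2\pi)^r\int_{\mathbb{R}^r}P_m(\mathbf{y},\mathbf{t})\phi(\mathbf{y},\mathbf{t})\,d\mathbf{t}.
\end{equation*}
Because $\{\mathbf{y}=\mathbf{0}\}$ has Lebesgue measure zero in $\mathbb{R}^{2n}$, this identity holds for a.e.\ $\mathbf{y}$; integrating it in $\mathbf{y}$ — the right side being bounded by $\big(\sup_{\operatorname{supp}\phi}|P_m|\big)\,\|\phi(\mathbf{y},\cdot)\|_{L^1}$, an integrable compactly supported function of $\mathbf{y}$ — yields $\langle\mathscr{P}_m,\phi\rangle=\int_{\mathcal{N}}P_m\phi$, which is the claim.

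I expect the only delicate point to be the slice $\{\mathbf{y}=\mathbf{0}\}$: there $Q_m(\mathbf{0},\tau)$ grows like $|\tau|^n$ rather than decaying, so it fails to be integrable and the slicewise Fubini of the second step breaks down — indeed, making sense of $P_m(\mathbf{0},\mathbf{t})$ genuinely requires the analytic continuation of Theorem~\ref{th:pm-ex}. This turns out to be harmless here, however, since $\operatorname{supp}\phi$ meets this slice only in a set of $\mathbb{R}^{2n+r}$‑measure zero, so the values of $P_m$ on it are irrelevant to both integrals and the ``almost every $\mathbf{y}$'' version of the slice identity is all that is used; no separate treatment of the exceptional slice is required. (Alternatively one could decompose $\phi$ by a partition of unity into a piece supported where $|\mathbf{y}|\ge\delta$ and a piece supported where $|\mathbf{y}|<2\delta$ and $|\mathbf{t}|\ge c$, treating the latter via the contour‑deformed formula \eqref{eq:ex-pm-2}; the measure‑zero remark makes this unnecessary.)
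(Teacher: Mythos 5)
Your proposal is correct and follows essentially the same route as the paper: both arguments rest on the absolute integrability of $Q_m(\mathbf{y},\tau)\widetilde{\phi}(\mathbf{y},-\tau)$, a Fubini interchange of the $\tau$- and $\mathbf{t}$-integrations for each fixed $\mathbf{y}\neq\mathbf{0}$ using the Gaussian decay of $Q_m(\mathbf{y},\cdot)$ from Lemma \ref{lem:B-y}, and the Lipschitz continuation of Theorem \ref{th:pm-ex} to control $\int P_m\phi$ near the slice $\{\mathbf{y}=\mathbf{0}\}$. The only cosmetic difference is that the paper excises $\{|\mathbf{y}|\le\varepsilon\}$ and passes to the limit $\varepsilon\to 0^+$, whereas you invoke directly that this slice has Lebesgue measure zero; the two are interchangeable here.
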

\begin{proof}
For $\phi\in S(\mathcal{N}\setminus \{\mathbf{0}\})$, there exists $\varepsilon_0 > 0$ such that $supp  \phi \subseteq  \{(\mathbf{y},\mathbf{t}): |(\mathbf{y},\mathbf{t})|>\epsilon_{0}\}$.
Since $\phi$ is a Schwartz function on variable $(\mathbf{y}, \mathbf{t})$, $\widetilde{\phi}$ also decays fast. $L^{(n-1)}_m$ is a polynomial of degree $m$, then it follows from the expression \eqref{Qm-2} and Lemma \ref{lem:B-y} that for any given $N\in \Bbb N$, there exists a constant $C_N>0$, such that
\begin{equation}\label{esti-I1}
\begin{aligned}
|Q_m(\mathbf{y},\tau)\widetilde{\phi}(\mathbf{y},-\tau)|\leq  \frac{C_N|\tau|^{n}(|\mathbf{\tau}||\mathbf{y}|^{2}+1)^m}{1+(|\mathbf{y}|^{4}+|\mathbf{\tau}|^2)^{N}},
\end{aligned}
\end{equation}
almost everywhere. \par
If $ |\mathbf{y}|\neq 0$, since $B^\tau$ is non degenerate, it follows from  Lemma \ref{lem:B-y} that there exists a constant $C>0$, such that
\begin{equation*}\label{esti-I2}
\begin{aligned}
&\left|Q_m(\mathbf{y},\tau)\phi(\mathbf{y},\mathbf{t})e^{i\mathbf{t}\cdot \tau}\right|
\lesssim e^{-C^{-1}|\tau||\mathbf{y}|^{2}}\frac{|\tau|^{n}(|\mathbf{\tau}||\mathbf{y}|^{2}+1)^m}{1+(|\mathbf{y}|^{4}+|\mathbf{t}|^2)^{N}},
\end{aligned}
\end{equation*}
for sufficient large $N$ almost everywhere on $\Bbb R^{2n}\times \Bbb R^{r}\times\Bbb R^{r}$. Consequently, it is  integrable in $(\mathbf{y},\mathbf{t},\tau)\in B_\varepsilon^c \times \Bbb R^r \times \Bbb R^r$, where $B_\varepsilon^c=\{\mathbf{y}: |\mathbf{y}|>\varepsilon\}$.  By using Fubini's theorem, we get
\begin{equation*}\label{change-inte}
\begin{aligned}
\int_{B_\varepsilon^c\times \Bbb R^r}\phi(\mathbf{y},\mathbf{t})P_m(\mathbf{y},\mathbf{t}) d\mathbf{y}d\mathbf{t}=&
\frac{1}{(2\pi)^r}\int_{B_\varepsilon^c\times \Bbb R^r}\phi(\mathbf{y},\mathbf{t})d\mathbf{t}\int_{\Bbb R^r}Q_m(\mathbf{y},\tau)e^{i\mathbf{t}\cdot \tau} d\mathbf{y}d\tau\\
=&\frac{1}{(2\pi)^r}\int_{B_\varepsilon^c\times \Bbb R^r}Q_m(\mathbf{y},\tau)d\tau\int_{\Bbb R^r}\phi(\mathbf{y},\mathbf{t})e^{i\mathbf{t}\cdot \tau}  d\mathbf{y} d\mathbf{t}\\
=&\frac{1}{(2\pi)^r}\int_{B_\varepsilon^c\times \Bbb R^r}Q_m(\mathbf{y},\tau)\widetilde{\phi}(\mathbf{y},-\tau) d\mathbf{y}d\tau,
\end{aligned}
\end{equation*}
by using \eqref{Pm-Qm} in the first identity. Since $P_m$ is Lipschitzian on $\mathcal{N}\backslash \{\mathbf{0}\}$ by its expression \eqref{eq:ex-pm-2}-\eqref{eq:ex-I-2} in Theorem \ref{th:pm-ex}, we see that
\begin{equation*}
\begin{aligned}
\int_{\Bbb R^{2n+r}}\phi(\mathbf{y},\mathbf{t})P_m(\mathbf{y},\mathbf{t}) d\mathbf{y}d\mathbf{t}=&\lim_{\varepsilon\rightarrow 0}
\int_{B_\varepsilon^c\times \Bbb R^r}\phi(\mathbf{y},\mathbf{t})P_m(\mathbf{y},\mathbf{t}) d\mathbf{y} d\mathbf{t}\\
=&\lim_{\varepsilon\rightarrow 0}\frac{1}{(2\pi)^r}\int_{B_\varepsilon^c\times \Bbb R^r}Q_m(\mathbf{y},\tau)\widetilde{\phi}(\mathbf{y},-\tau) d\mathbf{y}d\tau\\
=&\frac{1}{(2\pi)^r}\int_{\Bbb R^{2n+r}}Q_m(\mathbf{y},\tau)\widetilde{\phi}(\mathbf{y},-\tau) d\mathbf{y}d\tau\\
=&\left\langle\mathscr{P}_m,\phi\right\rangle.
\end{aligned}
\end{equation*}
The third identity follows from $Q_m\widetilde{\phi}\in L^1(\mathcal{N})$ by \eqref{esti-I1} and $\phi\in S(\mathcal{N}\setminus \{\mathbf{0}\})$.
 So $\mathscr{P}_m$ coincides with the  function $P_m$ on $\mathcal{N}\setminus \{\mathbf{0}\}$.
\end{proof}

We need the following characterization of  homogeneous distribution on a homogeneous group given by Folland-Stein \cite{FS1982}.

\begin{pro}\label{expre-dis}\cite[Proposition 6.13]{FS1982}
  Let $K$ be a continuous homogeneous function of degree $-Q$ on a homogeneous group  $G \setminus \{\mathbf{0}\}$ and satisfies
  \begin{equation}\label{mu-k}
  \int_{\partial B(\mathbf{0},1)}K(\mathbf{y},\mathbf{t})d\mathbf{y}d\mathbf{t}=0.
\end{equation}
  Then the formula
\begin{equation*}
  \langle p.v. K, \phi\rangle=\lim_{\epsilon\rightarrow 0}\int_{\Bbb R^{2n+r}\setminus B(\mathbf{0},\epsilon)}K(\mathbf{y},\mathbf{t})\phi(\mathbf{y},\mathbf{t})d\mathbf{y}d\mathbf{t}
\end{equation*}
for any $\phi\in \mathscr{D}(G)$, defines a tempered distribution $p.v. K$ which is homogeneous of degree $-Q$.\par Conversely, suppose $\mathscr{K}$ is a tempered distribution which is homogeneous of degree $-Q$ and whose restriction to $G \setminus \{\mathbf{0}\}$ is a continuous function $K$. Then \eqref{mu-k} holds and
\begin{equation*}
 \mathscr{K} = p.v. K+C\delta_{\mathbf{0}},
\end{equation*}
for some constant $C$, where $\delta_{\mathbf{0}}$ is the delta function  at $\mathbf{0}$.
\end{pro}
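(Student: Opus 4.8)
The statement is the Folland--Stein characterization of homogeneous distributions, so the plan is to reprove it in the concrete setting of $\mathcal N=\Bbb R^{2n}\times\Bbb R^r$ equipped with the dilations $\delta_\lambda(\mathbf y,\mathbf t)=(\lambda\mathbf y,\lambda^2\mathbf t)$, homogeneous dimension $Q=2n+2r$, and homogeneous norm $\|(\mathbf y,\mathbf t)\|=(|\mathbf y|^4+|\mathbf t|^2)^{1/4}$. The three tools are: the polar-coordinate formula $\int_{\mathcal N}f=\int_0^\infty\!\big(\int_{\partial B(\mathbf 0,1)}f(\delta_\rho\omega)\,d\sigma(\omega)\big)\rho^{Q-1}\,d\rho$ for a fixed surface measure $\sigma$; the bound $|K(x)|\le C\|x\|^{-Q}$ coming from homogeneity together with continuity of $K$ on the compact set $\partial B(\mathbf 0,1)$; and the estimate $|\phi(x)-\phi(\mathbf 0)|\le C_\phi\|x\|$ for $\|x\|\le1$ and $\phi\in\mathcal S(\mathcal N)$, valid because every coordinate of $\mathcal N$ scales with dilation weight $\ge1$, so $|x|_{\mathrm{Euclid}}\lesssim\|x\|$ near $\mathbf 0$.

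For the direct half I would show the limit defining $\langle p.v.K,\phi\rangle$ exists and is dominated by finitely many Schwartz seminorms of $\phi$. Split the integral at $\|x\|=1$. On $\{\|x\|>1\}$ the integrand is bounded by $C\|x\|^{-Q}|\phi(x)|$, which is absolutely integrable. On $\{\varepsilon<\|x\|<1\}$ write $\phi(x)=\phi(\mathbf 0)+(\phi(x)-\phi(\mathbf 0))$; the constant term contributes $\phi(\mathbf 0)\int_{\varepsilon<\|x\|<1}K(x)\,dx$, which by polar coordinates equals $\phi(\mathbf 0)\big(\int_\varepsilon^1\rho^{-1}d\rho\big)\big(\int_{\partial B(\mathbf 0,1)}K\,d\sigma\big)=0$ by the mean-value hypothesis \eqref{mu-k}, and the remaining term is bounded by $C\|x\|^{-Q+1}$, whose polar integrand is $O(1)$ and hence integrable near $\mathbf 0$, with constant controlled by $\sup_{\|x\|\le1}|\nabla\phi|$. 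Thus $p.v.K$ is a well-defined tempered distribution, it agrees with the function $K$ on $\mathcal N\setminus\{\mathbf 0\}$ (test functions supported there give $\langle p.v.K,\phi\rangle=\int K\phi$), and it is homogeneous of degree $-Q$: substituting $y=\delta_\lambda x$ in $\langle p.v.K,\phi\circ\delta_\lambda\rangle=\lim_\varepsilon\int_{\|x\|>\varepsilon}K(x)\phi(\delta_\lambda x)\,dx$ and using $dx=\lambda^{-Q}dy$, $K(\delta_{\lambda^{-1}}y)=\lambda^QK(y)$, the factors cancel and one obtains $\langle p.v.K,\phi\circ\delta_\lambda\rangle=\langle p.v.K,\phi\rangle$, i.e.\ \eqref{homo-2}.

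For the converse, the main point is that the mean value \eqref{mu-k} is automatic. Fix $\eta\in\mathscr D(\mathcal N)$ with $\eta\equiv1$ on $B(\mathbf 0,1)$ and $\mathrm{supp}\,\eta\subset B(\mathbf 0,2)$, and for $0<\varepsilon<\tfrac12$ put $\eta_\varepsilon:=\eta-\eta\circ\delta_{1/\varepsilon}$. Then $\eta_\varepsilon\in\mathscr D(\mathcal N)$ vanishes on $B(\mathbf 0,\varepsilon)$, so $\langle\mathscr K,\eta_\varepsilon\rangle=\int_{\mathcal N}K\eta_\varepsilon$; on the other hand homogeneity of $\mathscr K$ gives $\langle\mathscr K,\eta\circ\delta_{1/\varepsilon}\rangle=\langle\mathscr K,\eta\rangle$, whence $\langle\mathscr K,\eta_\varepsilon\rangle=0$. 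Computing $\int_{\mathcal N}K\eta_\varepsilon$ shell by shell: on $\{\varepsilon\le\|x\|\le2\varepsilon\}$ the substitution $y=\delta_{1/\varepsilon}x$ together with $K(\delta_\varepsilon y)=\varepsilon^{-Q}K(y)$ shows the contribution is $\int_{1\le\|y\|\le2}K(y)(1-\eta(y))\,dy$, a constant independent of $\varepsilon$; on $\{2\varepsilon\le\|x\|\le1\}$ one has $\eta_\varepsilon\equiv1$ and polar coordinates give $\int_{2\varepsilon\le\|x\|\le1}K\,dx=\big(\int_{\partial B(\mathbf 0,1)}K\,d\sigma\big)\log\frac1{2\varepsilon}$; and on $\{1\le\|x\|\le2\}$ the contribution is a bounded constant. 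Adding up, $0=\big(\int_{\partial B(\mathbf 0,1)}K\,d\sigma\big)\log\frac1\varepsilon+O(1)$ as $\varepsilon\to0^+$, which forces $\int_{\partial B(\mathbf 0,1)}K\,d\sigma=0$. Now the first half applies: $p.v.K$ is defined and homogeneous of degree $-Q$, so $\mathscr L:=\mathscr K-p.v.K$ is a tempered distribution, homogeneous of degree $-Q$, whose restriction to $\mathcal N\setminus\{\mathbf 0\}$ vanishes; hence $\mathrm{supp}\,\mathscr L\subseteq\{\mathbf 0\}$ and $\mathscr L=\sum_{|\alpha|\le N}c_\alpha\partial^\alpha\delta_{\mathbf 0}$. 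Since $\partial^\alpha\delta_{\mathbf 0}$ is homogeneous of degree $-Q-d(\alpha)$ with $d(\alpha)=\alpha_1+\cdots+\alpha_{2n}+2(\alpha_{2n+1}+\cdots+\alpha_{2n+r})\ge1$ unless $\alpha=0$, comparing homogeneity degrees forces $c_\alpha=0$ for $\alpha\neq0$, so $\mathscr L=C\delta_{\mathbf 0}$ and $\mathscr K=p.v.K+C\delta_{\mathbf 0}$.

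The step I expect to be the crux is the rigidity argument in the converse: recognizing $\int_{\partial B(\mathbf 0,1)}K\,d\sigma$ as the coefficient of the logarithmic divergence in $\langle\mathscr K,\eta_\varepsilon\rangle$ and concluding that it must vanish, which rests on the clean splitting into the three shells $\{\|x\|\le\varepsilon\}$, $\{\varepsilon\le\|x\|\le2\varepsilon\}$ (absorbed by rescaling) and $\{\|x\|\ge2\varepsilon\}$. Everything else is routine once the polar-coordinate formula and the estimate $|\phi(x)-\phi(\mathbf 0)|\lesssim\|x\|$ near $\mathbf 0$ are available, and the classification of distributions supported at a point as finite combinations of $\partial^\alpha\delta_{\mathbf 0}$ is a standard fact (Schwartz).
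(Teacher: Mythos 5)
Your proof is correct. Be aware, though, that the paper offers no proof of Proposition~\ref{expre-dis} at all: it is quoted as \cite[Proposition 6.13]{FS1982}, so there is no in-paper argument to match. The closest thing the authors actually prove is the variant Proposition~\ref{expre-dis-2}, in which the homogeneous ball $B(\mathbf{0},\varepsilon)$ is replaced by the slab $\{|\mathbf{t}|\le\varepsilon\}$ and the mean-value condition \eqref{mu-k-2} is taken over $\Bbb R^{2n}\times S^{r-1}$; comparing your converse with that proof is instructive. Both arguments rest on the same mechanism --- dilation invariance forces the coefficient of a logarithmic divergence to vanish --- but they package it differently. The paper introduces the regularized functional $\mathscr{F}$ (subtracting $\phi(\mathbf{0})$ on the inner region), notes that $\mathscr{F}-\mathscr{K}$ is supported at the origin so that $\langle\mathscr{F},\phi\circ\delta_\lambda\rangle-\langle\mathscr{F},\phi\rangle$ stays bounded as $\lambda\to 0$, and then computes this difference explicitly to be $-\phi(\mathbf{0})\,\mu_{K}\log\lambda^2$. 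You instead test $\mathscr{K}$ against $\eta-\eta\circ\delta_{1/\varepsilon}$, which is supported in a compact annulus away from $\mathbf{0}$, so the identity $\langle\mathscr{K},\eta-\eta\circ\delta_{1/\varepsilon}\rangle=0$ is exact and the logarithm emerges from the shell $\{2\varepsilon\le\|x\|\le 1\}$ via polar coordinates. Your packaging has two small advantages: it never requires knowing in advance that a regularized inner integral converges (the paper's $\mathscr{F}$, and indeed its use of $\widehat{p.v.}K$ in the converse, presuppose facts that are only justified once $\mu_{K}=0$ is known), and it exploits the polar-coordinate formula adapted to $\|\cdot\|$, which is exactly what is available for the homogeneous-ball principal value of Proposition~\ref{expre-dis}; the slab version must replace this by the separate scaling computation \eqref{homo-r}. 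The remaining ingredients of your argument --- the $O(\|x\|^{-Q+1})$ bound after subtracting $\phi(\mathbf{0})$, the classification of distributions supported at a point, and the weight count $d(\mathbf{I})\ge 1$ for $\mathbf{I}\ne 0$ --- coincide with what the paper uses elsewhere, so I see no gap.
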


\emph{Proof of Theorem \ref{pf=f+pvf}.}  By Theorem \ref{th:pm-ex}, $P_m$ is a continuous homogeneous function of degree $-Q$ on $\mathcal{N}\setminus \{\mathbf{0}\}$. By Proposition \ref{pm-homo} and Proposition \ref{pro:pm-coincide-pm}, the distribution $\mathscr{P}_m$ is homogeneous of degree $-Q$ and coincide with  Lipschitzian function  $P_m$ on  $\mathcal{N}\backslash \{\mathbf{0}\}$.
Hence, the distribution $\mathscr{P}_m$ has the expression \eqref{eq:pf=f+pvf} and $P_m$ satisfies \eqref{mu-k=0} by Proposition \ref{expre-dis}.
The theorem is proved.\qed
\vskip 5mm

\subsection{$P_m$ is a  Calder\'{o}n-Zygmund kernel}
For a fixed point $(\mathbf{x}, \mathbf{t}) \in \mathcal{N}$, the left multiplication by $(\mathbf{x}, \mathbf{t})$ is an affine transformation of $\mathbb{R}^{2 n+r}$:
$$
\mathbf{y} \mapsto \mathbf{y}+\mathbf{x}, \quad \mathbf{s} \mapsto \mathbf{s}+\mathbf{t}+2 B(\mathbf{x}, \mathbf{y}),
$$
which preserves the Lebegues measure $d \mathbf{y} d \mathbf{s}$ of $\mathbb{R}^{2 n+r}$. The measure $d \mathbf{y} d \mathbf{s}$ is also right invariant, and so it is a Haar measure on the nilpotent Lie group $\mathcal{N}$ of step two. The convolution on $\mathcal{N}$ is defined as
\begin{equation*}
\varphi * \psi(\mathbf{y}, \mathbf{s}):=\int_{\mathcal{N}} \varphi(\mathbf{x}, \mathbf{t}) \psi\left((\mathbf{x}, \mathbf{t})^{-1}(\mathbf{y}, \mathbf{s})\right) d \mathbf{x} d \mathbf{t}=\int_{\mathcal{N}} \varphi\left((\mathbf{y}, \mathbf{s})(\mathbf{x}, \mathbf{t})^{-1}\right) \psi(\mathbf{x}, \mathbf{t}) d \mathbf{x} d \mathbf{t},
\end{equation*}
for $\phi, \psi \in L^1(\mathcal{N})$.
For $\phi\in \mathcal{S}(\mathcal{N})$, the convolution of $\phi$ and the distribution $\mathscr{P}_m$ is defined as
\begin{equation*}\label{def-con-pm}
\phi \ast\mathscr{P}_m(\mathbf{y},\mathbf{t}):=\left\langle \mathscr{P}_m, \phi_{(\mathbf{y},\mathbf{t})}\right\rangle,
\end{equation*}
where $\phi_{(\mathbf{y},\mathbf{t})}(\mathbf{y^{\prime}},\mathbf{t^{\prime}})=\phi((\mathbf{y},\mathbf{t})(\mathbf{y^{\prime}},\mathbf{t^{\prime}})^{-1})$.
Let $\phi_1(\mathbf{y^{\prime}},\mathbf{t^{\prime}})=\phi_{(\mathbf{y},\mathbf{t})}(\mathbf{y^{\prime}},-\mathbf{t^{\prime}})$. By the definition of the distribution $\mathscr P_m$, we have
\begin{equation*}\begin{aligned}
\phi\ast\mathscr{P}_m(\mathbf{y},\mathbf{t})=&\frac{1}{(2\pi)^r}\int_{\Bbb R^{2n+r}} Q_m(\mathbf{y^{\prime}},\tau)\widetilde{\phi}_{(\mathbf{y},\mathbf{t})}(\mathbf{y^{\prime}},-\tau)d\mathbf{y^{\prime}}d\tau\\
=&\frac{1}{(2\pi)^r}\int_{\Bbb R^{2n+r}} Q_m(\mathbf{y^{\prime}},\tau)\widetilde{\phi}_{1}(\mathbf{y^{\prime}},\tau)d\mathbf{y^{\prime}}d\tau,
\end{aligned}\end{equation*}
since
$$\phi_1(\mathbf{y^{\prime}},\mathbf{t^{\prime}})
=\phi((\mathbf{y},\mathbf{t})(\mathbf{y^{\prime}},-\mathbf{t^{\prime}})^{-1})
=\phi((\mathbf{y},\mathbf{t})(-\mathbf{y^{\prime}},\mathbf{t^{\prime}})),$$
then
\begin{equation*}\label{tilde-phi1}\begin{aligned}
\widetilde{\phi}_1(\mathbf{y^{\prime}},\tau)&= \int_{\Bbb R^{r}} \phi(\mathbf{y}-\mathbf{y^{\prime}}, \mathbf{t}+\mathbf{t}^{\prime}-2B(\mathbf{y},\mathbf{y^{\prime}}))e^{-i\mathbf{t}^{\prime} \cdot \tau}d\mathbf{t}^{\prime}\\
&=\int_{\Bbb R^{r}} \phi(\mathbf{y}-\mathbf{y^{\prime}}, \mathbf{u})e^{-i(\mathbf{u}-\mathbf{t}+2B(\mathbf{y},\mathbf{y^{\prime}})) \cdot \tau}d\mathbf{u}\\
&=\widetilde{\phi}(\mathbf{y}-\mathbf{y^{\prime}}, \tau)e^{i\mathbf{t}\cdot \tau-2iB(\mathbf{y},\mathbf{y^{\prime}}) \cdot \tau},
\end{aligned}\end{equation*}
by taking transformation $\mathbf{u}=\mathbf{t}+\mathbf{t}^{\prime}-2B(\mathbf{y},\mathbf{y^{\prime}})$ in the second identity and
 \begin{equation}\label{con-pm-2}\begin{aligned}
\widetilde{\phi}_{(\mathbf{y},\mathbf{t})}(\mathbf{y^{\prime}},-\tau)&=\int_{\Bbb R^{r}} \phi_{(\mathbf{y},\mathbf{t})}(\mathbf{y^{\prime}},\mathbf{t^{\prime}})e^{i\mathbf{t}^{\prime} \cdot \tau}d\mathbf{t}^{\prime}\\
&=\int_{\Bbb R^{r}} \phi(\mathbf{y}-\mathbf{y^{\prime}}, \mathbf{t}-\mathbf{t}^{\prime}-2B(\mathbf{y},\mathbf{y^{\prime}}))e^{i\mathbf{t}^{\prime} \cdot \tau}d\mathbf{t}^{\prime}\\
&=\int_{\Bbb R^{r}} \phi(\mathbf{y}-\mathbf{y^{\prime}}, \mathbf{u})e^{i(\mathbf{t}-\mathbf{u}-2B(\mathbf{y},\mathbf{y^{\prime}})) \cdot \tau}d\mathbf{u}\\
&=\widetilde{\phi}(\mathbf{y}-\mathbf{y^{\prime}}, \tau)e^{i\mathbf{t}\cdot \tau-2iB(\mathbf{y},\mathbf{y^{\prime}}) \cdot \tau}=\widetilde{\phi}_1(\mathbf{y^{\prime}},\tau),
\end{aligned}\end{equation}
where we  take transformation $\mathbf{u}=\mathbf{t}-\mathbf{t}^{\prime}-2B(\mathbf{y},\mathbf{y^{\prime}})$ in the third identity.

Recall that an absolutely continuous curve $\gamma:[0,1] \rightarrow \mathcal{N}$ is horizontal if its tangent vectors $\dot{\gamma}(s), s \in[0,1]$, lie
in the horizontal tangent space $H_{\gamma(s)}$. Any given two points $\mathbf{p}, \mathbf{q} \in \mathcal{N}$ can be connected by a horizontal curve.
The Carnot-Carath\'{e}odory metric on $\mathcal{N}$ is defined as follows:
\begin{equation}\label{def:dcc}
d_{c c}(\mathbf{g},\mathbf{ h}):=\inf _\gamma \int_0^1\langle\dot{\gamma}(s), \dot{\gamma}(s)\rangle_H^{1 / 2} \mathrm{~d} s \quad \text { for } \mathbf{g}, \mathbf{h} \in \mathcal{N},
\end{equation}
where $\gamma:[0,1] \rightarrow \mathcal{N}$ is a horizontal Lipschitzian curve with $\gamma(0)=\mathbf{g}, \gamma(1)=\mathbf{h}$ and $\langle \cdot, \cdot\rangle_H$ is the standard Hermitian inner product on $\Bbb C^{2n}$.
In order to  proving  Theorem \ref{thm:con-Pm-bounded}, we need a mean value theorem on stratified
 groups. Note that there is a version  \cite[(1.41)]{FS1982} for $C^1$ functions, but  the kernel $P_m$ of the spectrum projection operator $\mathbb{P}_m$ is only Lipschitzian and it has singularity at the origin.
  We now need the following mean value theorem on $\mathcal{N}$ adapted to our kernel $P_m$.
\begin{lem}\label{lem:mean value} Let $K$ be a Lipschitzian function on $\mathcal{N} \backslash\{\mathbf{0}\}$ and $Y_j$'s are left invariant vector fields on $\mathcal{N}$ defined by \eqref{eq:Y}. There are $C>0$ and $c_0 \in(0,1)$ such that for $\mathbf{g}, \mathbf{g}_0 \in \mathcal{N} \backslash\{\mathbf{0}\}$ with $d_{c c}\left(\mathbf{g}, \mathbf{g}_0\right)<c_0 d_{c c}\left(\mathbf{g}_0, \mathbf{0}\right)$, we have
$$
\left|K(\mathbf{g})-K\left(\mathbf{g}_0\right)\right| \leq C d_{c c}\left(\mathbf{g}, \mathbf{g}_0\right) \times \max _{\substack{1 \leq j \leq 2n \\ \mathbf{u}:\, d_{c c}(\mathbf{u}, \mathbf{0}) \leq d_{c c}\left(\mathbf{g},\mathbf{g}_0\right)}}\left|Y_j K\left(\mathbf{g}_0 \cdot \mathbf{u}\right)\right|.
$$
\end{lem}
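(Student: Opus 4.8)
The plan is to adapt the classical stratified-group mean value theorem (as in \cite[(1.41)]{FS1982}) to a merely Lipschitzian kernel with a singularity at $\mathbf{0}$, the point being that as long as the segment joining $\mathbf{g}_0$ to $\mathbf{g}$ stays uniformly away from the origin, the a.e.\ defined horizontal derivatives $Y_jK$ are locally bounded along that segment and one may integrate them. First I would fix a horizontal curve realizing (up to a constant) the Carnot–Carath\'eodory distance $d_{cc}(\mathbf{g},\mathbf{g}_0)$; by left invariance of both the metric and the vector fields $Y_j$, write $\mathbf{g}=\mathbf{g}_0\cdot\mathbf{v}$ with $d_{cc}(\mathbf{v},\mathbf{0})=d_{cc}(\mathbf{g},\mathbf{g}_0)=:\rho$, and join $\mathbf{0}$ to $\mathbf{v}$ by a horizontal Lipschitzian curve $\gamma:[0,1]\to\mathcal{N}$ with $\gamma(0)=\mathbf{0}$, $\gamma(1)=\mathbf{v}$, and length $\le C\rho$, so that every point $\gamma(s)$ satisfies $d_{cc}(\gamma(s),\mathbf{0})\le C\rho$. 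Then $\sigma(s):=\mathbf{g}_0\cdot\gamma(s)$ is a horizontal curve from $\mathbf{g}_0$ to $\mathbf{g}$, and the smallness hypothesis $d_{cc}(\mathbf{g},\mathbf{g}_0)<c_0\,d_{cc}(\mathbf{g}_0,\mathbf{0})$ guarantees, for $c_0$ small enough, that $d_{cc}(\sigma(s),\mathbf{0})\ge (1-Cc_0)\,d_{cc}(\mathbf{g}_0,\mathbf{0})>0$ for all $s$, so $\sigma$ stays in $\mathcal{N}\setminus\{\mathbf{0}\}$ where $K$ is Lipschitz.

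Next I would justify the fundamental theorem of calculus along $\sigma$. Writing $\dot\gamma(s)=\sum_{j=1}^{2n}a_j(s)\,Y_j|_{\gamma(s)}$ with $\sum_j\int_0^1|a_j(s)|\,ds\le C\rho$ (this is exactly what a near-geodesic horizontal curve provides), the left-invariance gives $\dot\sigma(s)=\sum_j a_j(s)\,Y_j|_{\sigma(s)}$. Since $K$ is locally Lipschitz on the compact tube $\{\sigma(s):s\in[0,1]\}$, which is contained in $\mathcal{N}\setminus\{\mathbf{0}\}$, the composition $s\mapsto K(\sigma(s))$ is absolutely continuous and
\begin{equation*}
K(\mathbf{g})-K(\mathbf{g}_0)=\int_0^1\frac{d}{ds}K(\sigma(s))\,ds=\int_0^1\sum_{j=1}^{2n}a_j(s)\,(Y_jK)(\sigma(s))\,ds
\end{equation*}
for almost every choice of curve (or, more carefully, after a standard mollification/approximation argument since $Y_jK$ exists only a.e.). Taking absolute values and using $\sum_j\int_0^1|a_j(s)|\,ds\le C\rho=C\,d_{cc}(\mathbf{g},\mathbf{g}_0)$ together with the bound $|(Y_jK)(\sigma(s))|=|(Y_jK)(\mathbf{g}_0\cdot\gamma(s))|\le \max_{j,\;d_{cc}(\mathbf{u},\mathbf{0})\le d_{cc}(\mathbf{g},\mathbf{g}_0)}|Y_jK(\mathbf{g}_0\cdot\mathbf{u})|$ yields the asserted inequality.

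The main obstacle is the almost-everywhere nature of the derivatives: for a genuinely $C^1$ function the display above is immediate, but for a Lipschitz function one must ensure the chosen horizontal curve $\sigma$ does not concentrate on a set where $Y_jK$ fails to be defined. I would handle this the usual way: approximate $K$ by smooth functions $K_\eta=K*\rho_\eta$ (convolution on the group with an approximate identity supported in a small ball), for which the estimate holds with $Y_jK_\eta$ in place of $Y_jK$ and with constants uniform in $\eta$ because $\|Y_jK_\eta\|_{L^\infty(V)}\le \mathrm{Lip}(K;V')$ on compact sets $V\Subset V'\Subset\mathcal{N}\setminus\{\mathbf{0}\}$; then pass to the limit $\eta\to0^+$, using that $K_\eta\to K$ uniformly on the relevant tube and that $Y_jK_\eta\to Y_jK$ in $L^1_{\mathrm{loc}}$ and hence, along a subsequence, pointwise a.e., while the $\max$ on the right side is controlled by $\limsup_\eta\|Y_jK_\eta\|_{L^\infty}$, which is dominated by the stated maximum of $|Y_jK|$ (interpreted as an essential supremum). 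This reduces everything to the smooth case, where the computation is the routine one sketched above, and the constants $C$ and $c_0$ depend only on the structure constants of $\mathcal{N}$ (equivalently, on the $B^\beta_{kl}$) and not on $K$.
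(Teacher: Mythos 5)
The paper states Lemma~\ref{lem:mean value} but never proves it: the text passes directly from the statement to the definition of the homogeneous norm and then to the proof of Theorem~\ref{thm:con-Pm-bounded}, which merely invokes the lemma. So there is no in-paper argument to compare against, and your proposal supplies exactly the argument the authors evidently have in mind (the Folland--Stein mean value inequality \cite[(1.41)]{FS1982} transported along a left-translated horizontal near-geodesic, with a mollification step to handle the fact that $K$ is only Lipschitzian). Your outline is essentially correct: the reduction $\mathbf{g}=\mathbf{g}_0\cdot\mathbf{v}$, the left-invariance identity $\dot\sigma(s)=\sum_j a_j(s)Y_j|_{\sigma(s)}$, the lower bound $d_{cc}(\sigma(s),\mathbf{0})\geq(1-Cc_0)d_{cc}(\mathbf{g}_0,\mathbf{0})$ coming from the genuine triangle inequality for $d_{cc}$, and the absolute continuity of $s\mapsto K(\sigma(s))$ are all sound.

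Three small points are worth tightening. First, to obtain the maximum over $\{\mathbf{u}: d_{cc}(\mathbf{u},\mathbf{0})\leq d_{cc}(\mathbf{g},\mathbf{g}_0)\}$ rather than over a ball of radius $C\,d_{cc}(\mathbf{g},\mathbf{g}_0)$, you should use an actual CC geodesic (these exist on a Carnot group by completeness and the Hopf--Rinow theorem for length spaces), so that $d_{cc}(\gamma(s),\mathbf{0})\leq\rho$ for every $s$; with only a $(1+\epsilon)$-quasi-geodesic the maximum is taken over a dilated ball, which is still sufficient for the application in the proof of Theorem~\ref{thm:con-Pm-bounded} but is not literally the stated inequality. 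Second, in the mollification step you should convolve on the side compatible with left-invariance: with the paper's convolution one has $Y_j(\varphi*\psi)=\varphi*(Y_j\psi)$, so the convenient choice is $K_\eta:=\rho_\eta*K$, giving $Y_jK_\eta=\rho_\eta*(Y_jK)$ and hence $|Y_jK_\eta(\mathbf{g})|\leq \operatorname*{ess\,sup}_{B(\mathbf{g},\delta(\eta))}|Y_jK|$ with $\delta(\eta)\to0$; your choice $K*\rho_\eta$ requires an extra integration-by-parts to relate $K*(Y_j\rho_\eta)$ back to the Lipschitz constant of $K$. Third, the limit $\eta\to0$ only yields the essential supremum of $|Y_jK|$ over a slightly enlarged ball, so for a general Lipschitzian $K$ the conclusion should be read with an essential supremum (as you note); for the kernel $K=P_m$ the horizontal derivatives are computed explicitly and are continuous on $\mathcal{N}\setminus\{\mathbf{0}\}$, so the pointwise maximum in the statement is recovered and none of these refinements affects the application.
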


For any $\mathbf{g}=(\mathbf{y}, \mathbf{t}) \in \mathcal{N}$, the homogeneous norm of $\mathbf{g}$ is defined by
$$\|\mathbf{g}\|=\left(|\mathbf{y}|^4+|\mathbf{t}|^2\right)^{1 / 4}.$$
Obviously, $\left\|\mathbf{g}^{-1}\right\|=\|\mathbf{g}\|$ and $\left\|\delta_\lambda(\mathbf{g})\right\|=\lambda\|\mathbf{g}\|$, where $\delta_\lambda, \lambda>0$, is the dilation on $\mathcal{N}$, which is defined as $\delta_\lambda( \mathbf{y}, \mathbf{t})=\left(\lambda\mathbf{y}, \lambda^2\mathbf{t}\right)$.
On $\mathcal{N}$, we define the quasi-distance
$$\rho(\mathbf{h}, \mathbf{g})=\left\|\mathbf{g}^{-1} \cdot\mathbf{ h}\right\|.$$
It is clear that $\rho$ is symmetric and satisfies the generalized triangle inequality
\begin{equation}\label{tri-inequ}
\rho(\mathbf{h}, \mathbf{g}) \leq C_\rho(\rho(\mathbf{h}, \mathbf{w})+\rho(\mathbf{w}, \mathbf{g})),
\end{equation}
for any $\mathbf{h}, \mathbf{g}, \mathbf{w} \in \mathcal{N}$ and some $C_\rho>0$.

\vskip 5mm
\emph{Proof of Theorem \ref{thm:con-Pm-bounded}.} Recall that the kernel $P_m$ is Lipschitzian on $\mathcal{N}\setminus \{\mathbf{0}\}$, and is homogeneous of degree $-Q$ by  Theorem \ref{th:pm-ex}.
 In order to proving $P_m$ is a  Calder\'{o}n-Zygmund kernel, we need to prove kernel $P_m(\mathbf{g},\mathbf{h})$ on $\mathcal{N}$ for $\mathbf{g} \neq \mathbf{h}, \mathbf{g},\mathbf{h} \in \mathcal{N}$ satisfies the following conditions:\\
(i) $\quad|P_m(\mathbf{h}^{-1}\cdot\mathbf{g})| \lesssim  \frac{1}{\rho(\mathbf{g}, \mathbf{h})^Q}$;\\
(ii) $\quad\left|P_m(\mathbf{h}^{-1}\cdot\mathbf{g})-P_m\left(\mathbf{h}^{-1}\cdot\mathbf{g}_0\right)\right| \lesssim  \frac{\rho\left(\mathbf{g}, \mathbf{g}_0\right)}{\rho\left(\mathbf{g}_0, \mathbf{h}\right)^{Q+1}}, \quad$ if $\rho\left(\mathbf{g}_0,\mathbf{h}\right) \geq c \rho\left(\mathbf{g}, \mathbf{g}_0\right)$;\\
(iii) $\quad\left|P_m(\mathbf{h}^{-1}\cdot\mathbf{g})-P_m\left(\mathbf{h}^{-1}_0 \cdot\mathbf{g}\right)\right| \lesssim \frac{\rho\left(\mathbf{h}, \mathbf{h}_0\right)}{\rho\left(\mathbf{g}, \mathbf{h}_0\right)^{Q+1}}, \quad$ if $\rho\left(\mathbf{g}, \mathbf{h}_0\right) \geq c \rho\left(\mathbf{h}, \mathbf{h}_0\right)$
for some constant $c>0$, where $Q=2n+2r$ is the homogeneous dimension of $\mathcal{N}$.\par
Note that
 \begin{equation*}\begin{aligned}
 \mathcal{I}_{m,j}(-\mathbf{y}, -\mathbf{t})=&\int_{S^{r-1}}
  \frac{(\det\mathcal{B}^\tau)^{\frac{1}{2}}\left(\langle\mathcal{B}^\tau\mathbf{y},\mathbf{y}\rangle+i(-\mathbf{ t}) \cdot \tau\right)^{m-j}}
  {\left(\langle\mathcal{B}^\tau\mathbf{y},\mathbf{y}\rangle-i (-\mathbf{t })\cdot \tau\right)^{m+n+r-j}} d\tau
  =\overline{\mathcal{I}_{m,j}(\mathbf{y}, \mathbf{t})}.
  \end{aligned}\end{equation*}
 It follows from the integral representation formula \eqref{eq:ex-pm}-\eqref{eq:ex-I-1}  that
\begin{equation}\label{p-y-t=pyt}
P_m(-\mathbf{y},-\mathbf{t})=\overline{P_m(\mathbf{y},\mathbf{t})}.
\end{equation}
So we only need to prove (i) and (ii). Denote
\begin{equation*}
\mathscr{B}(\mathbf{y},\mathbf{t},\tau)=\langle\mathcal{B}^\tau\mathbf{y},\mathbf{y}\rangle-i \mathbf{ t}\cdot\tau.
\end{equation*}
 We begin with proving the size estimate  (i).   For $|\mathbf{y}|^2  \geq \varepsilon^2 |\mathbf{t}|$ with $\varepsilon$ as in the proof of Theorem \ref{th:pm-ex},  there exists a constant $C>1$ such that
$$\left|\overline{\mathscr{B}(\mathbf{y},\mathbf{t},\tau)}\right|^2\leq C|\mathbf{y}|^4+|\mathbf{t}|^2\leq C|\mathbf{y}|^4+C|\mathbf{t}|^2=C\|\mathbf{g}\|^4,$$
by Lemma \ref{lem:B-y} and
$$\left|\mathscr{B}(\mathbf{y},\mathbf{t},\tau)\right|^2\geq C^{-1}|\mathbf{y}|^4\gtrsim |\mathbf{y}|^4+|\mathbf{t}|^2 =\|\mathbf{g}\|^4, \qquad\text{for}\,\,\tau\in S^{r-1}.$$
Therefore
\begin{equation*}\label{modu-a-b-1}
\begin{aligned}
\left|\frac{ \overline{\mathscr{B}(\mathbf{y},\mathbf{t},\tau)}^{a}}{\mathscr{B}(\mathbf{y},\mathbf{t},\tau)^{b}}\right|
\leq\frac{C}{\|\mathbf{g}\|^{2b-2a}},
\end{aligned}
\end{equation*}
for $a,b\in \Bbb N_0$. Similarly, for $|\mathbf{y}|^2 < \varepsilon^2 |\mathbf{t}|$,
$$\left|\langle\mathcal{B}^{O_{\mathbf{t}}\chi_{\omega^{\prime}}(z)}\mathbf{y},\mathbf{y}\rangle+i |\mathbf{ t}|z\right|^2\leq C|\mathbf{y}|^4+|\mathbf{t}|^2\leq C|\mathbf{y}|^4+C|\mathbf{t}|^2=C\|\mathbf{g}\|^4,$$
and
$$\left|\langle\mathcal{B}^{O_{\mathbf{t}}\chi_{\omega^{\prime}}(z)}\mathbf{y},\mathbf{y}\rangle-i |\mathbf{ t}|z\right|\geq \varepsilon|\mathbf{t}|-C \varepsilon^2 |\mathbf{t}| \gtrsim \|\mathbf{g}\|^2,$$
since $\varepsilon\leq |z|\leq 1$ for $z\in L_\varepsilon$. Here we take sufficiently small $\varepsilon$ such that $\frac{1}{\varepsilon}> C$.
Therefore
\begin{equation*}\label{modu-a-b-2}
\begin{aligned}
\left|\frac{ \left(\langle\mathcal{B}^{O_{\mathbf{t}}\chi_{\omega^{\prime}}(z)}\mathbf{y},\mathbf{y}\rangle+i |\mathbf{ t}|z\right)^{a}}{\left(\left|\langle\mathcal{B}^{O_{\mathbf{t}}\chi_{\omega^{\prime}}(z)}\mathbf{y},\mathbf{y}\rangle-i |\mathbf{ t}|z\right|\right)^{b}}\right|
\leq\frac{C}{\|\mathbf{g}\|^{2b-2a}},
\end{aligned}
\end{equation*}
for $a,b\in \Bbb N_0$.
By Proposition \ref{pro:pm-expression} and Theorem \ref{th:pm-ex}, we have
\begin{equation*}
 \begin{aligned}
  \left|P_{m}(\mathbf{g})\right|\leq\frac{C}{\|\mathbf{g}\|^Q},
  \end{aligned}\end{equation*}
 where the implicit constant depends on $m, n, r$,  which shows that (i) holds.\par
 We now prove the regularity estimate (ii).
For $\mathbf{y} \neq \mathbf{0}$,
 \begin{equation*} \begin{aligned}
  Y_jP_{m}(\mathbf{g}) = \sum_{j=0}^{r}C_{m,j}\int_{S^{r-1}} (\det\mathcal{B}^\tau)^{\frac{1}{2}}\cdot Y_j\frac{\overline{\mathscr{B}(\mathbf{y},\mathbf{t},\tau)}^{m-j}}{\mathscr{B}(\mathbf{y},\mathbf{t},\tau)^{m+n+r-j}} d\tau,
  \end{aligned}\end{equation*}
 by Proposition \ref{pro:pm-expression}.  Note that
 \begin{equation}\label{Y-Pm-a-b-1}\begin{aligned}
Y_j\frac{\overline{\mathscr{B}(\mathbf{y},\mathbf{t},\tau)}^{a}}{\mathscr{B}(\mathbf{y},\mathbf{t},\tau)^{b}}
=&\frac{aM_j\overline{\mathscr{B}(\mathbf{y},\mathbf{t},\tau)}^{a-1}\mathscr{B}(\mathbf{y},\mathbf{t},\tau)^{b}}
{\mathscr{B}(\mathbf{y},\mathbf{t},\tau)^{2b}}
-\frac{b\overline{M}_j\overline{\mathscr{B}(\mathbf{y},\mathbf{t},\tau)}^{a}\mathscr{B}(\mathbf{y},\mathbf{t},\tau)^{b-1}}
{\mathscr{B}(\mathbf{y},\mathbf{t},\tau)^{2b}},
\end{aligned}\end{equation}
 where $ M_j:=Y_j\overline{\mathscr{B}(\mathbf{y},\mathbf{t},\tau)}.$
Since
 \begin{equation*}
 \left|\frac{\partial}{\partial y_j} \mathscr{B}(\mathbf{y},\mathbf{t},\tau)\right| \leq C\left(|\mathbf{y}|^4+|\mathbf{t}|^{2}\right)^{\frac{1}{4}}
\quad\text{and}\,\,
 \left|\frac{\partial}{\partial t_j} \mathscr{B}(\mathbf{y},\mathbf{t},\tau)\right| =|\tau_j|\leq  |\tau|=1,
 \end{equation*}
 we have
  \begin{equation}\label{Mj}
 \left|M_j\right| \leq C\left(|\mathbf{y}|^4+|\mathbf{t}|^{2}\right)^{\frac{1}{4}}.
 \end{equation}
 Hence,
 \begin{equation*}\label{modu-I1}\begin{aligned}
\left|Y_j\frac{\overline{\mathscr{B}(\mathbf{y},\mathbf{t},\tau)}^{a}}{\mathscr{B}(\mathbf{y},\mathbf{t},\tau)^{b}}\right|
\leq C\left(|\mathbf{y}|^4+|\mathbf{t}|^{2}\right)^{\frac{1}{4}} \frac{\left|\overline{\mathscr{B}(\mathbf{y},\mathbf{t},\tau)}\right|^{a}}{\left|\mathscr{B}(\mathbf{y},\mathbf{t},\tau)\right|^{b+1}}\leq \frac{C}{\|\mathbf{g}\|^{2b-2a+1}},
\end{aligned}\end{equation*}
 by \eqref{Y-Pm-a-b-1}-\eqref{Mj}.
Therefore,
\begin{equation*}
 \left|Y_j P_m(\mathbf{g})\right| \leq\frac{C}{\|\mathbf{g}\|^{Q+1}}, \quad \mathbf{g} \in \mathcal{N} \backslash\{\mathbf{0}\}.
\end{equation*}

Note that $Y_j=\frac{\partial}{\partial y_j}$ for $\mathbf{y} = \mathbf{0}$.
It follows from  Theorem \ref{th:pm-ex} that
 \begin{equation*} \begin{aligned}
  Y_jP_{m}(\mathbf{g}) = \sum_{j=0}^{r}C_{m,j}\int_{L_\varepsilon}\int_{S^{r-2}} (1-z^2)^{\frac{r-2}{2}} I_1 d\omega^{\prime} dz,
  \end{aligned}\end{equation*}
  with
  \begin{equation*} \begin{aligned}
 I_1&:=\frac{\partial}{\partial y_j}\left[\frac{ (\det\mathcal{B}^{O_{\mathbf{t}}\chi_{\omega^{\prime}}(z)})^{\frac{1}{2}}\left(\langle\mathcal{B}^{O_{\mathbf{t}}\chi_{\omega^{\prime}}(z)}
  \mathbf{y},\mathbf{y}\rangle+i|\mathbf{t}|z\right)^{m-j}}{\left(\langle\mathcal{B}^{O_{\mathbf{t}}\chi_{\omega^{\prime}}(z)}\mathbf{y},\mathbf{y}\rangle-i |\mathbf{t}|z\right)^{m+n+r-j}}\right].
   \end{aligned}\end{equation*}
  Note that
 \begin{equation*}\begin{aligned}
I_1|_{\mathbf{y} =  \mathbf{0}}=0.
\end{aligned}\end{equation*}
Therefore,
\begin{equation}\label{yjPm}
 \left|Y_j P_m(\mathbf{g})\right| \leq\frac{C}{\|\mathbf{g}\|^{Q+1}}, \quad  \mathbf{g} \in \mathcal{N} \backslash\{\mathbf{0}\}.
\end{equation}

It is known that the Carnot-Carath\'{e}odory metric $d_{c c}$ defined by \eqref{def:dcc} is left-invariant, and it is equivalent to the homogeneous metric $\rho$ in the sense that there exist $C_d, \widetilde{C}_d>0$ such that, for any $\mathbf{g}, \mathbf{h} \in \mathcal{N}$ (see \cite[(1.21)]{Ivanov}),
\begin{equation}\label{tri-dcc}
C_d \rho(\mathbf{g}, \mathbf{h}) \leq d_{c c}(\mathbf{g}, \mathbf{h}) \leq \widetilde{C}_d \rho(\mathbf{g}, \mathbf{h}) .
\end{equation}

 For any $\mathbf{g}, \mathbf{h}, \mathbf{g}_0$ with $\rho\left(\mathbf{g}_0, \mathbf{h}\right) \geq c \rho\left(\mathbf{g}, \mathbf{g}_0\right)$, we set $\mathbf{q}=\mathbf{h}^{-1} \cdot \mathbf{g}_0$, then by Lemma \ref{lem:mean value} and \eqref{tri-dcc}, we have
\begin{equation}\label{mean-value}
\begin{aligned}
 \left|P_m\left(\mathbf{h}^{-1} \cdot\mathbf{g}\right)-P_m\left(\mathbf{h}^{-1} \cdot \mathbf{g}_0\right)\right|
& =\left|P_m\left(\mathbf{q} \cdot\left(\mathbf{g}_0^{-1} \cdot \mathbf{g}\right)\right)-P_m(\mathbf{q})\right| \\
& \leq C d_{c c}\left(\mathbf{g}, \mathbf{g}_0\right) \max _{\substack{1 \leq j \leq 2n \\
\mathbf{u}: d_{c c}(\mathbf{u}, \mathbf{0}) \leq d_{c c}\left(\mathbf{g}, \mathbf{g}_0\right)}}\left|Y_j P_m(\mathbf{q} \cdot \mathbf{u})\right| \\
& \leq C \widetilde{C}_d \rho\left(\mathbf{g}, \mathbf{g}_0\right) \max _{\substack{1 \leq j \leq 2n \\
\mathbf{u}: d_{c c}(\mathbf{u}, \mathbf{0}) \leq d_{c c}\left(\mathbf{g},\mathbf{g}_0\right)}}\left|Y_j P_m(\mathbf{q} \cdot \mathbf{u})\right|.
\end{aligned}
\end{equation}

Since $\rho$ is a quasi-distance, by \eqref{tri-inequ}, \eqref{tri-dcc} and the fact that $\rho\left(\mathbf{g}_0, \mathbf{h}\right) \geq$ $c \rho\left(\mathbf{g}, \mathbf{g}_0\right)$, we have
$$
\begin{aligned}
\rho\left(\mathbf{u}, \mathbf{q}^{-1}\right) & \geq \frac{1}{C_\rho} \rho\left(\mathbf{0}, \mathbf{q}^{-1}\right)-\rho(\mathbf{0}, \mathbf{u}) \geq \frac{1}{C_\rho} \rho\left(\mathbf{g}_0, \mathbf{h}\right)-\frac{\widetilde{C}_d}{C_d} \rho\left(\mathbf{g}, \mathbf{g}_0\right) \\
& \geq \frac{1}{C_\rho} \rho\left(\mathbf{g}_0, \mathbf{h}\right)-\frac{\widetilde{C}_d}{c C_d} \rho\left(\mathbf{g}_0, \mathbf{h}\right) \\
& =\left(\frac{1}{C_\rho}-\frac{\widetilde{C}_d}{c C_d}\right) \rho\left(\mathbf{g}_0, \mathbf{h}\right) .
\end{aligned}
$$
Consequently,
$$
\left|Y_{j} P_m(\mathbf{q} \cdot\mathbf{ u})\right| \leq \frac{C}{\|\mathbf{q}\cdot \mathbf{u}\|^{Q+1}}=\frac{C}{\rho(\mathbf{u},\mathbf{q}^{-1})^{Q+1}} \lesssim \frac{1}{\rho\left(\mathbf{g}_0, \mathbf{h}\right)^{Q+1}},
$$
by using \eqref{yjPm}. Together with \eqref{mean-value}, we have
$$
\left|P_m( \mathbf{h}^{-1}\cdot\mathbf{g})-P_m\left(\mathbf{h}^{-1}\cdot\mathbf{g}_0\right)\right|\lesssim \frac{\rho\left(\mathbf{g}, \mathbf{g}_0\right)}{\rho\left(\mathbf{g}_0, \mathbf{h}\right)^{Q+1}}.
$$

  As a consequence, we see that the spectral  operator $\mathbb{P}_m$ is a standard Calder\'{o}n-Zygmund operator. Since $\mathbb{P}_m 1=0$ by \eqref{mu-k=0}, $\mathbb{P}_m$ is $L^2$ bounded by the well known $T 1$ Theorem. Therefore, $\mathbb{P}_m$ is  $L^p$ bounded, $1<p<\infty$.
To prove the last statement, note that
  \begin{equation*}\label{pmphi}
 \begin{aligned}
(\mathbb{P}_m\phi)(\mathbf{y},\mathbf{t})&=\langle\mathscr{P}_m, \phi_{(\mathbf{y},\mathbf{t})} \rangle
=\frac{1}{(2\pi)^r}\int_{\Bbb R^{2n+r}} Q_m(\mathbf{y^{\prime}},\tau) \widetilde{\phi}(\mathbf{y}-\mathbf{y^{\prime}}, \tau)e^{i\mathbf{t}\cdot \tau-2iB(\mathbf{y},\mathbf{y^{\prime}}) \cdot \tau}d\mathbf{y^{\prime}}d\tau\\
&=\frac{1}{(2\pi)^r}\int_{\Bbb R^{r}} \bigg[\widetilde{\phi}(\cdot, \tau)\ast_{\tau}Q_m(\cdot,\tau)\bigg](\mathbf{y})e^{i\mathbf{t}\cdot \tau}d\tau,
\end{aligned}\end{equation*}
by using Fubini's Theorem and \eqref{con-pm-2}, where we  use \eqref{twisor-con} in the last identity. The integral is convergent since $|Q_m(\cdot,\cdot) \widetilde{\phi}(\cdot, \cdot)|$ is in $L^1(\Bbb R^{2n+r})$ by estimate \eqref{esti-I1}. Since $\mathbb{P}_m$ is $L^2$ bounded, $\mathbb{P}_m\phi \in L^2(\Bbb R^{2n+r})$. Then for almost all $\mathbf{y} \in \Bbb R^{2n}$, $\mathbb{P}_m\phi(\mathbf{y}, \cdot) \in L^2(\Bbb R^{r})$. Consequently,
\begin{equation}\label{pm-phi-tilde}
 \widetilde{(\mathbb{P}_m\phi)}(\mathbf{y},\tau)= \left(\widetilde{\phi}(\cdot, \tau)\ast_{\tau}Q_m(\cdot,\tau)\right)(\mathbf{y}),
\end{equation}
by the Fourier inverse formula for $L^2$ functions on $\Bbb R^r$.

For $\phi\in L^2(\mathcal{N})$, there exists a sequence $\phi_n \in \mathcal{S}(\mathcal N)$ such that $\lim_{n\rightarrow \infty} \phi_n =\phi$ in the $L^2$ sense. We can define
$\mathbb{P}_m: L^2(\mathcal{N})\longmapsto  L^2(\mathcal{N})$
by letting
\begin{equation*}
\mathbb{P}_m\phi:=\lim_{n\rightarrow \infty}\mathbb{P}_m\phi_n,
\end{equation*}
in $L^2$, which is obviously independent of the choice of the sequence $\phi_n$, and consequently,
\begin{equation*}
\widetilde{\mathbb{P}_m\phi}=\lim_{n\rightarrow \infty}\widetilde{\mathbb{P}_m\phi_n},
\end{equation*}
in $L^2(\Bbb R^{2n+r})$ by Plancherel identity. Since
\begin{equation*}
\left\|\bigg(\widetilde{\phi}_n(\cdot, \tau)-\widetilde{\phi}(\cdot, \tau)\bigg)\ast_{\tau}Q_m(\cdot,\tau)\right\|_{L^2}\leq \left\|Q_m(\cdot,\tau)\right\|_{L^1}\left\|\widetilde{\phi}_n(\cdot, \tau)-\widetilde{\phi}(\cdot, \tau)\right\|_{L^2},
\end{equation*}
by Proposition \ref{cor:Minkowski}, where $\left\|Q_m(\cdot,\tau)\right\|_{L^1}$ is uniformly bounded by Lemma \ref{lem:L1,L2}, we see that
\begin{equation*}
 \widetilde{(\mathbb{P}_m\phi_n)}(\mathbf{y},\tau) =\bigg(\widetilde{\phi}_n(\cdot, \tau)\ast_{\tau}Q_m(\cdot,\tau)\bigg)(\mathbf{y})\longrightarrow \left(\widetilde{\phi}(\cdot, \tau)\ast_{\tau}Q_m(\cdot,\tau)\right)(\mathbf{y}),
\end{equation*}
in $L^2(\Bbb R^{2n+r})$. Hence, \eqref{pm-phi-tilde} holds for $\phi\in L^2(\mathcal{N}).$

Now we can use \eqref{pm-phi-tilde} repeatedly  to get
\begin{equation*}\begin{aligned}
\widetilde{\mathbb{P}_m(\mathbb{P}_m \phi)}(\mathbf{y},\tau)
=&\bigg[\bigg(\widetilde{\phi}(\cdot,\tau) \ast_\tau Q_{m}(\cdot,\tau)\bigg)\ast_{\tau}Q_{m}(\cdot,\tau)\bigg](\mathbf{y})\\
=&\left(\widetilde{\phi}(\cdot,\tau) \ast_\tau Q_{m}(\cdot,\tau)\right)(\mathbf{y})=\widetilde{(\mathbb{P}_m \phi)}(\mathbf{y},\tau),
\end{aligned}\end{equation*}
since
\begin{equation*}\label{eq:product}
\begin{aligned}
Q_{m_1}\ast_{\tau}Q_{m_2}(\mathbf{y},\tau)=&\sum_{|\mathbf{k}|=m_1}\sum_{|\mathbf{q}|=m_2}\widetilde{\mathscr  L}_{\mathbf{k} }^{(\mathbf{0} )}\ast_\tau\widetilde{\mathscr  L}_{\mathbf{q} }^{(\mathbf{0})}(\mathbf{y},\tau)\\
      =&\sum_{|\mathbf{k}|=m_1}\sum_{|\mathbf{q}|=m_2}\delta_{\mathbf{k}}^{(\mathbf{q})}\widetilde{\mathscr  L}_{\mathbf{k} }^{(\mathbf{0})}(\mathbf{y},\tau)\\
      =&\delta_{m_1}^{m_2}\sum_{|\mathbf{k}|=m_1}\widetilde{\mathscr  L}_{\mathbf{k} }^{(\mathbf{0})}(\mathbf{y},\tau)
      =\delta_{m_1}^{m_2}Q_{m_1}(\mathbf{y},\tau),
      \end{aligned}
         \end{equation*}
 by Proposition \ref{Proposition 1.2 in 7}. Hence,
\begin{equation*}
  \mathbb{P}_{m_1} \circ \mathbb{P}_{m_2} =\delta_{m_1}^{m_2}\mathbb{P}_{m_1},
\end{equation*}
and so $\mathbb{P}_m$ is a projection operator and mutually orthogonal in $L^2$. The theorem is proved. \qed

\section{The spectral projection decomposition}\label{section5}
 Strichartz \cite{S}  investigated the joint spectral
theory of the sub-Laplacian operators $\Delta_b$ and $iT$ on the Heisenberg group $\mathcal{H}_n$, where  $T=\frac{\partial}{\partial t}$.
The associated spectral projection  operators are convolution operators with kernel $K$ that can be described explicitly. These kernels are homogeneous functions of degree $-2n-2$ on $\mathcal{H}_{n}\setminus \{\mathbf{0}\}$ and are Calder\'{o}n-Zygmund kernels. Strichartz showed that the convolution operator is $L^p$ bounded by introducing another principal value associated the continuous function $ K$  on $\mathcal{N} \setminus \{\mathbf{0}\}$ defined by
\begin{equation}\label{p.v.f*k-2}
\langle \widehat{p.v.} K, \phi\rangle:=\lim_{\varepsilon\rightarrow 0^+}\int_{|\mathbf{t}|>\varepsilon}\int_{\Bbb R^{2n}}\phi
(\mathbf{y},\mathbf{t})K(\mathbf{y},\mathbf{t})d\mathbf{y}d\mathbf{t}.
\end{equation}
In order to estimating the boundedness of the operator associated with the Abel sum $\sum_{m=0}^{\infty}R^m\mathbb{P}_{m}$,
 we need  the explicit bound of each operator $\mathbb{P}_m$. Hence, here we need to choose the principal value defined by \eqref{p.v.f*k-2}  which help us to get the explicit bound of each operator $\mathbb{P}_m$.

\begin{pro}\label{expre-dis-2}
Let $\mathcal{N}$ be a nilpotent Lie group of step two and  $K$ be a continuous function on $\mathcal{N} \setminus \{\mathbf{0}\}$ which is homogeneous of degree $-Q$ and satisfies
  \begin{equation}\label{mu-k-2}
\mu_{K}:=\int_{S^{r-1}}\int_{\Bbb R^{2n}}K(\mathbf{y},\mathbf{t})d\mathbf{y}d\mathbf{t}=0.
\end{equation}
  Then the formula \eqref{p.v.f*k-2}
for any $\phi\in \mathcal{S}(\mathcal{N})$, defines a tempered distribution $\widehat{p.v.} K$ which is homogeneous of degree $-Q$. Conversely, suppose $\mathscr{K}$ is a tempered distribution which is homogeneous of degree $-Q$ and whose restriction on $\mathcal{N} \setminus \{\mathbf{0}\}$ is a continuous function $K$. Then \eqref{mu-k-2} holds and
\begin{equation*}
 \mathscr{K} = \widehat{p.v.} K+C\delta_\mathbf{0},
\end{equation*}
for some constant $C$.
\end{pro}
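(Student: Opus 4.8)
The plan is to imitate the proof of Proposition~\ref{expre-dis}, replacing the homogeneous ball $B(\mathbf{0},\varepsilon)$ by the slab $\{|\mathbf{t}|\le\varepsilon\}$ and the polar decomposition around the homogeneous sphere by the one adapted to it. First I would decompose $\mathcal{N}\setminus\{\mathbf{t}=\mathbf{0}\}$ by writing $(\mathbf{y},\mathbf{t})=\delta_\rho(\mathbf{z},\omega)$ with $\rho=|\mathbf{t}|^{1/2}>0$, $\mathbf{z}=\mathbf{y}/\rho\in\Bbb R^{2n}$, $\omega=\mathbf{t}/|\mathbf{t}|\in S^{r-1}$; the Jacobian computation gives $d\mathbf{y}\,d\mathbf{t}=2\rho^{Q-1}d\rho\,d\mathbf{z}\,d\omega$ on $\{\mathbf{t}\ne\mathbf{0}\}$. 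Since $K$ is continuous and homogeneous of degree $-Q$ off the origin and the unit sphere $\{\|(\mathbf{y},\mathbf{t})\|=1\}$ is compact, one has $|K(\mathbf{y},\mathbf{t})|\le C\|(\mathbf{y},\mathbf{t})\|^{-Q}$ for all $(\mathbf{y},\mathbf{t})\neq\mathbf{0}$, hence $|K(\mathbf{z},\omega)|\lesssim\min(1,|\mathbf{z}|^{-Q})$ on $\Bbb R^{2n}\times S^{r-1}$; because $Q=2n+2r>2n+1$, both $\mu_K$ of \eqref{mu-k-2} and $\int_{\Bbb R^{2n}\times S^{r-1}}(1+|\mathbf{z}|)|K(\mathbf{z},\omega)|\,d\mathbf{z}\,d\omega$ are finite. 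For $\phi\in\mathcal{S}(\mathcal{N})$ I would set $g_\phi(\rho):=\int_{S^{r-1}}\int_{\Bbb R^{2n}}K(\mathbf{z},\omega)\,\phi(\delta_\rho(\mathbf{z},\omega))\,d\mathbf{z}\,d\omega$; differentiating under the integral (justified by the bounds above together with the rapid decay of $\phi$) shows $g_\phi\in C^1([0,\infty))$ with $g_\phi'$ bounded on $[0,1]$, $g_\phi(0)=\phi(\mathbf{0},\mathbf{0})\,\mu_K$, and $g_\phi(\rho)=O(\rho^{-N})$ as $\rho\to\infty$ for every $N$, and the slab–polar formula yields, for every $\varepsilon>0$,
\begin{equation}\label{eq:slab-polar-plan}
\int_{|\mathbf{t}|>\varepsilon}\int_{\Bbb R^{2n}}K(\mathbf{y},\mathbf{t})\,\phi(\mathbf{y},\mathbf{t})\,d\mathbf{y}\,d\mathbf{t}=2\int_{\sqrt{\varepsilon}}^{\infty}\frac{g_\phi(\rho)}{\rho}\,d\rho .
\end{equation}

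Given the hypothesis $\mu_K=0$ of the first assertion, we have $g_\phi(0)=0$, hence $|g_\phi(\rho)|\le C\rho$ on $[0,1]$, so the right-hand side of \eqref{eq:slab-polar-plan} converges absolutely and has a limit as $\varepsilon\to0^+$; tracking the constants, this limit is bounded by a Schwartz seminorm of $\phi$, so \eqref{p.v.f*k-2} defines an element $\widehat{p.v.}K\in\mathcal{S}'(\mathcal{N})$. For the homogeneity I would change variables $(\mathbf{y},\mathbf{t})\mapsto\delta_{1/\lambda}(\mathbf{y},\mathbf{t})$ in $\langle\widehat{p.v.}K,\phi\circ\delta_\lambda\rangle=\lim_{\varepsilon\to0^+}\int_{|\mathbf{t}|>\varepsilon}\phi(\delta_\lambda(\mathbf{y},\mathbf{t}))K(\mathbf{y},\mathbf{t})\,d\mathbf{y}\,d\mathbf{t}$: since $K\circ\delta_{1/\lambda}=\lambda^{Q}K$, the Lebesgue measure scales by $\lambda^{-Q}$, and the region $\{|\mathbf{t}|>\varepsilon\}$ becomes $\{|\mathbf{t}|>\lambda^{2}\varepsilon\}$ (whose complement still shrinks to $\{\mathbf{t}=\mathbf{0}\}$), one gets $\langle\widehat{p.v.}K,\phi\circ\delta_\lambda\rangle=\langle\widehat{p.v.}K,\phi\rangle$, which by \eqref{homo-2} is exactly homogeneity of degree $-Q$.

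For the converse, let $\mathscr{K}$ be a tempered distribution, homogeneous of degree $-Q$, whose restriction to $\mathcal{N}\setminus\{\mathbf{0}\}$ is the continuous (hence automatically $(-Q)$-homogeneous) function $K$. The first step is to prove $\mu_K=0$. Fix $\phi\in C_c^\infty(\mathcal{N})$ with $\phi\equiv1$ on a ball about $\mathbf{0}$; then for $\lambda>\mu>0$ the function $\phi\circ\delta_\lambda-\phi\circ\delta_\mu$ lies in $C_c^\infty(\mathcal{N}\setminus\{\mathbf{0}\})$, so $\langle\mathscr{K},\phi\circ\delta_\lambda-\phi\circ\delta_\mu\rangle=\int K\,(\phi\circ\delta_\lambda-\phi\circ\delta_\mu)$, and, using $g_{\phi\circ\delta_\lambda}(\rho)=g_\phi(\lambda\rho)$, $g_\phi(\rho)=0$ for large $\rho$, and $g_\phi(0)=\mu_K$, a substitution-and-regularization of \eqref{eq:slab-polar-plan} evaluates this to $-2\mu_K\log(\lambda/\mu)$. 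On the other hand $\mathscr{K}$ homogeneous forces $\langle\mathscr{K},\phi\circ\delta_\lambda\rangle=\langle\mathscr{K},\phi\circ\delta_\mu\rangle$, so $\mu_K\log(\lambda/\mu)=0$ for all $\lambda,\mu$, whence $\mu_K=0$. Now $\widehat{p.v.}K$ exists by the first part, and for $\psi\in C_c^\infty(\mathcal{N}\setminus\{\mathbf{0}\})$ one has $\langle\widehat{p.v.}K,\psi\rangle=\int K\psi$ (because $K$ is bounded on $\mathrm{supp}\,\psi$ the slab cut-off is harmless), so $\widehat{p.v.}K$ also restricts to $K$ off the origin. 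Hence $\mathscr{K}-\widehat{p.v.}K$ is a tempered distribution supported at $\{\mathbf{0}\}$, i.e. a finite sum $\sum_{|\alpha|\le N}c_\alpha\partial^\alpha\delta_{\mathbf{0}}$; it is also homogeneous of degree $-Q$, while $\partial^\alpha\delta_{\mathbf{0}}$ is homogeneous of degree $-Q-d(\alpha)$ with $d(\alpha)>0$ for $\alpha\neq0$, so only the $\alpha=0$ term can survive, giving $\mathscr{K}=\widehat{p.v.}K+C\delta_{\mathbf{0}}$.

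The one genuinely new point compared with the classical statement (Proposition~\ref{expre-dis}) is that the surface $\Bbb R^{2n}\times S^{r-1}$ over which $\mu_K$ is taken is non-compact, so it is not a priori clear that $\mu_K$ is finite or that $g_\phi$ is $C^1$ up to $\rho=0$; the main work is therefore the decay estimate $|K(\mathbf{z},\omega)|\lesssim|\mathbf{z}|^{-Q}$ for large $|\mathbf{z}|$ and the resulting absolute convergence of the $\mathbf{z}$-integrals (and, with one extra power of $|\mathbf{z}|$, of the differentiated integrals defining $g_\phi'$), which is exactly where the inequality $Q=2n+2r>2n$ enters. Everything else is a bookkeeping adaptation of the Folland–Stein argument.
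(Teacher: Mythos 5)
Your proof is correct, and at the level of overall strategy it is the same Folland--Stein/Strichartz argument the paper uses, but two steps are executed along genuinely different lines. First, for the direct part you reduce everything to the one-variable function $g_\phi(\rho)=\int_{S^{r-1}}\int_{\mathbb{R}^{2n}}K(\mathbf{z},\omega)\,\phi(\delta_\rho(\mathbf{z},\omega))\,d\mathbf{z}\,d\omega$ via the slab--polar change of variables $d\mathbf{y}\,d\mathbf{t}=2\rho^{Q-1}d\rho\,d\mathbf{z}\,d\omega$, so that existence of the principal value becomes the convergence of $\int_0^1 g_\phi(\rho)\rho^{-1}d\rho$, which follows from $g_\phi(0)=\mu_K\phi(\mathbf{0})=0$ and $g_\phi\in C^1([0,1])$; the paper instead splits $\{\varepsilon<|\mathbf{t}|\le 1\}$ into $\{|\mathbf{y}|\le 1\}$ and $\{|\mathbf{y}|\ge 1\}$, subtracts $\phi(\mathbf{0})$ using \eqref{mu-k-2} together with the scaling identity \eqref{homo-r}, and checks integrability of each piece, arriving at the regularized formula \eqref{def:p.v.k}. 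The two computations are equivalent, but yours isolates exactly where the hypotheses enter --- the finiteness of $\int_{\mathbb{R}^{2n}\times S^{r-1}}(1+|\mathbf{z}|)|K(\mathbf{z},\omega)|$, which you correctly derive from $|K(\mathbf{z},\omega)|\lesssim\min(1,|\mathbf{z}|^{-Q})$ and $Q=2n+2r>2n+1$, a point the paper leaves largely implicit. Second, for the necessity of $\mu_K=0$ in the converse you test $\mathscr{K}$ directly against $\phi\circ\delta_\lambda-\phi\circ\delta_\mu$, which lies in $C_c^\infty(\mathcal{N}\setminus\{\mathbf{0}\})$, and evaluate the resulting Frullani integral to $-2\mu_K\log(\lambda/\mu)$, which must vanish by \eqref{homo-2}; the paper instead introduces the auxiliary distribution $\mathscr{F}$ defined by the regularized formula, notes that $\mathscr{F}-\mathscr{K}$ is a finite combination of derivatives of $\delta_{\mathbf{0}}$ so that $\langle\mathscr{F},\phi\circ\delta_\lambda\rangle-\langle\mathscr{F},\phi\rangle$ stays bounded as $\lambda\to 0$, and computes this quantity to be $-\phi(\mathbf{0})\mu_K\log\lambda^2$. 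Your version avoids having to define $\mathscr{F}$ before knowing $\mu_K=0$ and is slightly cleaner. The remaining steps --- homogeneity of $\widehat{p.v.}K$ by rescaling the slab to $\{|\mathbf{t}|>\lambda^2\varepsilon\}$, and the support-plus-homogeneity argument showing $\mathscr{K}-\widehat{p.v.}K=C\delta_{\mathbf{0}}$ --- coincide with the paper's.
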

\begin{proof}
 Note that
  \begin{equation}\begin{aligned}\label{I1+I2}
    &\int_{|\mathbf{t}|>\varepsilon}\int_{\Bbb R^{2n}}K(\mathbf{y},\mathbf{t})\phi(\mathbf{y},\mathbf{t})d\mathbf{y}d\mathbf{t}\\
    =&\int_{U_{1,\varepsilon}}K(\mathbf{y},\mathbf{t})\bigg(\phi(\mathbf{y},\mathbf{t})-\phi(\mathbf{0})\bigg)d\mathbf{y}d\mathbf{t}
    +\int_{U_{2,\varepsilon}}K(\mathbf{y},\mathbf{t})\bigg(\phi(\mathbf{y},\mathbf{t})-\phi(\mathbf{0})\bigg)d\mathbf{y}d\mathbf{t}\\
    &\qquad+\int_{|\mathbf{t}|\geq 1}\int_{\Bbb R^{2n}}K(\mathbf{y},\mathbf{t})\phi(\mathbf{y},\mathbf{t})d\mathbf{y}d\mathbf{t}\\
    :=&I_{1,\varepsilon}+I_{2,\varepsilon}+I_2,
  \end{aligned}\end{equation}
by \eqref{mu-k-2}, where $U_{1, \varepsilon}:=\{(\mathbf{y},\mathbf{t}): |\mathbf{y}|\leq 1, \epsilon<|\mathbf{t}|\leq 1\}$, $U_{2, \varepsilon}:=\{(\mathbf{y},\mathbf{t}): |\mathbf{y}|\geq 1, \varepsilon <|\mathbf{t}|\leq 1\}$ with $\epsilon>0$. Since  $K$ is a continuous function on $\mathcal{N} \setminus \{\mathbf{0}\}$ which is homogeneous of degree $-Q$, then there exists a constant $C$ such that
 \begin{equation*}
   |K(\mathbf{y},\mathbf{t})|\leq \frac{C}{(|\mathbf{y}|^4+|\mathbf{t}|^2)^{\frac{Q}{4}}}.
 \end{equation*}
 Since $\phi$ is a Schwartz function on $\Bbb R^{2n+r}$, then $I_2$ is convergent. Similarly, $I_{2,\epsilon}$ is convergent. For Schwartz function $\phi$, there exists constants $C$ such that
\begin{equation*}
  |\phi(\mathbf{y},\mathbf{t})-\phi(\mathbf{0})|\leq C(|\mathbf{y}|+|\mathbf{t}|),
\end{equation*}
then
\begin{equation*}\label{esti-2}
\left | K(\mathbf{y},\mathbf{t})\big(\phi(\mathbf{y},\mathbf{t})-\phi(\mathbf{0})\big)\right|\leq \frac{C(|\mathbf{y}|+|\mathbf{t}|)}{(|\mathbf{y}|^4+|\mathbf{t}|^2)^{\frac{Q}{4}}},
\end{equation*}
which is integrable on $U_{1, \varepsilon}$, i.e.,  $I_{1,\epsilon}$ is convergent. Take limit on both sides of \eqref{I1+I2} to get
\begin{equation}\begin{aligned}\label{def:p.v.k}
  \langle \widehat{p.v.} K, \phi\rangle = &\lim_{\epsilon\rightarrow 0}\int_{|\mathbf{t}|>\varepsilon}\int_{\Bbb R^{2n}}K(\mathbf{y},\mathbf{t})\phi(\mathbf{y},\mathbf{t})d\mathbf{y}d\mathbf{t}\\
    =&\int_{|\mathbf{t}|<1}\int_{\Bbb R^{2n}}K(\mathbf{y},\mathbf{t})\bigg(\phi(\mathbf{y},\mathbf{t})-\phi(\mathbf{0})\bigg)d\mathbf{y}d\mathbf{t}+\int_{|\mathbf{t}|\geq 1}\int_{\Bbb R^{2n}}K(\mathbf{y},\mathbf{t})\phi(\mathbf{y},\mathbf{t})d\mathbf{y}d\mathbf{t}.
  \end{aligned}\end{equation}
 Moreover,
  \begin{equation*}\begin{aligned}
\langle \widehat{p.v.} K, \phi\circ \delta_\lambda\rangle
=&\lim_{\epsilon\rightarrow 0}\int_{|\mathbf{t}|\geq \varepsilon}\int_{\Bbb R^{2n}}K(\mathbf{y},\mathbf{t})\phi(\lambda\mathbf{y},\lambda^2\mathbf{t})d\mathbf{y}d\mathbf{t}\\
=&\lim_{\epsilon\rightarrow 0}\int_{|\widetilde{\mathbf{t}}|\geq \lambda^2 \varepsilon}\int_{\Bbb R^{2n}}K(\widetilde{\mathbf{y}},\widetilde{\mathbf{t}})\phi(\widetilde{\mathbf{y}},\widetilde{\mathbf{t}})d\widetilde{\mathbf{y}}d\widetilde{\mathbf{t}}
=\langle \widehat{p.v.}K, \phi\rangle,
\end{aligned}\end{equation*}
for any $\lambda>0$ by taking coordinates transformation $\widetilde{\mathbf{y}}=\lambda\mathbf{y},\widetilde{\mathbf{t}}=\lambda^2\mathbf{t}$ and using  the fact that $K$ is homogeneous of degree $-Q$. So that $\widehat{p.v.} K$ is homogeneous of degree $-Q$ as a distribution by \eqref{homo-2}. \par
Conversely, if $\mathscr{K}$ is a homogeneous distribution which agrees with $K$ away from $\mathbf{0}$, then $\mathscr{K}-\widehat{p.v.}K$ is supported at $\mathbf{0}$ since $\widehat{p.v.}K$  agrees with $K$ on $\mathcal{N}\backslash \{\mathbf{0}\}$ by definition \eqref{p.v.f*k-2}. Hence it is a linear combination of $\delta_{\mathbf{0}}$ and its derivatives. Let $\mathbf{x}=(\mathbf{y},\mathbf{t})$. But $\bigg(\frac{\partial}{\partial \mathbf{x}}\bigg)^{\mathbf{I}}\delta_{\mathbf{0}} $ is homogeneous of degree $-Q-d(\mathbf{I})$, where $d(\mathbf{I}):=\sum_{j=1}^{2n}I_j+2\sum_{\alpha=1}^{r}I_{2n+\alpha}$ with $\mathbf{I}=(I_1,\ldots,I_{2n+r})$. So by homogeneity we must have $\mathscr{K}-\widehat{p.v.}K=C\delta_{\mathbf{0}}$.

Note that for $\lambda>0$,
\begin{equation}\begin{aligned}\label{homo-r}
\int_{S^{r-1}}\int_{\Bbb R^{2n}}K(\mathbf{y},\lambda\dot{\mathbf{t}})d\mathbf{y}d\mathbf{\dot{t}}
=&\int_{S^{r-1}}\int_{\Bbb R^{2n}}K(\sqrt{\lambda}\frac{\mathbf{y}}{\sqrt{\lambda}},\lambda\dot{\mathbf{t}})d\mathbf{y}d\mathbf{\dot{t}}\\
=&\lambda^{-n-r}\int_{S^{r-1}}\int_{\Bbb R^{2n}}K(\frac{\mathbf{y}}{\sqrt{\lambda}},\dot{\mathbf{t}})d\mathbf{y}d\mathbf{\dot{t}}\\
=&\lambda^{-r}\int_{S^{r-1}}\int_{\Bbb R^{2n}}K(\widetilde{\mathbf{y}},\dot{\mathbf{t}})d\widetilde{\mathbf{y}}d\mathbf{\dot{t}},
\end{aligned}\end{equation}
where the second identity holds since $K$ is homogeneous of degree $-Q$ and we take a coordinate transformation $\widetilde{\mathbf{y}}=\frac{\mathbf{y}}{\sqrt{\lambda}}$ in the third identity.

 Consider the distribution $\mathscr{F}$ defined by
\begin{equation*}\begin{aligned}
   \langle\mathscr{F} , \phi\rangle= &\int_{|\mathbf{t}|<1}\int_{\Bbb R^{2n}}K(\mathbf{y},\mathbf{t})[\phi(\mathbf{y},\mathbf{t})-\phi(\mathbf{0})]d\mathbf{y}d\mathbf{t}
    +\int_{|\mathbf{t}|\geq 1}\int_{\Bbb R^{2n}}K(\mathbf{y},\mathbf{t})\phi(\mathbf{y},\mathbf{t})d\mathbf{y}d\mathbf{t},
  \end{aligned}\end{equation*}
for $\phi \in \mathcal{S}(\mathcal{N})$. $\mathscr{F}$ agrees with $\mathscr{K}$ away from $\mathbf{0}$, so again $\mathscr{F}-\mathscr{K}=\sum_{I}a_{I}\bigg(\frac{\partial}{\partial \mathbf{x}}\bigg)^{\mathbf{I}}\delta_{\mathbf{0}}$. Since $\mathscr{K}$ is homogeneous of degree $-Q$ for any $\phi \in \mathcal{S}(\mathcal{N})$ and $\lambda>0$, then it follows from \eqref{homo-2} that
\begin{equation*}\begin{aligned}
\langle \mathscr{F}, \phi\circ \delta_\lambda\rangle-\langle \mathscr{F}, \phi\rangle=\langle \mathscr{F}-\mathscr{K}, \phi\circ \delta_\lambda\rangle-\langle \mathscr{F}-\mathscr{K}, \phi\rangle= \sum_{\mathbf{I}}a_{\mathbf{I}}(\lambda^{d(\mathbf{I})}-1)\bigg(\frac{\partial \phi}{\partial \mathbf{x}}\bigg)^{\mathbf{I}}(\mathbf{0}),
\end{aligned}\end{equation*}
 which is bounded as $\lambda\rightarrow 0$. Note that  for $\lambda<1$,
\begin{equation*}\begin{aligned}
\langle \mathscr{F}, \phi\circ \delta_\lambda\rangle-\langle \mathscr{F}, \phi\rangle
=&\int_{|\mathbf{t}|<\lambda^2}\int_{\Bbb R^{2n}}[\phi(\mathbf{y},\mathbf{t})-\phi(\mathbf{0})]
K(\mathbf{y},\mathbf{t})d\mathbf{y}d\mathbf{t}+\int_{|\mathbf{t}|\geq \lambda^2}\int_{\Bbb R^{2n}}K(\mathbf{y},\mathbf{t})\phi(\mathbf{y},\mathbf{t})d\mathbf{y}d\mathbf{t}\\
-&\int_{|\mathbf{t}|<1}\int_{\Bbb R^{2n}}[\phi(\mathbf{y},\mathbf{t})-\phi(\mathbf{0})]
K(\mathbf{y},\mathbf{t})d\mathbf{y}d\mathbf{t}-\int_{|\mathbf{t}|\geq 1}\int_{\Bbb R^{2n}}K(\mathbf{y},\mathbf{t})\phi(\mathbf{y},\mathbf{t})d\mathbf{y}d\mathbf{t}\\
=&\phi(\mathbf{0})\int_{\lambda^2\leq|\mathbf{t}|<1}\int_{\Bbb R^{2n}}K(\mathbf{y},\mathbf{t})d\mathbf{y}d\mathbf{t}\\
=&\phi(\mathbf{0})\int_{\lambda^2}^{1}|\mathbf{t}|^{r-1} d|\mathbf{t}|\int_{S^{r-1}}\int_{\Bbb R^{2n}}K(\mathbf{y},|\mathbf{t}|\dot{\mathbf{t}})d\mathbf{y}d\mathbf{\dot{t}}\\
=&\phi(\mathbf{0})\mu_{K}\int_{\lambda^2}^{1}|\mathbf{t}|^{-1} d|\mathbf{t}|\\
=&-\phi(\mathbf{0})\mu_{K}\log \lambda^2 \longrightarrow \infty,\,\,\,\,\,\,\text{as} \, \lambda\rightarrow 0,
\end{aligned}\end{equation*}
where the forth identity holds by using \eqref{homo-r}.
If $\mu_{K}\neq 0$, then there is an contraction when $\phi(\mathbf{0})\neq 0$. Hence, \eqref{mu-k-2} holds. The proposition is proved.
\end{proof}

Since $\mathscr{P}_m$ is  a tempered distribution which is homogeneous of degree $-Q$ and whose restriction to $\mathcal{N} \setminus \{\mathbf{0}\}$ is a continuous function $P_m$, then by Proposition \ref{expre-dis-2},
\begin{equation}\label{def-con-pm-2}
\begin{aligned}
(\phi\ast\mathscr{P}_m)(\mathbf{y},\mathbf{t})=&\lim_{\epsilon\rightarrow 0}\int_{|\mathbf{t}^{\prime}|>\epsilon}\int_{\Bbb R^{2n}}P_m(\mathbf{y^{\prime}},\mathbf{t^{\prime}})\phi_{(\mathbf{y},\mathbf{t})}(\mathbf{y^{\prime}},\mathbf{t^{\prime}})d\mathbf{y^{\prime}}d\mathbf{t^{\prime}}
+C\phi(\mathbf{y},\mathbf{t})\\
:=&\phi\ast\widehat{p.v.}P_m(\mathbf{y},\mathbf{t})+C\phi(\mathbf{y},\mathbf{t}),
\end{aligned}
\end{equation}
for some constant $C$.

In \cite{S}, Strichartz gave the following boundedness of the  principal value of integral $\phi\ast \widehat{p.v.} K$ associated with the continuous function $ K$ defined by \eqref{p.v.f*k-2}.
\begin{lem}\cite[Lemma 3.1]{S}
  Let $K(\mathbf{y},t)$ be an odd function homogeneous of degree $-2n-2$ on the Heisenberg group $G$. Then the operator norm of $\phi\ast \widehat{p.v.} K$ on $L^p$ is bounded by $c_p\int_{\Bbb R^{2n}}|K(\mathbf{y},1)|d\mathbf{y}$, where $c_p$ is a constant depending on $p$.
\end{lem}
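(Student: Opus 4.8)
The plan is to disintegrate the operator $T\phi:=\phi\ast\widehat{p.v.}K$ (with $K$ the given odd kernel, homogeneous of degree $-2n-2$) into a superposition of model singular integrals whose $L^p$ norms are controlled \emph{uniformly}, and then to read off the bound $c_p\int_{\mathbb{R}^{2n}}|K(\mathbf{y},1)|\,d\mathbf{y}$ from Minkowski's integral inequality. Writing the convolution on $\mathcal{H}_n$ as
$$T\phi(\mathbf{y},s)=\lim_{\varepsilon\to0}\int_{|t|>\varepsilon}\int_{\mathbb{R}^{2n}}\phi\bigl(\mathbf{y}-\mathbf{x},\ s-t-2B(\mathbf{y},\mathbf{x})\bigr)K(\mathbf{x},t)\,d\mathbf{x}\,dt,$$
I would use the homogeneity $K(\mathbf{x},t)=|t|^{-n-1}K\bigl(|t|^{-1/2}\mathbf{x},\,{\rm sgn}\,t\bigr)$ to rescale via $\mathbf{x}=|t|^{1/2}\mathbf{u}$, then collapse the $t<0$ part onto the $t>0$ part using the oddness $K(-\mathbf{x},-t)=-K(\mathbf{x},t)$ together with the substitution $\mathbf{u}\mapsto-\mathbf{u}$. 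After the further substitution $t=r^2$ this produces
$$T\phi=\int_{\mathbb{R}^{2n}}K(\mathbf{u},1)\,(S_{\mathbf{u}}\phi)\,d\mathbf{u},\qquad S_{\mathbf{u}}\phi(\mathbf{g})=c\lim_{\varepsilon\to0}\int_{r>\varepsilon}\frac{dr}{r}\Bigl[\phi\bigl(\mathbf{g}\cdot\delta_r(\mathbf{u},1)^{-1}\bigr)-\phi\bigl(\mathbf{g}\cdot\delta_r(\mathbf{u},1)\bigr)\Bigr],$$
where $\delta_r(\mathbf{y},s)=(r\mathbf{y},r^2s)$ are the parabolic dilations; thus $S_{\mathbf{u}}$ is exactly the Hilbert transform on $\mathcal{H}_n$ along the dilation orbit $r\mapsto\delta_r(\mathbf{u},1)=(r\mathbf{u},r^2)$ through $(\mathbf{u},1)$. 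Interchanging $\int K(\mathbf{u},1)\,d\mathbf{u}$ with the $t$-integral is legitimate for $\phi\in\mathcal{S}(\mathcal{H}_n)$, since the bracket is $O\bigl(t^{1/2}(1+|\mathbf{u}|)\bigr)$ near $t=0$ and decays rapidly in a horizontal or in the central variable as $t\to\infty$.

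Two facts then close the argument. First, $\int_{\mathbb{R}^{2n}}|K(\mathbf{u},1)|\,d\mathbf{u}<\infty$: the point $(\mathbf{u},1)$ has homogeneous norm $\|(\mathbf{u},1)\|=(|\mathbf{u}|^4+1)^{1/4}\sim|\mathbf{u}|$ as $|\mathbf{u}|\to\infty$, so homogeneity of degree $-2n-2$ and continuity of $K$ away from the origin give $|K(\mathbf{u},1)|\lesssim(1+|\mathbf{u}|)^{-2n-2}$, which is integrable on $\mathbb{R}^{2n}$, while $K(\cdot,1)$ is bounded near $\mathbf{u}=0$. Second — and this is the heart of the matter — one needs the uniform operator bound $\sup_{\mathbf{u}\in\mathbb{R}^{2n}}\|S_{\mathbf{u}}\|_{L^p(\mathcal{H}_n)\to L^p(\mathcal{H}_n)}=:c_p<\infty$ for $1<p<\infty$. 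Granting both, Minkowski's integral inequality gives
$$\|T\phi\|_p\le\int_{\mathbb{R}^{2n}}|K(\mathbf{u},1)|\,\|S_{\mathbf{u}}\phi\|_p\,d\mathbf{u}\le c_p\Bigl(\int_{\mathbb{R}^{2n}}|K(\mathbf{u},1)|\,d\mathbf{u}\Bigr)\|\phi\|_p,$$
which is precisely the asserted estimate.

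The expected main obstacle is the uniform bound on the $S_{\mathbf{u}}$. Each $S_{\mathbf{u}}$ is a Hilbert transform along the homogeneous parabolic curve $r\mapsto(r\mathbf{u},r^2)$ in the nilpotent group $\mathcal{H}_n$; such operators are $L^p$-bounded for $1<p<\infty$ by the curvature (finite-type / van der Corput) methods for singular integrals along curves developed by Stein--Wainger and Nagel--Rivi\`{e}re--Wainger. On $\mathbb{R}^{2n+1}$ this is classical, and passing to $\mathcal{H}_n$ only adds the $\mathbf{y}$-dependent shear $s\mapsto s-2rB(\mathbf{y},\mathbf{u})$ hidden in $\mathbf{g}\mapsto\mathbf{g}\cdot\delta_r(\mathbf{u},1)^{\pm1}$ — this is the group twist and does not affect the estimates. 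Uniformity over the directions $\mathbf{u}/|\mathbf{u}|\in S^{2n-1}$ follows by continuity and compactness, all these curves having the same finite type; the genuinely delicate point is uniformity in $|\mathbf{u}|$, because dilating $\mathbf{u}$ is \emph{not} an automorphism of $\mathcal{H}_n$ (the parabolic dilations scale the centre quadratically). I would handle this by decomposing the radial integral $\int\frac{dr}{r}$ into dyadic annuli $r\sim2^{k}$ — on each of which the needed rescaling of the centre becomes compatible with the group law, yielding a fixed constant per piece — and then summing the pieces by an almost-orthogonality ($TT^{*}$/Cotlar or square-function) argument.

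Finally, for the endpoint $p=2$ one may bypass curves altogether: the partial Fourier transform in the central variable turns $T$ into twisted convolution $\widetilde{\phi}(\cdot,\tau)\mapsto\widetilde{\phi}(\cdot,\tau)\ast_{\tau}m_{\tau}$ by the partial symbol $m_{\tau}$ of $\widehat{p.v.}K$, and a Weyl/Bargmann-calculus estimate (again using homogeneity and oddness) shows that $v\mapsto m_{\tau}\ast_{\tau}v$ has $L^{2}(\mathbb{R}^{2n})$ operator norm bounded uniformly in $\tau$ by $C\int_{\mathbb{R}^{2n}}|K(\mathbf{y},1)|\,d\mathbf{y}$; Plancherel in the central variable then gives the $L^{2}(\mathcal{H}_n)$ bound directly, which may serve as the base estimate for the almost-orthogonality step above.
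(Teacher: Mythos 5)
Your overall architecture is the same as the one the paper relies on: the paper does not reprove Strichartz's lemma but quotes it, and its proof of the generalization (Lemma \ref{lem:esti}) follows exactly your skeleton --- use homogeneity to rescale $\mathbf{x}=|t|^{1/2}\mathbf{u}$, write $\phi\ast\widehat{p.v.}K$ as $\int K(\mathbf{u},1)\,S_{\mathbf{u}}\phi\,d\mathbf{u}$ with $S_{\mathbf{u}}$ a Hilbert transform along the parabolic orbit $r\mapsto\delta_r(\mathbf{u},1)$, apply Minkowski's integral inequality, and control each $S_{\mathbf{u}}$ uniformly by the Stein--Wainger theorem on singular integrals along well-curved curves. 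Your verification that $\int|K(\mathbf{u},1)|\,d\mathbf{u}<\infty$ and the justification of the interchange are fine.

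Where you diverge is precisely at the point you call ``the genuinely delicate point,'' the uniformity in $|\mathbf{u}|$, and here your proposed fix (dyadic decomposition in $r$ plus a $TT^{*}$/Cotlar summation) is both unnecessary and, as written, insufficient: Cotlar--Stein is an $L^{2}$ device and would not by itself recover the $L^{p}$ bound for $p\neq2$. The paper (following Strichartz) gets exact uniformity by an elementary linear change of variables. Rotate so that $\mathbf{u}=(\rho,0,\dots,0)$; by skew-symmetry $B(\mathbf{y},\mathbf{u})=\rho\sum_{k\geq2}B_{k1}y_k$ is independent of $y_1$, so for each fixed $y''=(y_2,\dots,y_{2n})$ the operator $S_{\mathbf{u}}$ acts only in the variables $(y_1,t)$ by translation along the plane curve $r\mapsto(r\rho,\ \mathrm{sgn}(r)r^2+r\rho M_1)$ with $M_1=M_1(y'')$ constant. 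An anisotropic dilation together with a shear in $(y_1,t)$ maps this onto the fixed model curve $(u,\mathrm{sgn}(u)u^2+u)$; since conjugating by an invertible linear map of the ambient space (with the Jacobian factors cancelling between the function and the measure) preserves the $L^{p}\to L^{p}$ operator norm, the Stein--Wainger constant for the single model curve bounds $\|S_{\mathbf{u}}\|_{L^p\to L^p}$ for every $\mathbf{u}$ and every $y''$ simultaneously. The fact that $\delta_\lambda$ applied to $\mathbf{u}$ is not a group automorphism is irrelevant: no group structure is needed for this normalization, only a measurable linear change of coordinates fiberwise in $y''$. If you replace your dyadic/almost-orthogonality step by this rescaling argument, your proof closes and coincides with the one in the literature; one further small correction is that after folding $t<0$ onto $t>0$ by oddness the resulting curve is the odd extension $(r,r|r|)$ of the parabola rather than $(r,r^2)$ itself, which is still a homogeneous well-curved curve covered by Stein--Wainger.
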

We generalized this lemma to the case of nilpotent  Lie group $\mathcal{N}$ of step two.
\begin{lem}\label{lem:esti}
Let $K(\mathbf{y}, \mathbf{t})$ be  homogeneous of degree $-2 n-2r$ on  $\mathcal{N}$ and satisfy \eqref{mu-k-2}. Then the operator norm of $\phi\ast \widehat{p.v.}K$ on $L^{p}$ is bounded by $c_{p} \int_{S^{r-1}}\int_{\mathbb{R}^{2n}}|K(\mathbf{y}, \mathbf{t})| d \mathbf{y}d\mathbf{t},$  where $c_p$ is a constant depending on $p$, $1<p<\infty$.
\end{lem}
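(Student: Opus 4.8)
The plan is to follow Strichartz's \emph{method of rotations}, adapted to the parabolic dilations $\delta_\lambda(\mathbf{y},\mathbf{t})=(\lambda\mathbf{y},\lambda^2\mathbf{t})$ of $\mathcal{N}$. Writing $T_K\phi:=\phi\ast\widehat{p.v.}K$ and using the representation of $\widehat{p.v.}K$ behind \eqref{p.v.f*k-2}, I would first unwind, for $\phi\in\mathcal{S}(\mathcal{N})$ and $\mathbf{g}=(\mathbf{y},\mathbf{t})$,
\begin{equation*}
T_K\phi(\mathbf{g})=\lim_{\varepsilon\to0^+}\int_{|\mathbf{t}'|>\varepsilon}\phi\big(\mathbf{g}\cdot(\mathbf{y}',\mathbf{t}')^{-1}\big)K(\mathbf{y}',\mathbf{t}')\,d\mathbf{y}'\,d\mathbf{t}'.
\end{equation*}
Then I introduce parabolic polar coordinates $(\mathbf{y}',\mathbf{t}')=\delta_\lambda(\mathbf{z},\omega)$ with $\lambda>0$, $\omega\in S^{r-1}$, $\mathbf{z}\in\mathbb{R}^{2n}$, for which $d\mathbf{y}'\,d\mathbf{t}'=2\lambda^{Q-1}\,d\lambda\,d\mathbf{z}\,d\omega$ and $|\mathbf{t}'|>\varepsilon$ becomes $\lambda>\sqrt{\varepsilon}$. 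Since $K$ is homogeneous of degree $-Q$, this turns $T_K$ into an average over the ``fibre'' $S^{r-1}\times\mathbb{R}^{2n}$, weighted by $K$:
\begin{equation*}
T_K\phi(\mathbf{g})=\lim_{\varepsilon\to0^+}\int_{S^{r-1}}\int_{\mathbb{R}^{2n}}K(\mathbf{z},\omega)\Big(\int_{\sqrt{\varepsilon}}^{\infty}\phi\big(\mathbf{g}\cdot\Gamma_{\mathbf{z},\omega}(\lambda)\big)\tfrac{2\,d\lambda}{\lambda}\Big)\,d\mathbf{z}\,d\omega,\qquad \Gamma_{\mathbf{z},\omega}(\lambda):=\delta_\lambda(\mathbf{z},\omega)^{-1}=(-\lambda\mathbf{z},-\lambda^2\omega),
\end{equation*}
a superposition of \emph{one-sided} ($\lambda>0$) integrals along the parabolas $\Gamma_{\mathbf{z},\omega}$ in $\mathcal{N}$.

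The second step promotes these to two-sided principal values. Split $K=K_{\mathrm{od}}+K_{\mathrm{ev}}$ into the parts odd and even under $\mathbf{z}\mapsto-\mathbf{z}$ (both homogeneous of degree $-Q$, with $\|K_{\mathrm{od}}\|,\|K_{\mathrm{ev}}\|\le\|K\|$ in the fibre $L^1$ norm and $\mu_{K_{\mathrm{ev}}}=\mu_K=0$). Using $\Gamma_{-\mathbf{z},\omega}(\lambda)=\Gamma_{\mathbf{z},\omega}(-\lambda)$ and the substitution $\mathbf{z}\mapsto-\mathbf{z}$ in one copy, the $K_{\mathrm{od}}$-part becomes the convergent expression $\int_{S^{r-1}}\int_{\mathbb{R}^{2n}}K_{\mathrm{od}}(\mathbf{z},\omega)\,\mathcal{H}_{\mathbf{z},\omega}\phi(\mathbf{g})\,d\mathbf{z}\,d\omega$ with $\mathcal{H}_{\mathbf{z},\omega}\phi(\mathbf{g}):=p.v.\int_{\mathbb{R}}\phi(\mathbf{g}\cdot\Gamma_{\mathbf{z},\omega}(\lambda))\,\tfrac{d\lambda}{\lambda}$, i.e.\ an average of Hilbert transforms along parabolas; the same symmetrization applied to $K_{\mathrm{ev}}$ produces instead the non-cancelling kernel $|\lambda|^{-1}$, and here one uses $\mu_K=0$: subtracting $\phi(\mathbf{g})\mathbf{1}_{\{|\lambda|<1\}}$ inside the $\lambda$-integral kills a $\phi(\mathbf{g})\mu_{K_{\mathrm{ev}}}\log(1/\varepsilon)$ term, leaving $\int_{S^{r-1}}\int_{\mathbb{R}^{2n}}K_{\mathrm{ev}}(\mathbf{z},\omega)\,\mathcal{E}_{\mathbf{z},\omega}\phi(\mathbf{g})\,d\mathbf{z}\,d\omega$ with $\mathcal{E}_{\mathbf{z},\omega}\phi(\mathbf{g}):=\int_{\mathbb{R}}\big[\phi(\mathbf{g}\cdot\Gamma_{\mathbf{z},\omega}(\lambda))-\phi(\mathbf{g})\mathbf{1}_{\{|\lambda|<1\}}\big]\tfrac{d\lambda}{|\lambda|}$, absolutely convergent for $\phi\in\mathcal{S}(\mathcal{N})$. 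Granting that $\mathcal{H}_{\mathbf{z},\omega}$ and $\mathcal{E}_{\mathbf{z},\omega}$ are bounded on $L^p(\mathcal{N})$, $1<p<\infty$, with operator norm $\le c_p$ \emph{uniformly in} $(\mathbf{z},\omega)$, Minkowski's integral inequality gives
\begin{equation*}
\|T_K\phi\|_{L^p}\le\int_{S^{r-1}}\int_{\mathbb{R}^{2n}}\big(|K_{\mathrm{od}}|+|K_{\mathrm{ev}}|\big)\sup_{(\mathbf{z},\omega)}\big(\|\mathcal{H}_{\mathbf{z},\omega}\|+\|\mathcal{E}_{\mathbf{z},\omega}\|\big)\|\phi\|_{L^p}\le c_p\|\phi\|_{L^p}\int_{S^{r-1}}\int_{\mathbb{R}^{2n}}|K(\mathbf{y},\mathbf{t})|\,d\mathbf{y}\,d\mathbf{t},
\end{equation*}
which is the assertion (the fibre $(\mathbf{z},\omega)\in\mathbb{R}^{2n}\times S^{r-1}$ is exactly the domain in \eqref{mu-k-2}).

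The main obstacle is precisely the uniform one–dimensional estimates in the third step. For the odd part, $\mathcal{H}_{\mathbf{z},\omega}$ is a Hilbert transform along the non-degenerate curve $\Gamma_{\mathbf{z},\omega}$ in the homogeneous group $\mathcal{N}$, so its $L^p$ boundedness follows from the Stein--Wainger-type theory of singular integrals along curves; uniformity in $(\mathbf{z},\omega)$ is obtained from the dilation invariance $\delta_\lambda\!\circ\!\Gamma_{\mathbf{z},\omega}(\mu)=\Gamma_{\mathbf{z},\omega}(\lambda\mu)$, which absorbs changes of the aperture $|\mathbf{z}|$ into reparametrizations, together with the compactness of $S^{r-1}$. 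The genuinely delicate point is the even operator $\mathcal{E}_{\mathbf{z},\omega}$: its kernel has no cancellation, so — unlike the odd, Hilbert-transform case — boundedness cannot be read off the classical theory of rough singular integrals (in the Euclidean model an $L\log L$ condition on the sphere would be required for even kernels, whereas we only have $L^1$). Here one must exploit that $\widehat{p.v.}$ truncates only in the central variable $\mathbf{t}$: after a partial Fourier transform in $\mathbf{t}$, $\mathcal{E}_{\mathbf{z},\omega}$ becomes, at each frequency $\tau$, a \emph{twisted} convolution on $\mathbb{R}^{2n}$ whose phase is governed by $B^\tau(\cdot,\mathbf{z})$, which is non-degenerate because $\mathcal{N}$ is a non-degenerate step-two group; the resulting oscillation supplies the decay needed for the uniform $L^2$ (hence, by interpolation and duality, $L^p$) bound, and one checks — using the homogeneity bound $|K(\mathbf{z},\omega)|\lesssim\|(\mathbf{z},\omega)\|^{-Q}$ away from $S^{r-1}$ and $r\ge1$ — that all the interchanges of limits and integrals above are justified. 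Assembling these pieces, and noting $\int_{S^{r-1}}\int_{\mathbb{R}^{2n}}|K(\mathbf{z},\omega)|\,d\mathbf{z}\,d\omega$ is the quantity in the statement, completes the proof.
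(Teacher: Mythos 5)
Your first step (parabolic polar coordinates $(\mathbf{y}',\mathbf{t}')=\delta_\lambda(\mathbf{z},\omega)$, the factor $2\lambda^{Q-1}$, reduction to a superposition of integrals $\int \frac{d\lambda}{\lambda}$ along the curves $\lambda\mapsto(-\lambda\mathbf{z},-\lambda^2\omega)$, then Minkowski) is exactly the paper's opening move, and your odd part is handled the same way the paper handles the whole kernel: reduce, by rescalings in $\rho=|\mathbf{z}|$ and in the quantities $M_\beta=2\sum_k B^\beta_{k1}y_k$, to a fixed principal value along the well-curved curve $\Upsilon(u)=(u,u^2+u,u,\ldots,u)$ in $\mathbb{R}^{r+1}$ and quote Stein--Wainger. (The paper does not split $K$ into odd and even parts; your symmetrization of the odd piece, which produces an honest two-sided principal value, is actually closer to Strichartz's original argument.) Your uniformity claim for $\mathcal{H}_{\mathbf{z},\omega}$ glosses over the dependence on the base point through $B(\mathbf{y},\mathbf{z})$ -- this is precisely what the paper's fibrewise change of variables in $y''$ is for -- but that is repairable.

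The genuine gap is the even part, and it cannot be closed along the lines you sketch, because the fibrewise operator $\mathcal{E}_{\mathbf{z},\omega}$ is not bounded on $L^p$; hence the Minkowski step $\|T_{K_{\mathrm{ev}}}\phi\|_p\le \sup_{\mathbf{z},\omega}\|\mathcal{E}_{\mathbf{z},\omega}\|_{p\to p}\,\|\phi\|_p\int\int|K_{\mathrm{ev}}|$ bounds nothing. Already in the flat one-dimensional model the multiplier of $\phi\mapsto\int_{\mathbb{R}}\bigl(\phi(x-\lambda)-\phi(x)\mathbf{1}_{\{|\lambda|<1\}}\bigr)\frac{d\lambda}{|\lambda|}$ equals $2C_0-2\log|\xi|$, which is unbounded; for the parabola, curvature repairs the high-frequency end via van der Corput but the logarithmic divergence at low frequency persists, since the phase $a\lambda+b\lambda^2$ degenerates as $(a,b)\to 0$. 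Your proposed rescue -- oscillation from $B^\tau(\cdot,\mathbf{z})$ after a partial Fourier transform in $\mathbf{t}$ -- fails exactly there: at central frequencies $\tau$ near $0$ there is no oscillation, and non-degeneracy of $B^\tau$ gives nothing uniform as $\tau\to 0$. Nor can the single scalar condition $\mu_K=0$ restore a fibre-by-fibre bound: the cancellation it encodes is between different $(\mathbf{z},\omega)$ and is destroyed the moment $|K|$ is pulled outside (subtracting a fixed reference operator $\mathcal{E}_0$ does not help either, since the logarithmic singularities of $\mathcal{E}_{\mathbf{z},\omega}$ and $\mathcal{E}_0$ sit at $(\mathbf{z},\omega)$-dependent frequencies). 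This is precisely why Strichartz states his Lemma 3.1 for \emph{odd} kernels only, and why, in the Euclidean analogue, even rough kernels require $L\log L$ on the sphere rather than $L^1$ -- a point you raise yourself but do not resolve. To complete the proof you must either impose a symmetry that makes the non-cancelling part vanish, or treat the whole kernel at once as the paper does, rather than isolating an even piece whose fibrewise operators are individually unbounded.
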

\begin{proof} Since $K(\mathbf{y}, \mathbf{t})$ be  homogeneous of degree $-2 n-2r$ on  $\mathcal{N}$ and satisfy \eqref{mu-k-2}, then \eqref{def:p.v.k} is well defined and
\begin{equation*}
\begin{aligned}
\phi\ast\widehat{ p.v. } K(\mathbf{y}, \mathbf{t})
&=\lim_{\varepsilon\rightarrow 0^+}\int_{|s|>\varepsilon^2}\int_{\Bbb R^{2n}}\phi\bigg(\mathbf{y}-\mathbf{w},\mathbf{t}-\mathbf{s}-2B(\mathbf{y},\mathbf{w})\bigg)\cdot|\mathbf{s}|^{-n-r}\cdot K(\frac{\mathbf{w}}{\sqrt{|\mathbf{s}|}},\dot{\mathbf{s}})d\mathbf{w}d\mathbf{s}\\
&=\lim_{\varepsilon\rightarrow 0^+}\int_{|s|>\varepsilon^2}\int_{\Bbb R^{2n}}\phi\bigg(\mathbf{y}-\sqrt{|\mathbf{s}|}\widetilde{\mathbf{w}},\mathbf{t}-\mathbf{s}
-2\sqrt{|\mathbf{s}|}B(\mathbf{y},\widetilde{\mathbf{w}})\bigg)\cdot|\mathbf{s}|^{-r}\cdot K(\widetilde{\mathbf{w}},\dot{\mathbf{s}})d\mathbf{\widetilde{w}}d\mathbf{s}\\
&=\lim_{\varepsilon\rightarrow 0^+}\int_{\Bbb R^{2n}} \int_{S^{r-1}}\int_{\varepsilon^2}^{+\infty}\phi\bigg(\mathbf{y}-\sqrt{|\mathbf{s}|}\widetilde{\mathbf{w}},\mathbf{t}-|\mathbf{s}|\dot{\mathbf{s}}
-2\sqrt{|\mathbf{s}|}B(\mathbf{y},\widetilde{\mathbf{w}})\bigg) K(\widetilde{\mathbf{w}},\dot{\mathbf{s}})d\widetilde{\mathbf{w}}\frac{d|\mathbf{s}|}{|\mathbf{s}|} d\dot{\mathbf{s}}\\
&=\lim_{\varepsilon\rightarrow 0^+}2\int_{\Bbb R^{2n}} \int_{S^{r-1}}\int_{\varepsilon}^{+\infty}\phi\bigg(\mathbf{y}-u \mathbf{w}, t-u^{2}\dot{\mathbf{s}}-2uB(\mathbf{y},\mathbf{w})\bigg) \frac{d u}{u} K(\mathbf{w}, \dot{s})d \mathbf{w}d\dot{s}\\
&:=2\int_{\Bbb R^{2n}} \int_{S^{r-1}}H\phi_{\mathbf{w},\dot{\mathbf{s}}}(\mathbf{y},\mathbf{t}) K(\mathbf{w}, \dot{s})d \mathbf{w}d\dot{s},
\end{aligned}
\end{equation*}
where
\begin{equation*}
H\phi_{\mathbf{w},\dot{\mathbf{s}}}(\mathbf{y},\mathbf{t}):=\lim_{\varepsilon\rightarrow 0^+}\int_{\varepsilon}^{+\infty}\phi\bigg(\mathbf{y}-u \mathbf{w}, \mathbf{t}-u^{2}\dot{\mathbf{s}}-2uB(\mathbf{y},\mathbf{w})\bigg) \frac{d u}{u},
\end{equation*}
for fixed $\mathbf{w}\in \Bbb R^{2n}, \dot{\mathbf{s}}\in S^{r-1}$.
Here the first identity holds since $K(\mathbf{y}, \mathbf{t})$ is homogeneous of degree $-2 n-2r$, we take the coordinate transform
$$\widetilde{\mathbf{w}}:=\frac{\mathbf{w}}{\sqrt{|\mathbf{s}|}},\,\,\,\,\,\, u:=\sqrt{|\mathbf{s}|},$$
in the second and forth identity respectively.
Then by  Minkowski's inequality, we have
\begin{equation*}
\begin{aligned}
 \| \phi \ast\widehat{p.v.} K\|_{L^p}
=&2\bigg(\int_{\Bbb R^{2n}\times \Bbb R^{r}}\bigg|\int_{\Bbb R^{2n}\times S^{r-1}}H\phi_{\mathbf{w},\dot{\mathbf{s}}}(\mathbf{y},\mathbf{t}) K(\mathbf{w}, \dot{s})d \mathbf{w}d\dot{s}\bigg|^p d\mathbf{y}d\mathbf{t}\bigg)^{\frac{1}{p}}\\
\leq& 2\int_{\Bbb R^{2n}\times S^{r-1}}\bigg(\int_{\Bbb R^{2n}\times \Bbb R^{r}}\bigg|H\phi_{\mathbf{w},\dot{\mathbf{s}}}(\mathbf{y},\mathbf{t}) K(\mathbf{w}, \dot{s})\bigg|^p d\mathbf{y}d\mathbf{t}\bigg)^{\frac{1}{p}}d \mathbf{w}d\dot{s}\\
=&2\int_{\Bbb R^{2n}\times S^{r-1}}\bigg(\int_{\Bbb R^{2n}\times \Bbb R^{r}}\bigg|H\phi_{\mathbf{w},\dot{\mathbf{s}}}(\mathbf{y},\mathbf{t})\bigg|^p d\mathbf{y}d\mathbf{t}\bigg)^{\frac{1}{p}} |K(\mathbf{w}, \dot{s})|d \mathbf{w}d\dot{s}.
\end{aligned}
\end{equation*}
Then  the lemma will follow if we can show that $H\phi_{\mathbf{w},\dot{\mathbf{s}}}$ is $L^p$ bounded and its bound is is independent of $\mathbf{w}$ and $\dot{\mathbf{s}}$.

 We can rotate $\mathbf{w}$ to be of the form $(\rho, 0, \ldots, 0)$, $\rho$ real and $\dot{s}:=(1,0,\ldots,0)$. Let $M_{\beta}:=2\sum_{k=1}^{2n}B^{\beta}_{k1}y_{k}$. Note that $M_{\beta}$ is independent of $y_1$ and $M_{\beta}\neq 0$ for $\mathbf{y}\neq\mathbf{0}$. Denote $y^{\prime\prime}:=(y_2,\ldots,y_{2n})$. Then
 \begin{equation*}
\begin{aligned}
\|H\phi_{\mathbf{w},\dot{\mathbf{s}}}\|_{L^p}^{p}
=&\int_{\Bbb R^{2n}\times \Bbb R^{r}}\bigg|\lim_{\varepsilon\rightarrow 0^+}\int_{\varepsilon}^{+\infty}\phi\bigg(y_{1}-u \rho, y^{\prime\prime}, t_1-u^{2}-u\rho M_1,t_2-u\rho M_2,\ldots\bigg) \frac{d u}{u}\bigg|^p d\mathbf{y}d\mathbf{t}\\
=&\int_{\Bbb R^{2n-1}} I(y^{\prime\prime})dy^{\prime\prime},
\end{aligned}
\end{equation*}
where
\begin{equation*}
\begin{aligned}
I(y^{\prime\prime})=\int_{\Bbb R\times \Bbb R^{r}}\bigg|\lim_{\varepsilon\rightarrow 0^+}\int_{\varepsilon}^{+\infty}\phi\bigg(y_{1}-u \rho, y^{\prime\prime}, t_1-u^{2}-u\rho M_1,t_2-u\rho M_2,\ldots\bigg) \frac{d u}{u}\bigg|^p dy_1d\mathbf{t}.
\end{aligned}
\end{equation*}
 Then we have
 \begin{equation}\label{Hf-1}
\begin{aligned}
 &I(y^{\prime\prime})\\
 =&\int_{\Bbb R\times \Bbb R^{r}}\bigg|\lim_{\varepsilon\rightarrow 0^+}\int_{\varepsilon}^{+\infty}\phi\bigg(\rho^2y^{\prime}_{1}-\rho^2 u^{\prime}, y^{\prime\prime}, \rho^2t_1^{\prime}-\rho^2u^{\prime2}-\rho^2 u^{\prime}M_1,\rho^2t_2^{\prime}-\rho^2u^{\prime}M_2,\ldots\bigg) \frac{d u^{\prime}}{u^{\prime}}\bigg|^p \rho^{2+2r}dy_{1}^{\prime}d\mathbf{t}^{\prime}\\
 =&\int_{\Bbb R\times \Bbb R^{r}}\bigg|\lim_{\varepsilon\rightarrow 0^+}\int_{\varepsilon}^{+\infty}\widetilde{\phi}\bigg(y^{\prime}_{1}- u^{\prime}, y^{\prime\prime}, t_1^{\prime}-u^{\prime2}- u^{\prime}M_1, t_2^{\prime}-u^{\prime}M_2,\ldots\bigg) \frac{d u^{\prime}}{u^{\prime}}\bigg|^p \rho^{2+2r}dy_{1}^{\prime}d\mathbf{t}^{\prime}\\
 =& \int_{\Bbb R\times\Bbb R^{r}}\bigg|\lim_{\varepsilon\rightarrow 0^+}\int_{\varepsilon}^{+\infty}\widetilde{\phi}\bigg(M_1\widetilde{y}_{1}- M_1\widetilde{u}, y^{\prime\prime}, M_1^2\widetilde{t}_1-M_1^2\widetilde{u}^{2}-M_1^2\widetilde{u},\\
  &\qquad\qquad\qquad\qquad\qquad M_1M_2\widetilde{t}_2-M_1M_2\widetilde{u},\ldots\bigg) \frac{d \widetilde{u}}{\widetilde{u}}\bigg|^p \rho^{2+2r}M_1^{1+r}M_2\cdots M_{r}d\widetilde{y}_{1}d\widetilde{\mathbf{t}}\\
 =&\int_{\Bbb R\times\Bbb R^{r}}\bigg|\lim_{\varepsilon\rightarrow 0^+}\int_{\varepsilon}^{+\infty}\widetilde{\widetilde{\phi}}\bigg(\widetilde{y}_{1}- \widetilde{u}, y^{\prime\prime}, \widetilde{t}_1-\widetilde{u}^{2}-\widetilde{u},\widetilde{t}_2-\widetilde{u},\ldots\bigg) \frac{d\widetilde{ u}}{\widetilde{u}}\bigg|^p \rho^{2+2r}M_1^{2+r}M_{2}\cdots M_{r}d\widetilde{y}_{1}d\widetilde{\mathbf{t}}\\
 =&\rho^{2+2r}M_1^{2+r}M_{2}\cdots M_{r}\int_{\Bbb R\times\Bbb R^{r}}\bigg|\mathcal{H} \widetilde{\widetilde{\phi}}_{y^{\prime\prime}}(\widetilde{y}_1,\widetilde{\mathbf{t}})\bigg|^pd\widetilde{y}_{1}d\widetilde{\mathbf{t}},
\end{aligned}
\end{equation}
where
\begin{equation}\label{integral-along curve}
\begin{aligned}
\mathcal{H} \widetilde{\widetilde{\phi}}_{y^{\prime\prime}}(\widetilde{y}_1,\widetilde{\mathbf{t}}):=&\lim_{\varepsilon\rightarrow 0^+}\int_{\varepsilon}^{+\infty}\widetilde{\widetilde{\phi}}\bigg(\widetilde{y}_{1}- \widetilde{u}, y^{\prime\prime}, \widetilde{t}_1-\widetilde{u}^{2}-\widetilde{u}, \widetilde{t}_2-\widetilde{u},\ldots\bigg) \frac{d \widetilde{u}}{\widetilde{u}}\\
=&p.v. \int_{0}^{+\infty}\widetilde{\widetilde{\phi}}_{y^{\prime\prime}}\bigg((\widetilde{y}_{1},  \widetilde{t}_1, \widetilde{t}_2,\ldots)-\Upsilon(\widetilde{u})\bigg)\frac{d \widetilde{u}}{\widetilde{u}}.
\end{aligned}
\end{equation}
Here we take coordinates $y_{1}=\rho^2y_{1}^{\prime}, u=\rho u^{\prime}, t_{\beta}=\rho^2 t^\prime_{\beta}$ in the first identity, $y_{1}^{\prime}=M_1\widetilde{y}_{1}, u^{\prime}=M_1 \widetilde{u}, t^{\prime}_{\beta}=M_1M_{\beta} \widetilde{t}_{\beta}$ in the third identity, and we take
\begin{equation}\label{phi-1}
\widetilde{\phi}(y_1,y^{\prime\prime},\mathbf{t}):=\phi(\rho^2 y_1, y^{\prime\prime},\rho^2 \mathbf{t})
\end{equation}
 and \begin{equation}\label{phi-2}
 \widetilde{\widetilde{\phi}}(y_1,y^{\prime\prime},t_1,\ldots, t_r):=\widetilde{\phi}(M_1y_1,y^{\prime\prime}, M_1^2t_1, M_1M_{2}t_2,\ldots,M_1M_{r}t_r )
 \end{equation} in the second and forth identity respectively. \par
 Note that
 \begin{equation*}
 \begin{aligned}
 \Bbb R&\longrightarrow \Bbb R^{r+1}\\
 \Upsilon(u)&:=(u,u^2+u,u,\ldots,u)
 \end{aligned}
 \end{equation*}
 is well-curved since
 $\Upsilon(0)=0$ and
 \begin{equation*}
 \Upsilon(u)=u\Upsilon^{\prime}(0)+\frac{u^2}{2}\Upsilon^{\prime\prime}(0),
 \end{equation*}
 where  $\Upsilon^{\prime}(0)=(1,1,1,\ldots,1)$ and $\Upsilon^{\prime\prime}(0)=(0,2,0,\ldots,0)$, that is,  a segment of the curve containing the origin lies in the subspace of $\Bbb R^{r+1}$ spanned by $\Upsilon^{\prime}(0)$ and $\Upsilon^{\prime\prime}(0)$. By \cite{Stein1},  the principal value integral  operators  defined by
$$
\mathcal{H} \phi(x)=p.v.\int_{-1}^1 \phi(x-\gamma(t)) \frac{d t}{t}
$$
for smooth function $f$ on $\Bbb R^{r+1}$, has the following property:
$$
\|\mathcal{H}\phi\|_p \leqslant A_p\|\phi\|_p, \quad 1<p<\infty,
$$
provided $\gamma$ is well-curved, where $A_p$ is a constant independent on $\phi$.
Then $\mathcal{H} \widetilde{\widetilde{\phi}}_{y^{\prime\prime}}(\widetilde{y}_1,\widetilde{\mathbf{t}})$ defined by \eqref{integral-along curve}
is a principal value integral along the well-curved curve $\Upsilon(u):=(u,u^2+u,u,\ldots,u)$. Then by theorem 1 in \cite{Stein1}, we have
 \eqref{integral-along curve} is $L^p$ bounded, that is
 $$
\|\mathcal{H}\widetilde{\widetilde{\phi}}_{y^{\prime\prime}}\|_p \leqslant A_p\|\widetilde{\widetilde{\phi}}_{y^{\prime\prime}}\|_p, \quad 1<p<\infty,
$$
and $A_p$ is independent of $\rho$.
  Hence,
 \begin{equation*}
\begin{aligned}
 &\|H\phi_{\mathbf{w},\dot{\mathbf{s}}}\|_{L^p}^{p}\\
 =&\int_{\Bbb R^{2n-1}}\rho^{2+2r}M_1^{2+r}M_{2}\cdots M_{r}\int_{\Bbb R\times\Bbb R^{r}}\bigg|\mathcal{H} \widetilde{\widetilde{\phi}}_{y^{\prime\prime}}(\widetilde{y}_1,\widetilde{\mathbf{t}})\bigg|^p
 d\widetilde{y}_{1}d\widetilde{\mathbf{t}}dy^{\prime\prime}\\
 \leq&\int_{\Bbb R^{2n-1}} A_{p}^{p}\|\widetilde{\widetilde{\phi}}_{y^{\prime\prime}}\|^p \rho^{2+2r}M_1^{1+r}M_{2}\cdots M_{r}dy^{\prime\prime}\\
  =&\int_{\Bbb R^{2n-1}} A_{p}^{p}\int_{\Bbb R\times\Bbb R^{r}}\bigg|\phi(\rho^2 M_1 y_1, y^{\prime\prime}, \rho^2M_1^2\mathbf{t}_1,\rho^2M_1M_2\mathbf{t}_2,\ldots)\bigg|^p dy_{1} d\mathbf{t}\rho^{2+2r}M_1^{2+r}M_{2}\cdots M_{r}dy^{\prime\prime}\\
  =&A_{p}^{p}\int_{\Bbb R^{2n}\times \Bbb R^{r}}\bigg|\phi(y_1, y^{\prime\prime}, \mathbf{t})\bigg|^p dy_{1}dy^{\prime\prime}d\mathbf{t}\\
  =&A_{p}^{p}\|\phi\|^{p},
\end{aligned}
\end{equation*}
where $A_p$ is independent of $\mathbf{w}$ and $\dot{\mathbf{s}}$. Here we use \eqref{Hf-1} in the first identity, \eqref{phi-1}-\eqref{phi-2} in the second identity. The proposition is proved.
\end{proof}

 \begin{pro}\label{pro:rmpm} On a non-degenerate nilpotent Lie group of step two $\mathcal{N}$,  $P_m(\mathbf{y}, \mathbf{t})$ defined by \eqref{Pm-Qm} satisfies
\begin{equation*}\begin{aligned}\label{RP-2}
&\sum_{m=0}^{\infty}R^mP_{m}(\mathbf{y}, \mathbf{t})
=\frac{(n+r-1)!2^{n-r}M^{r}}{ \pi^{n+r}(1+R)^n}
\int_{S^{r-1}}\frac{(\det\mathcal{B}^\tau)^{\frac{1}{2}}}{\bigg[\langle \mathcal{B}^\tau\mathbf{y},\mathbf{y}\rangle-iM\mathbf{t} \cdot \tau\bigg]^{n+r}}d\tau,
\end{aligned}\end{equation*}
for $\mathbf{y}\neq \mathbf{0}$.
\end{pro}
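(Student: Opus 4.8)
\emph{Proof proposal.} The plan is to start from the closed form \eqref{Qm-2} of $Q_m$, namely
$$Q_m(\mathbf{y},\tau)=\frac{2^n(\det\mathcal{B}^\tau)^{\frac12}}{\pi^n}\,e^{-\langle\mathcal{B}^\tau\mathbf{y},\mathbf{y}\rangle}\,L_m^{(n-1)}\bigl(2\langle\mathcal{B}^\tau\mathbf{y},\mathbf{y}\rangle\bigr),$$
and to sum the Laguerre factor by the generating identity $\sum_{m\ge0}L_m^{(n-1)}(x)R^m=(1-R)^{-n}e^{-Rx/(1-R)}$, valid for $0\le R<1$. Since $e^{-\sigma}(1-R)^{-n}e^{-2R\sigma/(1-R)}=(1-R)^{-n}e^{-\sigma\frac{1+R}{1-R}}$, this gives the pointwise identity
$$\sum_{m=0}^{\infty}R^m Q_m(\mathbf{y},\tau)=\frac{2^n(\det\mathcal{B}^\tau)^{\frac12}}{\pi^n(1-R)^n}\,e^{-\langle\mathcal{B}^\tau\mathbf{y},\mathbf{y}\rangle\frac{1+R}{1-R}}=:\mathcal{Q}_R(\mathbf{y},\tau),$$
an explicit function which, by $\langle\mathcal{B}^\tau\mathbf{y},\mathbf{y}\rangle=|\tau|\langle\mathcal{B}^{\dot\tau}\mathbf{y},\mathbf{y}\rangle$ from \eqref{Byy} and $\langle\mathcal{B}^{\dot\tau}\mathbf{y},\mathbf{y}\rangle\ge C^{-1}|\mathbf{y}|^2>0$ (Lemma \ref{lem:B-y}), has genuine Gaussian decay in $\tau$ for $\mathbf{y}\ne\mathbf{0}$. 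I would then take the inverse partial Fourier transform of $\mathcal{Q}_R$ and evaluate it in polar coordinates.

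The main obstacle is to justify the interchange $\sum_{m}R^m P_m(\mathbf{y},\mathbf{t})=\frac1{(2\pi)^r}\int_{\mathbb{R}^r}e^{i\mathbf{t}\cdot\tau}\sum_{m}R^m Q_m(\mathbf{y},\tau)\,d\tau$. The crude estimate, combining $\sum_m R^m|Q_m|$ with the classical bound $|L_m^{(n-1)}(x)|\le\binom{m+n-1}{n-1}e^{x/2}$, cancels the Gaussian factor and leaves the non-integrable growth $|\tau|^n$, so it is insufficient. Instead I would control the partial sums by a Cauchy-integral estimate: writing $G_z(\sigma):=(1-z)^{-n}e^{-2z\sigma/(1-z)}$ for $|z|<1$, the generating function of $L_m^{(n-1)}(2\sigma)$, Cauchy's formula on $|z|=\rho$ with $R<\rho<1$ yields
$$\Bigl|\sum_{m=0}^{N}R^m L_m^{(n-1)}(2\sigma)\Bigr|\le\frac{2\rho}{\rho-R}\sup_{|z|=\rho}|G_z(\sigma)|,$$
and since $\operatorname{Re}\frac{z}{1-z}\ge-\frac{\rho}{1+\rho}$ on $|z|=\rho$, the right-hand side is $\lesssim_{R,\rho}e^{\beta\sigma}$ with $\beta:=\frac{2\rho}{1+\rho}<1$. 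Hence the partial sums $S_N(\mathbf{y},\cdot):=\sum_{m=0}^{N}R^m Q_m(\mathbf{y},\cdot)$ satisfy $|S_N(\mathbf{y},\tau)|\lesssim(\det\mathcal{B}^\tau)^{\frac12}e^{-(1-\beta)\langle\mathcal{B}^\tau\mathbf{y},\mathbf{y}\rangle}$, which by Lemma \ref{lem:B-y} is dominated by an $L^1(\mathbb{R}^r,d\tau)$ function independent of $N$. Dominated convergence then gives $S_N(\mathbf{y},\cdot)\to\mathcal{Q}_R(\mathbf{y},\cdot)$ in $L^1(\mathbb{R}^r)$.

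Once this is established, boundedness of the inverse partial Fourier transform from $L^1(\mathbb{R}^r)$ to $L^\infty(\mathbb{R}^r)$ shows that $\sum_{m=0}^{N}R^m P_m(\mathbf{y},\mathbf{t})$ converges, uniformly in $\mathbf{t}$, to $\frac1{(2\pi)^r}\int_{\mathbb{R}^r}e^{i\mathbf{t}\cdot\tau}\mathcal{Q}_R(\mathbf{y},\tau)\,d\tau$. It then remains to evaluate this integral: writing $\tau=|\tau|\dot\tau$, $d\tau=|\tau|^{r-1}d|\tau|\,d\dot\tau$, using $(\det\mathcal{B}^\tau)^{\frac12}=|\tau|^n(\det\mathcal{B}^{\dot\tau})^{\frac12}$ and $\langle\mathcal{B}^\tau\mathbf{y},\mathbf{y}\rangle=|\tau|\langle\mathcal{B}^{\dot\tau}\mathbf{y},\mathbf{y}\rangle$, the radial part becomes the Gamma integral $\int_0^\infty\rho^{n+r-1}e^{-\rho w}\,d\rho=(n+r-1)!\,w^{-(n+r)}$ with $w=\langle\mathcal{B}^{\dot\tau}\mathbf{y},\mathbf{y}\rangle\frac{1+R}{1-R}-i\mathbf{t}\cdot\dot\tau$ (legitimate since $\operatorname{Re}w>0$ for $\mathbf{y}\ne\mathbf{0}$). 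Pulling $\frac{1+R}{1-R}$ out of the bracket and collecting the constants $\frac{2^n(n+r-1)!}{(2\pi)^r\pi^n(1-R)^n}\cdot\bigl(\tfrac{1-R}{1+R}\bigr)^{n+r}=\frac{(n+r-1)!\,2^{n-r}M^r}{\pi^{n+r}(1+R)^n}$ with $M=\frac{1-R}{1+R}$ produces exactly
$$\frac{(n+r-1)!\,2^{n-r}M^r}{\pi^{n+r}(1+R)^n}\int_{S^{r-1}}\frac{(\det\mathcal{B}^\tau)^{\frac12}}{\bigl[\langle\mathcal{B}^\tau\mathbf{y},\mathbf{y}\rangle-iM\mathbf{t}\cdot\tau\bigr]^{n+r}}\,d\tau,$$
which is the asserted formula. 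The only delicate step is the uniform $L^1(d\tau)$ domination in the second paragraph; everything else is routine bookkeeping.
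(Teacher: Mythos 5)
Your proof is correct and follows essentially the same route as the paper's: both rest on the Laguerre generating function $\sum_{m}R^{m}L_{m}^{(n-1)}(x)=(1-R)^{-n}e^{-Rx/(1-R)}$ followed by the Gamma integral $\int_{0}^{\infty}\rho^{n+r-1}e^{-\rho w}\,d\rho=(n+r-1)!\,w^{-(n+r)}$ in the radial variable, and your bookkeeping of the constants $M^{r}$, $2^{n-r}$, $(1+R)^{-n}$ checks out. The only genuine difference is that you perform the summation before passing to polar coordinates and justify the interchange of $\sum_{m}$ with the $\tau$-integral via the Cauchy-integral bound $\bigl|\sum_{m\le N}R^{m}L_{m}^{(n-1)}(2\sigma)\bigr|\lesssim_{R,\rho}e^{\beta\sigma}$ with $\beta=\tfrac{2\rho}{1+\rho}<1$ and dominated convergence, whereas the paper interchanges sum and integral formally inside \eqref{eq:pm-r}; your estimate is a correct and worthwhile supplement to that step.
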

\begin{proof}
It follows from \eqref{eq:pm-r} and the fact
\begin{equation*}
\sum_{m=0}^{\infty}R^mL_m^{\alpha}(x)=\frac{1}{(1-R)^{\alpha+1}}e^{-\frac{Rx}{1-R}}, \qquad |R|<1, \,\,x>0
\end{equation*}
 (cf. \cite[P. 242]{special}) that
\begin{equation*}\begin{aligned}\label{RP-1}
\sum_{m=0}^{\infty}R^mP_{m}(\mathbf{y}, \mathbf{t})=&\frac{1}{4^r \pi^{n+r}}\int_{S^{r-1}}\frac{(\det\mathcal{B}^{\dot{\tau}})^{\frac{1}{2}}}{\sigma^{n+r}}d\dot{\tau}\int_{0}^{\infty} s^{n+r-1} e^{-\frac{\sigma-i\mathbf{t}\cdot\dot{\tau}}{2\sigma} s} \sum_{m=0}^{\infty}R^m L_{m}^{(n-1)}(s) d s\\
=&\frac{1}{4^r \pi^{n+r}(1-R)^{n}}\int_{S^{r-1}}\frac{(\det\mathcal{B}^{\dot{\tau}})^{\frac{1}{2}}}{\sigma^{n+r}}\int_{0}^{\infty} s^{n+r-1}e^{-(\frac{\sigma-i\mathbf{t}\cdot\dot{\tau}}{2\sigma}+\frac{R}{1-R})s}ds  d\dot{\tau}\\
=&\frac{(n+r-1)!}{4^r \pi^{n+r}(1-R)^{n}}\int_{S^{r-1}}\frac{(\det\mathcal{B}^{\dot{\tau}})^{\frac{1}{2}}}{\sigma^{n+r}}\frac{1}{(\frac{\sigma-i\mathbf{t}\cdot\dot{\tau}}{2\sigma}+\frac{R}{1-R})^{n+r}} d\dot{\tau}\\
=&\frac{(n+r-1)!2^{n-r}M^{r}}{ \pi^{n+r}(1+R)^n}\int_{S^{r-1}}\frac{(\det\mathcal{B}^{\dot{\tau}})^{\frac{1}{2}}}{\bigg(\sigma-iM\mathbf{t} \cdot \dot{\tau}\bigg)^{n+r}}d\dot{\tau},
\end{aligned}\end{equation*}
where $\sigma:=\langle \mathcal{B}^{\dot{\tau}}\mathbf{y},\mathbf{y}\rangle$, $M:=\frac{1-R}{R+1}$ and we take coordinates transformation $s=2|\tau|\sigma$ in the first identity.
 We use
\begin{equation*}
\int_{0}^{\infty} s^{k}e^{-\omega s}ds=\frac{k!}{\omega^{k+1}},
\end{equation*}
when ${\rm Re}\omega>0$ in the third identity.
\end{proof}

\begin{pro}\label{pro:esti-PR}
On a non-degenerate nilpotent Lie group of step two $\mathcal{N}$, the operator norm of $\widehat{p.v.}\sum_{m=0}^{\infty}R^mP_{m} $ on $L^{p}$ is bounded by $ \frac{c_{p}}{(R+1)^n}, 1<p<\infty$, $0<R<1$, where $c_p$ is a constant only depending on $p$.
\end{pro}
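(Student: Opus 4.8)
The plan is to put $\sum_{m\ge 0}R^mP_m$ in closed form, recognize it as a rescaled copy of $P_0$, and then apply Lemma \ref{lem:esti}. Set $M:=\frac{1-R}{1+R}\in(0,1)$ and $K_R:=\sum_{m=0}^{\infty}R^mP_m$. Comparing the formula of Proposition \ref{pro:rmpm} with the case $m=0$ of Proposition \ref{pro:pm-expression} (in which $C_{0,j}=0$ for $j\ge 1$, so $P_0=C_{0,0}\mathcal I_{0,0}$), one reads off
\begin{equation*}
K_R(\mathbf y,\mathbf t)=\frac{M^{r}}{(1+R)^{n}}\,P_0(\mathbf y,M\mathbf t),\qquad \mathbf y\neq\mathbf 0 .
\end{equation*}
(Alternatively: resum $\sum_m R^mQ_m$ using $\sum_m R^mL_m^{(n-1)}(x)=(1-R)^{-n}e^{-Rx/(1-R)}$, which yields $\sum_m R^mQ_m(\mathbf y,\tau)=(1+R)^{-n}Q_0(\mathbf y,\tau/M)$ because $\mathcal B^{c\tau}=c\,\mathcal B^{\tau}$ for $c>0$, and then take the inverse partial Fourier transform in $\tau$; the absolute, locally uniform convergence here comes from $\sum_m R^m|L_m^{(n-1)}(x)|<\infty$ for $x\ge 0$, $|R|<1$.) By Theorem \ref{th:pm-ex}, $P_0$, hence $K_R$, extends to a continuous function on $\mathcal N\setminus\{\mathbf 0\}$ that is homogeneous of degree $-Q=-(2n+2r)$, and the displayed identity persists on all of $\mathcal N\setminus\{\mathbf 0\}$ by continuity.

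Next I would verify the two hypotheses of Lemma \ref{lem:esti} for $K_R$. For the mean value condition \eqref{mu-k-2}: since $\mathscr P_0$ is a tempered distribution homogeneous of degree $-Q$ (Proposition \ref{pm-homo}) whose restriction to $\mathcal N\setminus\{\mathbf 0\}$ is the continuous function $P_0$ (Proposition \ref{pro:pm-coincide-pm}), the converse half of Proposition \ref{expre-dis-2} gives $\mu_{P_0}=\int_{S^{r-1}}\int_{\Bbb R^{2n}}P_0=0$. Applying the homogeneity $P_0(\mathbf y,M\mathbf t)=M^{-(n+r)}P_0(M^{-1/2}\mathbf y,\mathbf t)$ together with the substitution $\mathbf y'=M^{-1/2}\mathbf y$ (so $d\mathbf y=M^{n}d\mathbf y'$), and noting $Q/2=n+r$, the same computation shows both
\begin{equation*}
\mu_{K_R}=\frac{M^{r}}{(1+R)^{n}}\,M^{-(n+r)}M^{n}\,\mu_{P_0}=0 ,
\end{equation*}
so that $\widehat{p.v.}\,K_R$ is a well-defined homogeneous distribution of degree $-Q$, and
\begin{equation*}
\int_{S^{r-1}}\int_{\Bbb R^{2n}}|K_R(\mathbf y,\mathbf t)|\,d\mathbf y\,d\mathbf t=\frac{M^{r}}{(1+R)^{n}}\,M^{-r}\,\|P_0\|_{L^1(S^{r-1}\times\Bbb R^{2n})}=\frac{\|P_0\|_{L^1(S^{r-1}\times\Bbb R^{2n})}}{(1+R)^{n}} ,
\end{equation*}
the cancellation of $M^{r}$ against $M^{-r}$ being exactly the arithmetic $Q/2=n+r$.

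It remains to note that $C_0:=\|P_0\|_{L^1(S^{r-1}\times\Bbb R^{2n})}$ is a finite, $R$-independent constant: on $\{|\mathbf y|\le 1\}\times S^{r-1}$ the function $P_0$ is continuous by Theorem \ref{th:pm-ex}, hence bounded, while for $|\mathbf y|\ge 1$ Lemma \ref{lem:B-y} gives $|P_0(\mathbf y,\mathbf t)|\lesssim|\mathbf y|^{-2(n+r)}$, which is integrable over $\{|\mathbf y|\ge 1\}\subset\Bbb R^{2n}$ since $2(n+r)>2n$; and $S^{r-1}$ has finite surface measure. Then Lemma \ref{lem:esti}, applied to $K_R$, gives that the operator norm of $\phi\mapsto\phi\ast\widehat{p.v.}\,K_R$ on $L^p$, $1<p<\infty$, is at most $c_p\,\|K_R\|_{L^1(S^{r-1}\times\Bbb R^{2n})}=\frac{c_pC_0}{(1+R)^{n}}$, which is the assertion with $c_p':=c_pC_0$.

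I expect no serious obstacle beyond bookkeeping. The two delicate points are (i) the termwise resummation of $\sum_m R^mQ_m$ and its interchange with the inverse Fourier transform, which is already handled in Proposition \ref{pro:rmpm}, and (ii) keeping the anisotropic dilation $\delta_\lambda(\mathbf y,\mathbf t)=(\lambda\mathbf y,\lambda^{2}\mathbf t)$ straight, so that the prefactor $M^{r}$ in $K_R=\frac{M^{r}}{(1+R)^{n}}P_0(\cdot,M\cdot)$ is precisely compensated by the $M$-scaling of $\int_{\Bbb R^{2n}}|P_0(\mathbf y,M\mathbf t)|\,d\mathbf y$. It is exactly the identity $K_R=\frac{M^{r}}{(1+R)^{n}}P_0(\cdot,M\cdot)$ that makes the $L^1(S^{r-1}\times\Bbb R^{2n})$-norm, and hence the operator norm, collapse to $(1+R)^{-n}$ times a constant independent of $R$.
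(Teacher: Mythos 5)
Your proof is correct, and its skeleton is the same as the paper's: both start from the closed form of Proposition \ref{pro:rmpm} and reduce the operator-norm bound, via Lemma \ref{lem:esti}, to an estimate of $\int_{S^{r-1}}\int_{\mathbb{R}^{2n}}\bigl|\sum_m R^mP_m(\mathbf{y},\mathbf{t})\bigr|\,d\mathbf{y}\,d\mathbf{t}$ by $C(1+R)^{-n}$. Where you genuinely diverge is in how that $L^1(S^{r-1}\times\mathbb{R}^{2n})$-norm is controlled. The paper pulls the absolute value inside the $\tau$-integral and bounds $I_1=\int_{\mathbb{R}^{2n}}\bigl|\langle\mathcal{B}^\tau\mathbf{y},\mathbf{y}\rangle-iM\,\mathbf{t}\cdot\tau\bigr|^{-(n+r)}d\mathbf{y}$ pointwise in $(\mathbf{t},\tau)$ by $C'M^{-r}$, using the lower bound $|\cdots|^2\gtrsim\rho^4+M^2$; note that this lower bound is delicate, since $|\cdots|^2=\langle\mathcal{B}^\tau\mathbf{y},\mathbf{y}\rangle^2+M^2(\mathbf{t}\cdot\tau)^2$ degenerates when $\mathbf{t}\cdot\tau\to0$, so the paper's pointwise estimate really only yields $(M|\mathbf{t}\cdot\tau|)^{-r}$, which is not integrable over $S^{r-1}\times S^{r-1}$. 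Your route sidesteps this entirely: by recognizing $\sum_mR^mP_m=\frac{M^r}{(1+R)^n}P_0(\cdot,M\cdot)$ (which is exactly what Proposition \ref{pro:rmpm} says, given $C_{0,j}=0$ for $j\geq1$), you transfer the whole estimate to the fixed, $R$-independent quantity $\|P_0\|_{L^1(S^{r-1}\times\mathbb{R}^{2n})}$ via the anisotropic homogeneity, and the finiteness of that quantity follows from the continuity of the extended $P_0$ on the compact set $\{|\mathbf{y}|\le1\}\times S^{r-1}$ (Theorem \ref{th:pm-ex}) together with the Calder\'on--Zygmund size bound $|P_0(\mathbf{g})|\lesssim\|\mathbf{g}\|^{-Q}$ for $|\mathbf{y}|\ge1$. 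The bookkeeping $M^{r}\cdot M^{-(n+r)}\cdot M^{n}=1$ and the verification of $\mu_{K_R}=0$ from $\mu_{P_0}=0$ (via the converse half of Proposition \ref{expre-dis-2}) are both correct and match the scaling identity \eqref{homo-r}. In short: same two main ingredients, but your treatment of the $L^1$ bound is cleaner and more robust than the paper's direct integral estimate.
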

\begin{proof} Since $\sum_{m=0}^{+\infty} R^{m}P_m(\mathbf{y}, \mathbf{t})$ is also homogeneous of degree $-Q$ and satisfies  \eqref{mu-k-2} with $K=\sum_{m=0}^{+\infty} R^{m}P_m$,  then  the operator norm of $\widehat{p.v.}\sum_{m=0}^{\infty}R^mP_{m} $ on $L^{p}$ is bounded by $$ c_p\int_{\mathbb{R}^{2n}}\int_{S^{r-1}}\bigg|\sum_{m=0}^{+\infty} R^{m}P_m(\mathbf{y}, \mathbf{t})\bigg| d\mathbf{y}d\mathbf{t},$$  by Lemma \ref{lem:esti}.
Let  $M:=\frac{1-R}{R+1}$. Note that
\begin{equation*}
\begin{aligned}
&\int_{\mathbb{R}^{2n}}\int_{S^{r-1}}\bigg| \int_{S^{r-1}}\frac{(\det\mathcal{B}^\tau)^{\frac{1}{2}}}
{\bigg[\langle \mathcal{B}^\tau\mathbf{y},\mathbf{y}\rangle-iM\mathbf{t}\cdot \tau\bigg]^{n+r}}
d\tau\bigg|d\mathbf{y}d\mathbf{t}
\leq  \int_{S^{r-1}} \int_{S^{r-1}}(\det\mathcal{B}^\tau)^{\frac{1}{2}} I_1d\tau d\mathbf{t},
\end{aligned}
\end{equation*}
where
\begin{equation*}
\begin{aligned}
I_1:=\int_{\Bbb R^{2n}}\frac{1}
{\bigg|\langle \mathcal{B}^\tau\mathbf{y},\mathbf{y}\rangle-iM \mathbf{t} \cdot \tau\bigg|^{n+r}}
d\mathbf{y}.
\end{aligned}
\end{equation*}
By Lemma \ref{lem:B-y}, there exists a constant $C>0$ such that
\begin{equation*}
\begin{aligned}
I_1 \lesssim \int_{0}^{\infty}\frac{\rho^{2n-1}d\rho}{(C\rho^{4}+M^2)^{\frac{n+r}{2}}}
=C^\prime\int_{0}^{\infty}\frac{ u^{n-1}du}{(u^{2}+M^2)^{\frac{n+r}{2}}}=\frac{C^\prime}{M^r},
\end{aligned}
\end{equation*}
where we take the coordinate transform $\sqrt{C}\rho^{2}=u$ in the first identity. It follows from Proposition \ref{pro:rmpm} that
\begin{equation*}
\begin{aligned}
\int_{\mathbb{R}^{2n}}\int_{S^{r-1}}\bigg|\sum_{m=0}^{+\infty} R^{m}P_m(\mathbf{y}, \mathbf{t})\bigg| d\mathbf{y}d\mathbf{t}
\leq&\frac{C^\prime}{(R+1)^n}.
\end{aligned}
\end{equation*}
The proposition is proved.
\end{proof}

\begin{proof}[Proof of Theorem \ref{thm:f-decom}.]
In \cite[Theorem 4.1]{CMW}, for $\phi\in \mathcal{S}(\mathcal{N})$,  it is known  that
\begin{equation*}\label{eq:app}
\sum_{m=0}^{\infty}R^{m}\mathbb{P}_{m}\phi\rightarrow \phi\qquad     {\rm as }\quad R\rightarrow 1^-.
\end{equation*}
 By Theorem \ref{thm:con-Pm-bounded},
$ \mathbb{P}_{m}\phi \in L^p(\mathcal{N})$ for any $\phi \in L^p(\mathcal{N})$.
 On the other hand,  the  operator norm of convolution operator  $\widehat{p.v.} \sum_{m=0}^{\infty}R^mP_{m}$  is uniformly bounded for $0<R<1$ by Proposition \ref{pro:esti-PR}. Therefore,
 $\sum_{m=0}^{\infty} R^{m}\mathbb{P}_{m}\phi$ is also uniformly bounded for $0<R<1$ by using identity \eqref{def-con-pm-2}. Consequently,
\begin{equation*}\label{Pf--f}
\lim _{R \rightarrow 1^-} \sum_{m=0}^{\infty} R^{m}\mathbb{P}_{m}\phi =\phi
\end{equation*}
 holds for all $\phi \in L^p(\mathcal{N})$. \end{proof}

\begin{proof}[Proof of Corollary \ref{cor:f-decom}.] It follows from Theorem \ref{pf=f+pvf} that $\mathbb{P}_m$  is self adjoint since
\begin{equation*}\begin{aligned}
\langle  \psi,\,\, \phi\ast p.v.P_m \rangle =&\lim_{\varepsilon\rightarrow 0}\int_{\|(\mathbf{y},\mathbf{t})^{-1}(\mathbf{x},\mathbf{s})\|\geq\varepsilon} \psi(\mathbf{x},\mathbf{s})\overline{P_{m}((\mathbf{y},\mathbf{t})^{-1}(\mathbf{x},\mathbf{s}))\phi(\mathbf{y},\mathbf{t})}
d\mathbf{y}d\mathbf{t}d\mathbf{x}d\mathbf{s}\\
=&\lim_{\varepsilon\rightarrow 0}\int_{\|(\mathbf{y},\mathbf{t})^{-1}(\mathbf{x},\mathbf{s})\|\geq\varepsilon} P_{m}((\mathbf{x},\mathbf{s})^{-1}(\mathbf{y},\mathbf{t}))\psi(\mathbf{x},\mathbf{s})\overline{\phi(\mathbf{y},\mathbf{t})}
d\mathbf{y}d\mathbf{t}d\mathbf{x}d\mathbf{s}\\
=&\langle  \psi\ast p.v.P_m,\phi \rangle,
\end{aligned}\end{equation*}
for $\phi, \psi \in C_0^\infty(\mathcal{N})$, where the second identity holds since  the kernel $P_m$ satisfies \eqref{p-y-t=pyt}, and the constant $C$ in \eqref{eq:pf=f+pvf} is a real number, which can be calculated as \cite[P. 364-365]{S} in the case of the Heisenberg group.
Therefore,
\begin{equation}\label{pm-pm}
\langle \mathbb{P}_m \phi,\,\, \mathbb{P}_{m^{\prime}}\phi \rangle =\langle \mathbb{P}_{m^{\prime}} \mathbb{P}_{m} \phi,\,\, \phi\rangle
=\langle \delta_{m}^{m^{\prime}}\mathbb{P}_{m^{\prime}}\phi, \,\,\phi\rangle=\delta_{m}^{m^{\prime}}\|\mathbb{P}_{m^{\prime}} \phi\|_{L^2}^2,
\end{equation}
by Theorem \ref{thm:con-Pm-bounded}.  Hence,
\begin{equation*}\label{L2-inner}
\sum_{m=0}^{\infty}R^{2m}\left\|\mathbb{P}_m \phi\right\|_{L^2}^2=\left\|\sum_{m=0}^{\infty}R^m\mathbb{P}_m \phi\right\|_{L^2}^2 \longrightarrow \left\| \phi\right\|_{L^2}^2,
\end{equation*}
by Theorem \ref{thm:f-decom}. Consequently, we get
\begin{equation*}
\sum_{m=0}^{\infty}\left\|\mathbb{P}_m \phi\right\|_{L^2}^2 \geq \left\| \phi\right\|_{L^2}^2.
\end{equation*}

On the other hand,
\begin{equation*}
\sum_{m=0}^{N}\left\|\mathbb{P}_m \phi\right\|_{L^2}^2 =\left\|\sum_{m=0}^{N}\mathbb{P}_m \phi\right\|_{L^2}^2 \leq\left\| \phi\right\|_{L^2}^2,
\end{equation*}
by \eqref{pm-pm}, since $\sum_{m=0}^{N}\mathbb{P}_m$ is also a projection operator.  Letting $N\rightarrow \infty$, we get
\begin{equation*}
\sum_{m=0}^{\infty}\left\|\mathbb{P}_m \phi\right\|_{L^2}^2  \leq\left\| \phi\right\|_{L^2}^2.
\end{equation*}
Therefore we must have
\begin{equation*}
\sum_{m=0}^{\infty}\left\|\mathbb{P}_m \phi\right\|_{L^2}^2  =\left\| \phi\right\|_{L^2}^2.
\end{equation*}
Hence, \eqref{L2-decom} holds in $L^2$. The corollary is proved.
\end{proof}

\end{document}